\newtheorem{Def}{Definition}[section]
\newtheorem{lemma}[Def]{Lemma}
\newtheorem{prop}[Def]{Proposition}
\newtheorem{thm}[Def]{Theorem}
\newtheorem{ex}[Def]{Example}
\newtheorem{rmk}[Def]{Remark}
\newcommand{\transv}{\mathrel{\text{\tpitchfork}}}
\newcommand{\tpitchfork}{%
  \vbox{
    \baselineskip\z@skip
    \lineskip-.52ex
    \lineskiplimit\maxdimen
    \m@th
    \ialign{##\crcr\hidewidth\smash{$-$}\hidewidth\crcr$\pitchfork$\crcr}
  }%
}
\begin{document}
\title[Diagrammatic characterization of spatial surfaces]{A diagrammatic presentation and its characterization\\of non-split compact surfaces in the 3-sphere}
\author[S.~Matsuzaki]{Shosaku Matsuzaki}
\address[S.~Matsuzaki]{Liberal Arts Education Center, Ashikaga University, 268-1 Ohmae-cho, Ashikaga-shi, Tochigi 326-8558, Japan}
\email{matsuzaki.shosaku@g.ashikaga.ac.jp}
\keywords{surfaces in the 3-sphere; spatial surfaces; Reidemeister moves; codimension one embeddings}
\subjclass[2020]{57K12, 57K10, 57N35}

\maketitle

\begin{abstract}
We give a presentation for a non-split compact surface embedded in the 3-sphere $S^3$ by using diagrams of spatial trivalent graphs equipped with signs and we define  Reidemeister moves for such signed diagrams.
We show that two diagrams of embedded surfaces are related by Reidemeister moves if and only if the surfaces represented by the diagrams are ambient isotopic in $S^3$.
\end{abstract}

\section{Introduction}\label{SEC:introduction}
A fundamental problem in Knot Theory is to classify knots and links up to ambient isotopy in $S^3$.
Two knots are equivalent if and only if diagrams of them are related by Reidemeister moves~\cite{r1927}:
Reidemeister moves characterize a combinatorial structure of knots.
Diagrammatic characterizations for spatial graphs, handlebody-knots and surface-knots are also known, where a spatial graph is a graph in $S^3$, a handlebody-knot is a handlebody in $S^3$, and a surface-knot is a closed surface in $S^4$ (cf.~\cite{ishii2008}, \cite{kauffman1989}, \cite{roseman1998}).
We often use invariants to distinguish the above knots.
Many invariants have been discovered on the basis of diagrammatic characterizations.
In this paper, we consider presentation of a compact surface embedded in $S^3$, which we call a spatial surface.
For a knot or link, we immediately obtain its diagram by perturbing the z-axis of projection slightly.
For a spatial surface, however, perturbing the spatial surface is not enough to present it in a useful form: it may be overlapped and folded complexly by multiple layers in the direction of the z-axis of $\mathbb R^3\subset S^3$.
We will give a diagram for spatial surfaces by using a trivalent spine equipped with information of twisted bands. 
\par
If any component of a spatial surface has non-empty boundary, we take a trivalent spine of the surface and take a thin regular neighborhood of the spine;
a regular neighborhood of a spine is equivalent to the original spatial surface.
In stead of the original surface,
we consider the regular neighborhood by using a spatial trivalent graph diagram equipped with information of twisted bands.
In Section~\ref{SEC:spatial_surface}, we give a characterization for spatial surfaces with boundary (Theorems~\ref{THM:main_non_oriented} and \ref{THM:main_oriented}).
In the next paper~\cite{IMM}, we constructed a coloring invariant of oriented spatial surfaces by using Theorem~\ref{THM:main_oriented}, and distinguished some pairs of oriented spatial surfaces.
The proofs of Theorems~\ref{THM:main_non_oriented} and \ref{THM:main_oriented} are written in Section~\ref{SEC:proof}; we need delicate consideration to avoid difficulty and complexity about information of twisted bands in spatial surfaces.
In the process of showing main theorems, we will give a characterization of trivalent spines of surfaces (Theorem~\ref{THM:Luo}).
\par
When we consider a non-split spatial surface that has closed components, we remove an open disk from each closed component of it;
then, we have a spatial surface with boundary.
The spatial surface with boundary loses no information up to ambient isotopy after removing an open disk (Proposition~\ref{PROP:many_hole_reconstruct_unique}).
Therefore, it is sufficient to consider a spatial surface that has boundary when we consider non-split spatial surfaces.
We can see some studies for closed surfaces in $S^3$
in~\cite{homma1954}, \cite{suzuki1974} and \cite{tsukui1970}.
Homma defined an unknotted polygon, which is a non-splittable loop of a closed surface embedded in $S^3$, and showed that every closed surface in $S^3$ has an unknotted polygon in~\cite{homma1954}.
On the base of this fact, Suzuki defined a complexity for a closed surface embedded in 3-manifold and studied it in~\cite{suzuki1974}.
Tsukui showed the uniqueness of decompositions for closed genus 2 surfaces in $\mathbb R^3$ in~\cite{tsukui1970}.
In those studies, however, we directly deal with closed surfaces without using something like a diagram.
We expect developments of those studies by using diagrams of spatial surfaces.
On the other hand, we can regard a knot, link or handlebody-knot as a spatial closed surface.
This suggests that we can systematically study knots, links and handlebody-knots by the new framework of spatial surfaces (Section~\ref{SEC:non_split_spatial_surface}).
We have a new diagrammatic characterization of knots (Theorem~\ref{THM_knot_new_Reidemeister_move}).
\par
Afterwards I knew that Theorem~\ref{THM:main_non_oriented} is a corollary of Proposition~1.3.8 in~\cite{bp2012}.
In the former part of the paper, the authors study a ribbon surface, which is a compact surface $F$ with non-empty boundary embedded in the 4-ball $B^4$ such that $\partial F$ is contained in $\partial B^4=S^3$.
Proposition~1.3.8 is proved on the Morse theory, while Theorem~\ref{THM:main_non_oriented} in this paper is shown by using a 3-dimensional way.

\section{An IH-move for trivalent spines on a surface with boundary}\label{SEC:Luo}
We prepare some notations used throughout this paper.
We denote by $\operatorname{\#}S$ the cardinality of a set $S$. 
We denote by $\mbox{cl}_X(U)$ the closure of a subset $U$ of a topological space $X$.
We denote by $\partial M$ and $\operatorname{int}M$ the boundary and the interior of a topological manifold $M$, respectively.
\par
We assume that a graph is finite, which has finite edges and vertices.
A graph is {\it trivalent} if every vertex of it is trivalent.
A trivalent graph may have a connected component that has no vertices, that is, a trivalent graph may have a circle component.
A {\it surface with boundary} is a compact surface such that every component of it has a non-empty boundary.
For a surface $F$ with boundary, a graph $G$ in $F\setminus N_{\partial F}$ is a {\it spine} of $F$ if $N_G$ and $\operatorname{cl}_F(F\setminus N_{\partial F})$ are ambient isotopic in $F$, where $N_G$ and $N_{\partial F}$ mean regular neighborhoods of $G$ and $\partial F$ in $F$, respectively.
In this section, we suppose that a surface with boundary has exactly one component.
\par
A disk has no trivalent spines, and an annulus or a M\"{o}bius band has exactly one trivalent spine up to ambient isotopy, which is a circle.
Here we remember that a circle is regarded as a trivalent graph.
However, other surfaces with boundary have infinitely many trivalent spines up to ambient isotopy.
Theorem~\ref{THM:Luo} claims that these spines are related by finitely many {\it IH-moves}, see Fig.~\ref{FIG:definition_IH_move}.
Our goal in Section~\ref{SEC:Luo} is to prove Theorem~\ref{THM:Luo}, which gives a characterization of trivalent spines of a surface with boundary.
		\begin{figure}[htbp]\centerline{
		\includegraphics{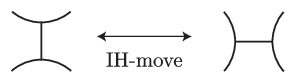}}
		\caption{An IH-move: a local replacement of a trivalent spine in a surface with boundary.}
		\label{FIG:definition_IH_move}
		\end{figure}

\begin{thm}\label{THM:Luo}
Two trivalent spines of a surface with boundary are related by finitely many IH-moves and isotopies.
\end{thm}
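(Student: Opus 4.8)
The statement is really intrinsic to the abstract surface $F$, so the plan is to forget the embedding at first. Indeed, whether a graph $G\subset F$ is a trivalent spine, and whether a local modification is an IH-move, depend only on $F$; and any isotopy of a spine inside the embedded surface extends to an ambient isotopy of $S^3$ supported near $F$. So I would reduce to the purely intrinsic claim that any two trivalent spines of $F$ are related by IH-moves and isotopies of $F$. First I would record the elementary Euler-characteristic count: a trivalent spine with $V$ vertices and $E$ edges satisfies $3V=2E$ and $V-E=\chi(F)$, hence $V=-2\chi(F)$ and $E=-3\chi(F)$; in particular an IH-move preserves $V$ and $E$, and spines with vertices exist only when $\chi(F)<0$. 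The cases $\chi(F)\ge 0$ are the base cases already noted in the text: the disk has no trivalent spine, while the annulus and the M\"obius band have a single circle spine carrying no vertex, so there the assertion is vacuous. Thus it remains to treat $\chi(F)\le -1$.

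For the main case the plan is to pass to the well-understood world of ideal triangulations by a duality. Cap each boundary circle of $F$ with a disk carrying one interior marked point to obtain a closed surface $\widehat F$ with a finite set of punctures, one per boundary component of $F$. Given a trivalent spine $G$, the hypothesis that $G$ is a spine means that $F\smallsetminus N_G$ is a collar of $\partial F$, so each complementary region of $G$ in $\widehat F$ is a disk containing exactly one puncture. I would then form the Poincar\'e-type dual $T_G$: put an ideal vertex at each puncture and one triangle at each trivalent vertex of $G$, with the three sides of the triangle dual to the three edges meeting that vertex. This produces an ideal triangulation $T_G$ of $\widehat F$ whose vertices are exactly the punctures, in which the trivalent vertices of $G$ correspond to triangles, the edges of $G$ to edges of $T_G$, and the complementary regions of $G$ to punctures. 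Conversely every ideal triangulation arises as some $T_G$, so $G\mapsto T_G$ is a bijection up to isotopy. The key local observation is that an IH-move on the edge of $G$ joining two trivalent vertices corresponds exactly to a diagonal flip of the dual edge of $T_G$, that is, to replacing one diagonal of the ambient quadrilateral by the other.

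With the dictionary in place I would invoke the classical connectivity of the flip graph: any two ideal triangulations of a punctured surface with the same vertex set are joined by a finite sequence of diagonal flips. Translating this back through the duality yields a finite sequence of IH-moves carrying $T_{G_0}$ to $T_{G_1}$, hence carrying $G_0$ to $G_1$ up to isotopy; the isotopies absorb both the non-canonical choices made in the capping and dualizing construction and the ambient isotopy of spines in $F$. This proves the theorem for $\chi(F)\le -1$, and together with the base cases completes the argument.

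The hard part is the duality bookkeeping, not the flip theorem. I expect the main obstacle to be verifying that the complementary regions are genuinely once-punctured disks and that the IH$\leftrightarrow$flip correspondence survives the degenerate configurations: flips where the two adjacent triangles coincide or where an edge is glued to itself, and the surfaces of smallest complexity (the once-punctured torus and the thrice- and four-punctured spheres), where triangulations are non-generic and must be checked by hand. If one prefers a self-contained argument that avoids citing the flip theorem, the same duality reduces the problem to an induction on $-\chi(F)$: cut $\widehat F$ along a nonseparating dual edge to lower the complexity, reduce an arbitrary pair of spines to a common one by IH-moves, and treat the base case $\chi(F)=-1$ directly, where $V=2$ and only finitely many spines need be compared. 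The bookkeeping of this reduction is where essentially all of the work lies.
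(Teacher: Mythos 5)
Your outline is correct and, perhaps unexpectedly, its first half coincides with the paper's: the paper's notion of a \emph{marking} (Definition \ref{marking}) is precisely your dual arc system, with each proper arc dual to an edge of the spine and each complementary hexagonal disk dual to a trivalent vertex, and a \emph{turning} (Figure \ref{FIG:definition_turning}) is exactly your diagonal flip; the non-turnable arcs, those with $D_M(m)$ an annulus or M\"obius band (Figure \ref{FIG:three_cases_configuration}), are exactly your non-flippable edges. The divergence is in the second half. Where you cap off the boundary, pass to ideal triangulations of the punctured surface, and cite connectivity of the flip graph, the paper proves the corresponding connectivity statement from scratch (Theorem \ref{THM:marking_rephrase}): given two markings $L$ and $M$, it puts them in taut position and runs an endarc/innermost argument (Lemmas \ref{LEM:taut_then_turnable}, \ref{LEM:zero_crossing_marking} and \ref{LEM:marking_finitely_many_sequence}) that strictly decreases the intersection complexity $w_L(M)$ by finitely many turnings until the markings are parallel. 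Your route is shorter on paper, but it buys that brevity only if the flip theorem is available in the generality actually needed here: Theorem \ref{THM:Luo} covers non-orientable surfaces (the M\"obius-band configuration is not a curiosity but a case the paper must and does handle), whereas the standard statements and proofs of flip-graph connectivity are usually given for orientable punctured surfaces, so you would need to either locate or adapt a non-orientable version; you should also confirm the theorem in the form that allows the degenerate flips you mention. The paper's argument is self-contained, works uniformly in the non-orientable case, and is in effect a direct proof of the flip theorem in the dual language of markings. Your fallback self-contained induction on $-\chi(F)$ is only sketched, and as you say yourself, essentially all of the work would lie in carrying it out; the paper's taut-position argument is one concrete way of doing exactly that work.
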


An arc $m$ embedded in a surface $F$ with boundary is {\it proper} if $m \cap \partial F=\partial m$.

\begin{Def}
\label{marking}{\rm
Let $F$ and $M$ be a surface with boundary and a disjoint union of proper arcs in $F$, respectively.
Let $N_{M}$ be a regular neighborhood of $M$ in $F$.
The disjoint union $M$ is a {\it marking} on $F$ if $D$ is a disk, and the disjoint union $D \cap N_M$ of proper arcs in $F$ consists of exactly three arcs for any connected component $D$ of the closure $\operatorname{cl}_F(F\setminus N_M)$, see Fig.~\ref{FIG:definition_marking}.
}\end{Def}
		\begin{figure}[htbp]\centerline{
		\includegraphics{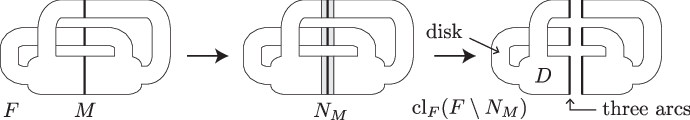}}
		\caption{A marking $M$ on the closure $F$ of a torus minus a disk.}
		\label{FIG:definition_marking}
		\end{figure}

Although a marking $M$ on a surface with boundary is a subset of the surface, we denote by $\mathcal M$ the set of connected components of $M$; then, an element of $\mathcal M$ is a proper arc in $F$.
For a surface $F$ with boundary, there exists a marking on $F$ if and only if $F$ is not a disk, annulus or M\"{o}bius band.
(Other examples of markings are depicted in Fig.~\ref{FIG:example_marking}.)

\begin{rmk}
\label{RMK:marking_IH_move}{\rm
If a marking on a surface $F$ with boundary is given, we can construct exactly one trivalent spine of $F$ up to ambient isotopy.
Conversely, if a trivalent spine of $F$ is given, we can construct exactly one marking for $F$ up to ambient isotopy.
}\end{rmk}

\begin{lemma}\label{LEM:marking_has_same_number_arc}
For markings $L$ and $M$ on a surface $F$ with boundary, it holds that $\#\mathcal L=\#\mathcal M$.
\end{lemma}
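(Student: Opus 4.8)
The plan is to show that the number of arcs in \emph{any} marking on $F$ equals $-3\chi(F)$, a quantity depending only on $F$; the desired equality $\#\mathcal{L}=\#\mathcal{M}$ is then immediate. So it suffices to prove $\#\mathcal{M}=-3\chi(F)$ for an arbitrary marking $M$ on $F$.

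First I would build a decomposition of $F$ from $M$. Set $a=\#\mathcal{M}$ and let $N_M$ be a regular neighborhood of $M$, so that $N_M$ is a disjoint union of $a$ bands, one per arc of $M$; each band is a disk, whence $\chi(N_M)=a$. Let $B=\operatorname{cl}_F(F\setminus N_M)$ and let $f$ be its number of connected components; by Definition \ref{marking} each component is a disk, so $\chi(B)=f$. The overlap $N_M\cap B$ is the part of $\partial N_M$ interior to $F$, namely the two ``long sides'' of each band, a disjoint union of $2a$ arcs, so $\chi(N_M\cap B)=2a$.

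From additivity of the Euler characteristic for the cover $F=N_M\cup B$ I obtain
\[
\chi(F)=\chi(N_M)+\chi(B)-\chi(N_M\cap B)=a+f-2a=f-a .
\]
A second relation comes from counting the long sides: each is a boundary arc of exactly one disk of $B$, so counting band-by-band gives $2a$ while counting disk-by-disk gives $3f$, because the marking condition forces each disk of $B$ to meet $N_M$ in exactly three arcs; hence $2a=3f$. Substituting $f=\tfrac{2}{3}a$ into $\chi(F)=f-a$ gives $\chi(F)=-\tfrac{1}{3}a$, that is, $\#\mathcal{M}=a=-3\chi(F)$.

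The step I expect to require the most care is the bookkeeping near $\partial F$: checking that $N_M\cap B$ is exactly the disjoint union of the $2a$ long sides (the band ends lie in $\partial F$ and contribute nothing), that these arcs are genuinely pairwise disjoint so that their Euler characteristic really is $2a$, and that each long side borders a single disk of $B$ so that the count $2a=3f$ is unambiguous. Once this local band-and-disk picture is secured, both identities follow at once and the conclusion is purely arithmetic.
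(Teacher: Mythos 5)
Your proof is correct and takes essentially the same route as the paper's: both establish the two relations $2\,\#\mathcal M=3f$ (three arcs of $N_M$ per complementary disk, two sides per band) and $\chi(F)=f-\#\mathcal M$, and solve to get $\#\mathcal M=-3\chi(F)$. The only difference is cosmetic --- the paper justifies the second relation by observing that $F$ is homotopy equivalent to a graph with $f$ vertices and $\#\mathcal M$ edges, whereas you obtain it from additivity of $\chi$ over the decomposition $F=N_M\cup B$.
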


\begin{proof}
By the definition of a marking, we have the equality $2(\#\mathcal M)=3|F\setminus M|$ immediately, where $|F\setminus M|$ means the number of connected components of the topological space $F\setminus M$.
We have $\chi(F)=|F\setminus M|-(\#\mathcal M)$ since $F$ is homotopy equivalent to a graph that has $|F\setminus M|$ vertices and $\#\mathcal M$ edges, where $\chi(F)$ is the Euler characteristic of $F$.
Hence, the equality $\#\mathcal M=-3\chi(F)$ holds.
Therefore, the number of arcs in a marking is determined by the Euler characteristic of surfaces.
\end{proof}

Let $M$ be a marking on a surface $F$ with boundary.
For an arc $m\in\mathcal M$, we denote by $D_M(m)$ the connected component of $F\setminus(M\setminus m)$ that contains $m$.
All configurations of $D_M(m)$ are illustrated in Fig.~\ref{FIG:three_cases_configuration}:
the interior $\operatorname{int}D_M(m)$ is an open disk, open annulus, or open M\"{o}bius band.
		\begin{figure}[htbp]\centerline{
		\includegraphics{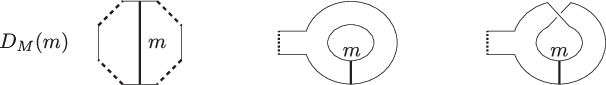}}
		\caption{All probable configurations of $D_M(m)$.}
		\label{FIG:three_cases_configuration}
		\end{figure}
		
An arc $m \in \mathcal M$ is {\it turnable} if $\operatorname{int}D_M(m)$ is an open disk.
For an arc $m \in\mathcal M$ and an arbitrary proper arc $m^*\subset F\setminus(M\setminus m)$,
we write $M(m,m^*)=(M\setminus m)\sqcup m^*$.
Suppose that $m\in\mathcal M$ is turnable.
For a proper arc $m^*\subset F$ as illustrated in Fig.~\ref{FIG:definition_turning}, $M(m,m^*)$ is also a marking on $F$;
$M(m,m^*)$ is said to be obtained from $M$ by {\it turning} $m$ into $m^*$,
and $m$ is {\it turned} into $m^*$.
		\begin{figure}[htbp]\centerline{
		\includegraphics{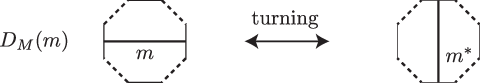}}
		\caption{Turning a turnable arc.}
		\label{FIG:definition_turning}
		\end{figure}
		
\begin{ex}\label{Ex:marking_and_turnable_arcs}{\rm
Let $F$ be a surface with boundary and let $L$ and $M$ be markings on $F$ as illustrated in Fig.~\ref{FIG:example_marking}: $\#\mathcal L=\#\mathcal M=6$.
All arcs in $\mathcal L$ are turnable.
The arc $m\in\mathcal M$ in Fig.~\ref{FIG:example_marking} is not turnable, because $\operatorname{int}D_M(m)$ is an open annulus.
The other arcs in $\mathcal M$ are turnable.
Each trivalent graph in the figure is a trivalent spine of $F$ corresponding to each marking. 
}\end{ex}

		\begin{figure}[htbp]\centerline{
		\includegraphics{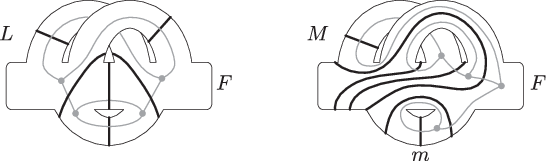}}
		\caption{Markings are colored by black and the corresponding spines are colored by gray.}\label{FIG:example_marking}
		\end{figure}

\begin{rmk}\label{RMK:IH_revolving}{\rm 
Turning an arc corresponds to applying an IH-move of a trivalent spine.
}\end{rmk}
		
		\begin{figure}[htbp]\centerline{
		\includegraphics{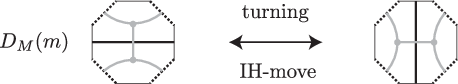}}
		\caption{Turning a turnable arc corresponds to applying an IH-move.}
		\label{FIG:revolving_IH}
		\end{figure}

It can be seen that Theorem~\ref{THM:Luo} follows from Theorem~\ref{THM:marking_rephrase} below by considering Remarks~\ref{RMK:marking_IH_move} and \ref{RMK:IH_revolving}.
We will show Theorem~\ref{THM:marking_rephrase} instead of proving Theorem~\ref{THM:Luo}, in this section.

\begin{thm}\label{THM:marking_rephrase}
If $L$ and $M$ are markings on a surface $F$ with boundary, then there exists a finite sequence $M_0,M_1,\ldots,M_n$ of markings on $F$ such that
\begin{itemize}
\item $M$ (resp.~$L$) and $M_0$ (resp.~$M_n$) are ambient isotopic in $F$, and
\item $M_{i}$ and $M_{i+1}$ are related by turning an arc for any $i$ with $0\le i<n$.
\end{itemize}
\end{thm}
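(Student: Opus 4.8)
The plan is to prove Theorem~\ref{THM:marking_rephrase} by a double induction: an outer induction on the complexity of the surface, measured by the common number $\#\mathcal M = -3\chi(F)$ of arcs in a marking (this is legitimate by Lemma~\ref{LEM:marking_has_same_number_arc}), and an inner induction on the geometric intersection number $\#(L\cap M)$, i.e.\ the number of transverse intersection points of the two markings. First I would fix $L$ and ambient isotope $M$ so that $M$ meets $L$ transversely with no triple points and so that $\#(L\cap M)$ is minimal over the isotopy class; in particular no subarc of $L$ together with a subarc of $M$ cuts off a bigon. The guiding principle throughout is that turning a turnable arc $m$ into $m^*$ is precisely the diagonal exchange across the disk $\operatorname{int}D_M(m)$ (see Figure~\ref{FIG:definition_turning} and Remark~\ref{RMK:IH_revolving}), so the whole argument is a flip-reduction in the language of markings.

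For the inner step, suppose $\#(L\cap M)>0$. I would consider the decomposition of $F$ into the complementary disks $\operatorname{cl}_F(F\setminus N_M)$, each a hexagon whose boundary alternates between three arcs on $\partial N_M$ and three arcs on $\partial F$. Each component of $L$ enters such a hexagon across its $\partial N_M$-sides, and minimality of $\#(L\cap M)$ forbids a subarc of $L$ from returning to the side through which it entered. An innermost such subarc cuts off, together with a piece of $\partial F$, a sub-disk containing no other subarc of $L$ and meeting $M$ in a single marking arc $m$; turning $m$ then slides it across this innermost subarc and strictly decreases $\#(L\cap M)$. The three configurations of Figure~\ref{FIG:three_cases_configuration} enter here: I must check that the arc exposed by the innermost analysis is indeed turnable (its $D_M$ is a disk), and, in the cases where $\operatorname{int}D_M(m)$ is instead an open annulus or open M\"obius band, that an auxiliary turning of a neighboring arc first produces a turnable reducing move.

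Once the inner induction terminates we reach $\#(L\cap M)=0$, so $L$ and $M$ are disjoint and each arc of $L$ lies in a single complementary disk $D$ of $M$. Being part of a marking, such an arc is essential, and an essential proper arc in the hexagon $D$ separates one $\partial N_M$-side from the remaining two; hence it is parallel, across a sub-disk of $D$ that meets $M$ in only one arc $m$, to $m$ itself. I would exploit this parallelism to make, after finitely many further turnings among the remaining arcs, an arc of $M$ coincide up to ambient isotopy with an arc $\ell$ of $L$. Cutting $F$ along this shared arc $\ell$ yields a surface $F'$ with $-\chi(F')<-\chi(F)$ carrying the induced markings $L\setminus\ell$ and $M\setminus\ell$, which agree along the new boundary; the outer induction hypothesis connects them by turnings on $F'$, and each such turning is disjoint from $\ell$ and so lifts to a turning on $F$, completing the induction.

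The hard part will be the inner reduction step, and specifically guaranteeing that an intersection-reducing turning can always be carried out on a genuinely \emph{turnable} arc. The non-turnable configurations of Figure~\ref{FIG:three_cases_configuration}, where $\operatorname{int}D_M(m)$ is an annulus or a M\"obius band (reflecting a loop-edge of the dual spine), are exactly where a naive diagonal exchange is unavailable; the delicate point—foreshadowed in the introduction as requiring care, and sharpest in the non-orientable case—is to show that one can always rearrange by auxiliary turnings to expose a turnable arc whose turning strictly lowers $\#(L\cap M)$. A secondary but necessary piece of bookkeeping is the cut-and-induct step: verifying that $M\setminus\ell$ is again a marking of $F'$, so that the trivalence/disk condition of Definition~\ref{marking} survives cutting along $\ell$, and that turnings on $F'$ correspond to turnings on $F$ fixing $\ell$.
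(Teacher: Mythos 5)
There are two genuine gaps. The first is in your inner induction: you assert that an innermost subarc of $L$ exposes an arc $m$ of $M$ whose turning ``strictly decreases $\#(L\cap M)$.'' This is false in general, and it is exactly the difficulty the paper's proof is built around. If the arc $\ell$ of $L$ carrying the innermost endarc meets $M$ more than once, the turned arc $m^*$ cannot be taken disjoint from $L$: it must run alongside $\ell$ up to the next arc $m_1$ of $M$ that $\ell$ crosses and then along $m_1$, so $m^*$ may acquire up to $\#(m_1\cap\ell)-1$ new intersections with $\ell$, while the old arc $m$ may have met $\ell$ only once. The total geometric intersection number can therefore increase under a single turning, and your induction measure does not decrease. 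The paper (Lemma \ref{LEM:marking_finitely_many_sequence}, Case 2) copes with this by taking as primary complexity the coarser quantity $\#\mathcal W_L(M)$, the number of \emph{arcs} of $M$ meeting $L$ (minimized over isotopy), which is non-increasing along a whole chain of turnings pushed along $\ell$, and proves termination of that chain with a secondary multiset of intersection counts. You correctly flag this as ``the hard part,'' but the proposal supplies no mechanism for it; note also that Lemma \ref{LEM:taut_then_turnable} already guarantees that in taut position the arc hit by an endarc is turnable, so the annulus/M\"{o}bius configurations you worry about do not occur there --- the real obstruction is the failure of monotonicity, not turnability.

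The second gap is the outer induction. Cutting $F$ along a proper arc raises the Euler characteristic by one, so by Lemma \ref{LEM:marking_has_same_number_arc} a marking of the cut surface $F'$ must have $-3\chi(F')=\#\mathcal M-3$ arcs; but $M$ minus the arc identified with $\ell$ has $\#\mathcal M-1$ arcs, so it is not a marking of $F'$ (concretely, the complementary hexagons adjacent to $\ell$ now meet the regular neighborhood of the remaining arcs in only two arcs instead of three). The cut-and-induct step therefore does not go through as stated. It is also unnecessary: once $\#(L\cap M)=0$, the marking conditions already force each component of $N_L$ to contain exactly one arc of $M$, parallel to the corresponding arc of $L$, so $L$ and $M$ are ambient isotopic outright --- this is the paper's Lemma \ref{LEM:zero_crossing_marking} --- and no recursion on the surface is needed.
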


We prepare some notations that are needed for the proof of Theorem~\ref{THM:marking_rephrase}.	
Let $L$ and $M$ be markings on a surface $F$ with boundary such that $L$ intersects $M$ transversally, denoted by $L\transv M$.
We write
\begin{align*}
\mathcal W_L(M)&:=\{\ell\in\mathcal L\mid\ell\cap M\not= \emptyset\},\\
w_L(M)&:=\min\{\#\mathcal W_L(M')\mid\text{$M'$ is ambient isotopic to $M$ in $F$, and $M'\transv L$}\}.
\end{align*}
Markings $L$ and $M$ are in {\it taut position} if there is no disk $\delta$ such that $\delta$ is bounded by a 2-gon consisting of parts of $L$ and $M$ or by a 3-gon consisting of $L$, $M$ and $\partial F$ as illustrated in Fig.~\ref{FIG:taut}.
Let $\ell \in \mathcal L$ be an arc such that $\ell\cap M\not=\emptyset$.
An {\it endarc} of $(\ell;M)$ (resp.~$(L;M)$) is an arc $r$ contained in $\ell$ (resp.~$L$) such that one point of $\partial r$ is in $\partial F$ and the other point is in $M$ and $\operatorname{int}(r)\cap M=\emptyset$.
We note that the number of endarcs of $(L;M)$ is equal to $2\cdot\#\mathcal W_L(M)$.
For an endarc $r$ of $(\ell;M)$, we denote by $m(r;M)$ the arc in $\mathcal M$ satisfying that $m(r;M)$ has an intersection with $r$.
		\begin{figure}[htbp]\centerline{
		\includegraphics{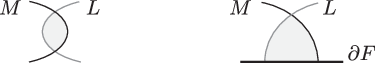}}
		\caption{Two disks bounded by $L$, $M$ and $\partial F$.}
		\label{FIG:taut}
		\end{figure}
\par
We prepare Lemmas~\ref{LEM:taut_then_turnable}, \ref{LEM:zero_crossing_marking} and \ref{LEM:marking_finitely_many_sequence} to show Theorem~\ref{THM:marking_rephrase}.

\begin{lemma}\label{LEM:taut_then_turnable}
Let $L$ and $M$ be markings on a surface $F$ with boundary such that $L$ and $M$ are in taut position.
For any endarc $r$ of $(L;M)$, the arc $m(r;M)\in\mathcal M$ is turnable.
\end{lemma}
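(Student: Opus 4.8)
The plan is to argue by contradiction. Suppose the arc $m:=m(r;M)$ is not turnable. By the trichotomy recorded in Figure~\ref{FIG:three_cases_configuration}, $\operatorname{int}D_M(m)$ is then an open annulus or an open M\"obius band; in either case the two sides of $m$ belong to a single complementary disk $D$ of $F\setminus M$, so $m$ occurs as two distinct sides $m_1,m_2$ of the polygon $\partial D$, and $B:=D_M(m)$ is obtained from $D$ by gluing $m_1$ to $m_2$. Inside $B$ the arc $m$ is a proper arc that cuts $B$ back into the single disk $D$, and its two endpoints lie on $\partial F\cap\partial B$. First I would record that, apart from $m_1$ and $m_2$, the disk $D$ has exactly one further marking side $m_f$ (the third arc guaranteed by Definition~\ref{marking}), and that on $\partial B$ this $m_f$ is the unique frontier arc where $B$ meets $\operatorname{cl}_F(F\setminus B)$, the remainder of $\partial B$ lying in $\partial F$.

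Next I would exploit the endarc $r$. Since $\operatorname{int}(r)\cap M=\emptyset$ and one endpoint $p$ of $r$ lies on $\partial F$, the arc $r$ lies in $\operatorname{cl}_F(D)\subset B$, running from $p\in\partial F$ to the point $q\in m$. The idea is to cut off, using $r$ together with a sub-arc of $m$, a disk $\delta\subset B$ whose boundary consists of a sub-arc of $r$ (hence of $L$), a sub-arc of $m$ (hence of $M$), and a single sub-arc of $\partial F$; such a $\delta$ is exactly a $3$-gon of the type forbidden in a taut position (Figure~\ref{FIG:taut}), yielding the contradiction. Concretely, writing $a,b$ for the two endpoints of $m$ on $\partial F$, if $p$ and one of $a,b$, say $a$, lie on the same boundary component of $B$, then $r\cup m|_{q}^{a}$ is a simple arc with both endpoints on that component, and hence cuts off a disk $\delta$; the point to verify is that the $\partial B$-side of $\delta$ can be chosen to avoid the frontier arc $m_f$, so that $\partial\delta$ meets $M$ only in $m$ and meets $\partial F$ in a single arc.

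The main obstacle is precisely this last step, namely controlling the third side of $\delta$. When the endarc ``wraps'' around $B$ relative to $m_f$, the naive disk cut off by $r\cup m|_{q}^{a}$ carries $m_f$ on its $\partial B$-side, so that $\partial\delta$ is no longer a clean $3$-gon. To handle this I would pass to an innermost choice: among all arcs of $L$ meeting $m$ inside $B$, select one that, together with sub-arcs of $m$ and of $\partial B$, cuts off an \emph{innermost} disk containing no further arc of $L$, and then invoke the full taut hypothesis, which forbids $L$--$M$ bigons as well as $L$--$M$--$\partial F$ trigons (so the innermost disk may instead be a $2$-gon, which is equally excluded). I expect that it is here that the hypothesis that $L$ is itself a \emph{marking}, rather than an arbitrary system of proper arcs, must be used: the triangulation of $F$ induced by $L$ should force any wrapping endarc to produce one of these forbidden disks elsewhere. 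The M\"obius-band case needs separate bookkeeping, since the orientation-reversing identification of $m_1$ with $m_2$ interchanges the two ends of $m$; together with the innermost-disk selection, I expect this to be the most delicate part of the argument.
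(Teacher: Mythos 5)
You have set up the problem exactly as the paper does: non-turnability of $m$ means the complementary disk $D$ of $M$ abutting $m$ on both sides is a hexagon with sides, in cyclic order, $m_1,b_1,m_2,b_2,m_f,b_3$ (the $b_i$ in $\partial F$), and $D_M(m)$ is the annulus or M\"obius band obtained by gluing $m_1$ to $m_2$. You have also correctly isolated the real difficulty --- and then left it unresolved. Observe first that tautness already forbids \emph{every} endarc joining two \emph{adjacent} sides of a complementary hexagon, since the triangle at their common corner is a forbidden $3$-gon whether or not $m$ is turnable; so the ``easy'' cases of your argument are vacuous, and the only configurations surviving tautness are precisely the wrapping ones, $r$ running from $m_1$ to $b_2$ or from $m_2$ to $b_3$. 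For these, $r\cup m$ cuts $D_M(m)$ into a pentagon and an annulus (or M\"obius band), and no forbidden $2$-gon or $3$-gon is produced by $r$ and $m$ alone. Your proposed repair --- an innermost disk cut off by arcs of $L$ meeting $m$ together with sub-arcs of $m$ and $\partial B$ --- does not close this gap: an innermost such region can perfectly well be a pentagon with sides $L,M,\partial F,M,\partial F$, which tautness does not exclude, and the marking property of $L$ that you invoke is not the hypothesis that is actually needed.

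What kills the wrapping configuration is the continuation of the arc $\ell\supset r$ beyond its first crossing $q$ with $m$. After crossing, $\ell$ enters the complementary pentagon of $r$ in the hexagon that carries the sides $b_1$ and $m_2$. From there $\ell$ must either terminate on $\partial F$ (on $b_1$, or on the portion of $b_2$ adjacent to $m_2$; in either case its last segment cuts off an $L$--$M$--$\partial F$ trigon at the corner $m_2\cap b_1$ or $m_2\cap b_2$), or recross $m$ through the $m_2$ side (producing an $L$--$M$ bigon), or recross $m$ through $m_1$ at a point strictly closer to the endpoint $u=m\cap b_1$, in which case it enters a quadrilateral with sides $L,M,\partial F,M$ and the same trichotomy repeats along a strictly shorter sub-arc of $m$. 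Since $\ell\cap m$ is finite the recursion terminates, and every terminal case yields a forbidden bigon or trigon; the M\"obius case is identical because the adjacency pattern of the hexagon is unchanged. This induction along the single arc $\ell$ --- not an innermost-disk selection over all of $L$, and not the fact that $L$ is a marking --- is the missing step. As written, your proof establishes the lemma only in the cases that tautness already renders impossible for independent reasons, and the paper's own two-line appeal to its figure conceals exactly the same issue.
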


\begin{proof}
Suppose that $m:=m(r;M)$ is not turnable.
The configuration of $D_M(m)$ is as illustrated in Fig.~\ref{FIG:proof_switch}, where we remember that $D_M(m)$ is the connected component of $F\setminus(M\setminus m)$ that contains $m$.
One point of $\partial r$ is in $m$, and the other point of $\partial r$ is in the boundary of an annulus or a M\"{o}bius band as illustrated in Fig.~\ref{FIG:proof_switch}.
In each of the cases, a contradiction occurs for our assumption that $L$ and $M$ are in taut position.
Therefore, $m\in\mathcal M$ is turnable.
\end{proof}
		\begin{figure}[htbp]\centerline{
		\includegraphics{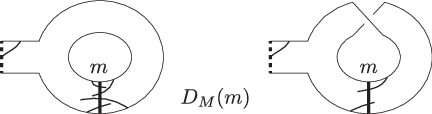}}
		\caption{Two markings in taut position have no nugatory crossings.}
		\label{FIG:proof_switch}
		\end{figure}

Lemmas~\ref{LEM:zero_crossing_marking} and \ref{LEM:marking_finitely_many_sequence} are essential parts for the proof of Theorem~\ref{THM:marking_rephrase}.
		
\begin{lemma}\label{LEM:zero_crossing_marking}
Let $L$ and $M$ be markings on a surface $F$ with boundary satisfying that $w_L(M)=0$.
Then, $L$ and $M$ are ambient isotopic in $F$.
\end{lemma}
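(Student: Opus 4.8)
The plan is to first exploit the hypothesis $w_L(M)=0$ to replace $M$ by an ambient isotopic marking, which I will continue to denote $M$, that is transverse to $L$ and in fact \emph{disjoint} from it; since ambient isotopy is transitive, it then suffices to prove that $L$ and $M$ are ambient isotopic in $F$ under the additional assumption $L\cap M=\emptyset$. The starting observation is structural: by the definition of a marking, every component of $\operatorname{cl}_F(F\setminus N_M)$ is a disk meeting $N_M$ in exactly three arcs, so each such disk $D$ is a hexagon whose boundary runs alternately along three full copies of arcs of $M$ and three arcs of $\partial F$. Because each arc $\ell\in\mathcal L$ is connected and disjoint from $M$, it lies in a single such hexagon $D$, with $\partial\ell$ on the three $\partial F$-sides of $D$.

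Next I would record the dichotomy for such an arc $\ell\subset D$: its two endpoints lie either on the same $\partial F$-side or on two distinct ones. I would rule out the first case. An innermost $\ell$ of that type cuts off a disk $\Delta\subset D$ bounded by $\ell$ and a subarc of $\partial F$, and since $\Delta$ contains no other arc of $L$ and no part of $M$, its closure is a component of $\operatorname{cl}_F(F\setminus N_L)$ meeting $N_L$ in a single arc, i.e.\ a monogon, contradicting the marking condition for $L$. Hence every $\ell$ joins two distinct $\partial F$-sides of its hexagon, and therefore cuts off a quadrilateral $Q_\ell\subset D$ bounded by $\ell$, one full arc $m$ of $M$ appearing as an $M$-side of $D$, and two subarcs of $\partial F$. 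Thus $Q_\ell$ is a band giving a parallelism between $\ell$ and a uniquely determined arc $m=\phi(\ell)\in\mathcal M$, which defines a map $\phi\colon\mathcal L\to\mathcal M$.

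Then I would show $\phi$ is a bijection. For injectivity, suppose two arcs of $L$ were parallel to the same $m$; choosing an innermost pair among those parallel to $m$, the band they cobound contains no other arc of $L$, so its closure is a component of $\operatorname{cl}_F(F\setminus N_L)$ meeting $N_L$ in exactly two arcs, i.e.\ a bigon, again contradicting the marking condition for $L$. So $\phi$ is injective, and since $\#\mathcal L=\#\mathcal M$ by Lemma \ref{LEM:marking_has_same_number_arc}, it is a bijection. I would then verify that the bands $\{Q_\ell\}_{\ell\in\mathcal L}$ are pairwise disjoint: bands lying in different hexagons of $M$ are automatically disjoint, and within a single hexagon the arcs of $L$ are parallel to distinct $M$-sides (by injectivity) and hence, being disjoint chords, cut off quadrilaterals at distinct corners, which are disjoint. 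Pushing each $\ell$ across its band $Q_\ell$ onto $\phi(\ell)$, simultaneously over all $\ell$, is then a well-defined ambient isotopy of $F$; since $\phi$ is a bijection it carries $L$ exactly onto $M$, which proves the lemma.

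The delicate points, where I expect the genuine work to lie, are the two combinatorial exclusions: verifying that the disks cut off in the monogon and bigon cases are honest components of $\operatorname{cl}_F(F\setminus N_L)$ meeting $N_L$ in exactly one, respectively two, arcs, so that the requirement of \emph{exactly three} arcs in the definition of a marking is genuinely violated; and confirming the disjointness of the parallelism bands, which is what allows the individual arcwise parallelisms to be assembled into a single ambient isotopy.
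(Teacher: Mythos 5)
Your proof is correct and follows essentially the same route as the paper's: both arguments first isotope $M$ off $L$ and then show the two disjoint markings are parallel arc-by-arc, using the fact that the marking condition forbids monogon and bigon complementary regions together with the count $\#\mathcal L=\#\mathcal M$ from Lemma \ref{LEM:marking_has_same_number_arc}. The only organizational difference is that you locate the arcs of $L$ inside the hexagonal components of $\operatorname{cl}_F(F\setminus N_M)$ and build the parallelism bands there directly, whereas the paper first pushes $M$ into a regular neighborhood $N_L$ and argues inside the disk components of $N_L$; the substance is the same.
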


\begin{proof}
We deform $M$ by an isotopy of $F$ so that $\#\mathcal W_L(M)=0$, that is, $L\cap M=\emptyset$.
We take a regular neighborhood $N_L$ of $L$ in $F$.
By an isotopy of $F$, we move $M$ so that $M\subset N_L$: such an isotopy exists, since $L$ is a marking.
We show that each connected component of $N_L$, which is a disk, contains exactly one arc of $\mathcal M$ as a subset and the arc is parallel to an arc of $\mathcal L$.
\par
Let $\ell\in\mathcal L$ be an arc, and let $N_\ell$ be the connected component of $N_L$ that contains $\ell$.
Since $M$ is a marking, $\partial F$ and any arc $m\in\mathcal M$  bound no disks in $N_\ell$.
Then, $m$ is parallel to $\ell$.
If $N_\ell$ contains two and more arcs of $\mathcal M$, the arcs are parallel each other.
This leads to a contradiction, since $M$ is a marking.
Then, each connected component of $N_L$ contains no arcs of $\mathcal M$ or contains exactly one arc of $\mathcal M$.
By Lemma~\ref{LEM:marking_has_same_number_arc}, if a connected component of $N_L$ contains no arcs, this leads to a contradiction with $\#\mathcal L=\#\mathcal M$.
Hence, every connected component of $N_L$ contains exactly one arc, and $L$ and $M$ are parallel.
Therefore, $L$ and $M$ are ambient isotopic in $F$.
\end{proof}

\begin{lemma}\label{LEM:marking_finitely_many_sequence}
Let $L$ and $M$ be markings on a surface $F$ with boundary such that $L$ and $M$ are in taut position and $\#\mathcal W_L(M) = w_L(M)>0$.
Then, there is a marking $M'$ such that $M'$ and $M$ are related by finitely many turnings of arcs and $w_L(M')<\#\mathcal W_L(M) $.
\end{lemma}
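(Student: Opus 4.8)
The plan is to take $\#\mathcal W_L(M)$ as the quantity to be lowered and to exploit how narrowly a single turning can disturb it. Turning an arc $m$ into $m^*$ replaces $m$ by $m^*$ and moves nothing else, so every other arc of $M$ keeps exactly its old intersection with $L$; hence the only arc whose membership in $\mathcal W_L$ can change is the turned one, and the count $\#\mathcal W_L$ goes up by $1$, stays the same, or drops by $1$, dropping precisely when $m\cap L\neq\emptyset$ while $m^*\cap L=\emptyset$. It is therefore enough to exhibit a finite sequence of turnings ending at a marking $M'$ with $M'\transv L$ for which $\#\mathcal W_L(M')<\#\mathcal W_L(M)$: since $w_L(M')\le\#\mathcal W_L(M')$ by the definition of $w_L$, this yields $w_L(M')<\#\mathcal W_L(M)$, as required. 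Any nontransversality produced along the way is removed by a small perturbation.

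To locate an arc to turn I would use both hypotheses. Because $\#\mathcal W_L(M)=w_L(M)>0$, some arc of $\mathcal L$ meets $M$, so $(L;M)$ has endarcs. Fix an endarc $r$, running from a point of $\partial F$ to a point $p$ on $m:=m(r;M)$. By Lemma \ref{LEM:taut_then_turnable}, taut position makes $m$ turnable, so $D_M(m)$ is a disk (the first configuration in Figure \ref{FIG:three_cases_configuration}), and $r$ lies in $D_M(m)$ because $\operatorname{int}(r)$ misses $M$ and $r$ abuts $m$ at $p$. The move I would make is to \emph{turn $m$ across $r$}: perform the turning inside $D_M(m)$ whose outcome $m^*$ is disjoint from $r$, so that the crossing $p$ is destroyed.

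To see that such turnings make progress, I would run an innermost-disk argument driven by taut position. Since $D_M(m)\cap M=m$, any sub-disk of $D_M(m)$ cut off by $r$ meets $M$ only along $m$. An innermost cut-off sub-disk that moreover runs along $\partial D_M(m)$ only in $\partial F$ would be a $3$-gon bounded by $L$, $M$ and $\partial F$, which taut position forbids; this contradiction is the engine that lets me turn $m$ so as to erase $p$ while, by the no-$2$-gon condition, not creating crossings with the neighbouring arcs of $M$ on $\partial D_M(m)$. The minimality $\#\mathcal W_L(M)=w_L(M)$ guarantees that the pattern of $L\cap M$ is already efficient, so no turning is forced to reintroduce crossings and the total number $\#(L\cap M)$ of intersection points, a nonnegative integer, strictly decreases. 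After finitely many turnings the chosen arc is disjoint from $L$, and hence $\#\mathcal W_L$ has dropped.

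The hard part is this last geometric claim: that turning across an innermost endarc genuinely lowers $\#(L\cap M)$ and never transfers a crossing onto an arc of $M$ that was previously disjoint from $L$ — equivalently, that the disk swept out during the turning is clean. Verifying this requires the full case analysis over the configuration of $D_M(m)$ in Figure \ref{FIG:three_cases_configuration} and over how the chords of $L$ in $D_M(m)$ sit relative to $m$, using the no-$2$-gon and no-$3$-gon conditions to exclude the bad cases and the minimality hypothesis to keep $\#\mathcal W_L$ from rising above its initial value during the intermediate turnings. Once this bookkeeping is in place the statement follows.
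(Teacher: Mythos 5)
Your reduction is fine as far as it goes: a turning replaces one arc and leaves the rest of $M$ untouched, so $\#\mathcal W_L$ changes by at most one, and it suffices to produce finitely many turnings after which some arc that met $L$ has been replaced by one disjoint from $L$. Your Case-1-like situation (an endarc $r$ on $\ell$ with $\#(\ell\cap M)=1$, turn $m(r;M)$ into an arc $m^*$ running parallel to $\ell$ with $m^*\cap L=\emptyset$) also matches the paper. The gap is exactly where you say the hard part is, and it is not a bookkeeping issue that taut position plus minimality will absorb. When $\#(\ell\cap M)>1$, the replacement arc $m^*$ is not free to be ``$m$ pushed off $r$'': it must be one of the admissible diagonals of the hexagon $D_M(m)$, so that $M(m,m^*)$ is again a marking, and the only available choice runs along a portion of $\ell$ and then along the \emph{next} arc $m_1$ of $\mathcal M$ crossed by $\ell$. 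With that choice one gets $\#\mathcal W_L(M_1)=\#\mathcal W_L(M_0)$ (no drop at all), and the total geometric count $\#(L\cap M)$ can \emph{increase}, since $m^*$ may pick up intersections with $\ell\setminus r_1$ and with other arcs of $L$. So your proposed monotone quantity, ``$\#(L\cap M)$ strictly decreases at each turning,'' is false for the honest construction, and the appeal to minimality of $\#\mathcal W_L(M)$ cannot rescue it: minimality constrains the isotopy class of the initial pair $(L,M)$, not the intermediate markings produced by turnings, which are genuinely different markings.

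What the paper does instead is accept that each turning along $\ell$ leaves $\#\mathcal W_L$ unchanged but strictly decreases a secondary quantity, namely $\#(m_i^*\cap\ell)<\#(m_{i+1}\cap\ell)$, and then proves termination of the resulting recursive sequence of turnings by a separate argument (tracking descendants of arcs and the multiset of their intersection numbers with $\ell$) until either $w_L$ drops or one reaches the $\#(\ell\cap M)=1$ case. Your proposal contains no substitute for this two-tier induction; the innermost-$3$-gon argument you sketch only re-proves turnability (Lemma \ref{LEM:taut_then_turnable}) and does not control where $m^*$ must go. To repair the proof you would need to (i) specify the admissible $m^*$ concretely, (ii) identify a quantity that actually decreases when $\#\mathcal W_L$ does not, and (iii) prove the process terminates --- which is essentially the paper's Case 2.
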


\begin{proof}In this proof, for any marking $N$ on $F$, we write $\mathcal W(N)=\mathcal W_L(N)$ and $w(N)=w_L(N)$ for short notations.
Let $\ell$ be an arc in $\mathcal L$ such that $\ell\cap M\not=\emptyset$.
\par
{\bf Case 1}: we suppose that $\#(\ell\cap M)=1$.
Let $r$ be an endarc of $(\ell;M)$, and we put $m=m(r;M)$.
Since $L$ and $M$ are in taut position, $m\in\mathcal M$ is turnable by Lemma~\ref{LEM:taut_then_turnable}, that is, $\operatorname{int}D_M(m)$ is an open disk.
We take a proper arc $m^*$ located along $\ell$ as illustrated in Fig.~\ref{FIG:proof_Luo_easy_turning}: $m^*$ is close enough to $\ell$ and $m^*\cap L=\emptyset$, where it may hold that $(L\setminus\ell)\cap M\not=\emptyset$, although $L\setminus\ell$ are not illustrated.
By turning $m$ into $m^*$ in $D_M(m)$, we obtain a new marking $M':=M(m,m^*)$ on $F$.
Now, $L$ and $M$ are in taut position.
Then, any endarc of $(L;M)$ that has no intersections with $m$ is also an endarc of $(L; M')$.
Any endarc $r'$ of $(L;M)$ satisfying that $r'\cap m\not=\emptyset$ and $r'\not\subset\ell$ is contained in an endarc of $(L;M')$.
Both $r$ and the other endarc of $(\ell;M)$ disappear from $M'$.
Since the number of endarcs of $(L; N)$ is equal to $2\cdot\#\mathcal W(N)$ for any marking $N$, we have $\#\mathcal W(M')=\#\mathcal W(M)-1$; then $\#\mathcal W(M')<\#\mathcal W(M)$.
Since $w(M')\le\#\mathcal W(M')$, we have $w(M')<\#\mathcal W(M)$.
		\begin{figure}[htbp]\centerline{
		\includegraphics{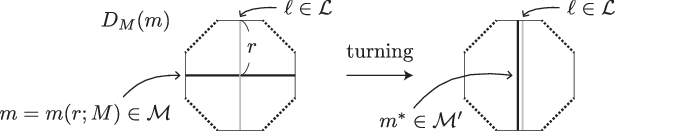}}
		\caption{Configurations of $\ell$, $r$, $m$, $m^*$ and $D_{M}(m)$.}\label{FIG:proof_Luo_easy_turning}
		\end{figure}
\par
{\bf Case 2}: we suppose that $\#(\ell\cap M )>1$.
Set $M_0=M$.
Let $r_0$ be an endarc of $(\ell;M_0)$, and we put $m_0=m(r_0;M_0)$.
Since $L$ and $M_0$ are in taut position, $m_0\in\mathcal M_0$ is turnable, that is $\operatorname{int}D_{M_0}(m_0)$ is an open disk.
Let $r_1$ be the arc contained in $\ell$ such that $r_0\subset r_1$, $\partial(r_1)\subset M_0\cup\partial F$ and $\#(r_1\cap M_0)=2$.
Let $m_1$ the arc in $\mathcal M_0$ such that $\#(m_1\cap r_1) = 1$ and $m_1\not=m_0$, see the left in Fig.~\ref{FIG:proof_Luo_essential_turning}.
\par
We take a proper arc $m_0^*$ located partially along $\ell$ and $m_1$ as illustrated in the right of Fig.~\ref{FIG:proof_Luo_essential_turning}: $m_0^*$ is close enough to a part of $\ell$ and $m_1$, and there is a possibility that $m_0^*$ has intersections with $\ell\setminus r_1$, although $\ell\setminus r_1$ is not illustrated in Fig.~\ref{FIG:proof_Luo_essential_turning}.
By turning $m_0$ into $m_0^*$ in $D_{M_0}(m_0)$, we have a new marking $M_1:=M_0(m_0,m_0^*)$.
Note that $r_1$ is an endarc of $(\ell; M_1)$, $m_1=m(r_1;M_1)$ and $m_1$ is also an arc in $\mathcal M_1$.
		\begin{figure}[htbp]\centerline{
		\includegraphics{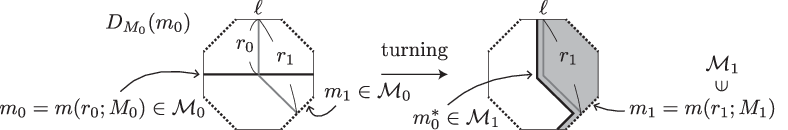}}
		\caption{Configurations of $\ell$, $r_0$, $r_1$, $m_0$, $m_1$, $m_0^*$ and $D_{M_0}(m_0)$.}
		\label{FIG:proof_Luo_essential_turning}
		\end{figure}
\par
We show that (i) $w(M_1)\le\#\mathcal W(M_1)=\#\mathcal W(M_0)$ and (ii) $\#(m_0^\cap\ell)<\#(m_1\cap\ell)$.
Now, $L$ and $M_0$ are in taut position.
Then, by the configuration of $m_0^*$, any endarc $r'$ of $(L;M_0)$ is also an endarc of $(L;M_1)$, $r'$ contains an endarc of $(L;M_1)$, or $r'$ is contained in an endarc of $(L;M_1)$.
Since the number of endarcs of $(L; N)$ is equal to $2\cdot\#\mathcal W(N)$ for any marking $N$, we have $\#\mathcal W(M_1)=\#\mathcal W(M_0)$, then (i) holds.
The arc $r_1$ has an intersection with $m_1$ and has no intersections with $m_0^*$; then we have $\#(m_0^*\cap\ell)\le \#(m_1\cap\ell)-1$, that is, (ii) holds.
\par
If the inequality $w(M_1)<\#\mathcal W(M_0)$ holds, the proof completes.
Then, we suppose that $w(M_1)=\#\mathcal W(M_1)=\#\mathcal W(M_0)$.
Since $L$ and $M_1$ are also in taut position by the configuration of $m_0^*$, the arc $m_1\in\mathcal M_1$ is also turnable by Lemma~\ref{LEM:taut_then_turnable}, that is, $\operatorname{int}D_{M_1}(m_1)$ is an open disk; the shaded region in Fig.~\ref{FIG:proof_Luo_essential_turning} is a half part of $D_{M_1}(m_1)$.
If $\#(m_1\cap\ell) = 1$, the proof completes by considering Case 1.
Then, we suppose that $\#(m_1\cap\ell)>1$.
\par
In the same way to construct $M_1$, we continue turning arcs along a part of $\ell$ while keeping $L$ fixed;
that is, we recursively define $r_{i}$, $m_i$, $m_i^*$ and a marking $M_{i+1}:=M_{i}(m_{i}, m_{i}^*)$ for each integer $i \ge 0$ until it holds that $w(M_n)<\#\mathcal W(M_0)$ or $\#(m_n\cap\ell)=1$ for some integer $n$.
Note that $r_{i}$ is the endarc of $(\ell;M_{i})$ such that $r_{i-1}\subset r_{i}$, and $m_{i}=m(r_{i};M_{i})$, see Fig.~\ref{FIG:proof_Luo_essential_turning_i}.
By the above process, we obtain a sequence $(M_i)_{1\le i\le n}$ of markings satisfying that
\[
\text{ (i$'$) }w(M_i)=\#\mathcal W(M_i)=\#\mathcal W(M_0) \text{ and (ii$'$) }\#\big(m_{i}^*\cap\ell\big)<\#(m_{i+1}\cap\ell)
\]
for any $i$ with $0\le i<n$, in the same manner as (i) and (ii).
\par
It suffices to show that  $(M_i)_{1\le i\le n}$ is finite; then we suppose that $(M_i)_{1\le i\le n}$ is infinite.
We prepare notations only for this proof.
For arcs $\alpha_i\in\mathcal M_i$ and $\alpha_j\in\mathcal M_j$ with $0\le i<j$, we write $\alpha_i\to\alpha_j$ if $\alpha_i=\alpha_j$ as a subset of $F$ or there is an integer $k$ with $i\le k<j$ such that $M_{k}(\alpha_i,\alpha_i^*)=M_{k+1}$ and $\alpha_j=\alpha_i^*$ as a subset of $F$.
For arcs $m\in\mathcal M_0$ and $m'\in\mathcal M_i$, $m'$ is a {\it descendant} of $m$ if there is a finite sequence $\alpha_0$, \ldots, $\alpha_s$ of proper arcs in $F$ such that $\alpha_0=m$, $\alpha_s=m'$ and $\alpha_j\to\alpha_{j+1}$ for any $j$ with $0\le j<s$.
An arc $m \in \mathcal M_0$ is {\it infinite-type} if $m$ continues to be turned endlessly, that is, for any integer $i$, there exists an integer $\kappa$ with $\kappa>i$ such that $M_\kappa(m_\kappa,m_\kappa^*)=M_{\kappa+1}$, where $m_\kappa\in\mathcal M_\kappa$ is the descendant of $m$.
We put
\begin{align*}
\mathcal M_0^{\infty}&=\{m\in\mathcal M_0\mid\text{$m$ is infinite-type}\}, \text{ and}\\
P_i&=\{\#(m'\cap\ell)\mid\text{$m'\in\mathcal M_i$ and $m'$ is an descendant of an infinite-type arc of $\mathcal M_0$}\},
\end{align*}
for any integer $i$, where we define $P_i$ as a multiset, that is, each element of $P_i$ has the multiplicity.
Let $\kappa$ be an integer large enough such that all descendants of $\mathcal M_0\setminus\mathcal M_0^{\infty}$ are no longer turned in $\mathcal M_i$ for any $i$ with $\kappa<i$.
By the inequality (ii$'$), after several times turnings, an arc $m\in\mathcal M_\kappa$ with $\#(m\cap\ell)=\max P_\kappa$ disappears, and a new arc $m'$ with $\#(m'\cap\ell)<\max P_\kappa$ appears instead.
Therefore, there exists an integer $k$ such that $\max P_{\kappa+k}<\max P_\kappa$ or that the multiplicity of $\max P_{\kappa+k}$ is less than that of $\max P_\kappa$, where we remember that $P_i$ is a multiset.
Hence, $\max P_{\kappa'}$ is 0 for an integer $\kappa'$ large enough with $\kappa<\kappa'$.
This leads to a contradiction with the infiniteness of $(M_i)_{1\le i\le n}$; then $(M_i)_{1\le i\le n}$ is finite.
\end{proof}
		\begin{figure}[htbp]\centerline{
		\includegraphics{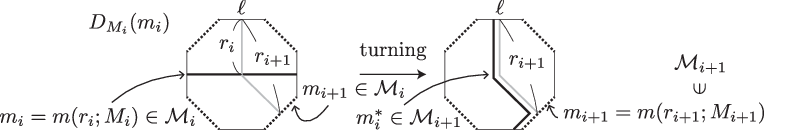}}
		\caption{Configurations of $\ell$, $r_i$, $r_{i+1}$, $m_i$, $m_{i+1}$, $m_i^*$ and $D_{M_i}(m_i)$.}
		\label{FIG:proof_Luo_essential_turning_i}
		\end{figure}

\begin{proof}[Proof of Theorem~\ref{THM:marking_rephrase}]
Let $L$ and $M$ be markings on a surface $F$ with boundary.
If $w_L(M)=0$, the proof completes by Lemma~\ref{LEM:zero_crossing_marking}; then we assume that $w_L(M)>0$.  
By an isotopy of $F$, we deform $M$ so that $L$ and $M$ are in taut position and $\#\mathcal W_L(M)=w_L(M)>0$.
By Lemma~\ref{LEM:marking_finitely_many_sequence}, by finitely many turnings of $M$, we have a marking $M_1$ such that $w_L(M_1)<\#\mathcal W_L(M)$.
By continuing the same operation,
we have a finite sequence $M$, $M_1$, \ldots, $M_n$ of markings such that $w_L(M_n)=0$.
Therefore, $L$ and $M_n$ are ambient isotopic in $F$ by Lemma~\ref{LEM:zero_crossing_marking}.
\end{proof}

\section{A diagram of a spatial surface with boundary}\label{SEC:spatial_surface}
We regard $S^3$ as the one-point compactification $\mathbb{R}^3\cup\{\infty\}$ of the Euclidean space $\mathbb R^3$.
We regard $\mathbb R^2$ as the set $\{(x,y,z)\in\mathbb R^3\mid z=0\}$ and regard $S^2$ as
$\mathbb R^2\cup\{\infty\}$.
Note that $\mathbb R^2\subset\mathbb R^2\cup\{\infty\}=S^2\subset\mathbb R^3\cup\{\infty\}=S^3$.
Let $\mathrm{pr}:\mathbb R^3\to\mathbb R^2;(x,y,z)\mapsto(x,y,0)$ be the canonical projection.
\par
A {\it spatial surface} is a compact surface embedded in $S^3$.
A {\it spatial closed surface} is a spatial surface that is a closed surface.
A {\it spatial surface with boundary} is a spatial surface that is a surface with boundary.
A spatial surface that has $n$ connected components is said to be of {\it $n$-components}.
In Sections~\ref{SEC:spatial_surface} and \ref{SEC:proof}, we call a spatial surface with boundary a spatial surface for short, unless we note otherwise.
If a spatial surface has intersection with $\infty$, we can deform the spatial surface so that it is contained in $\mathbb R^3=S^3\setminus\{\infty\}$ by perturbation around $\{\infty\}$; then we assume that a spatial surface is contained in $\mathbb R^3$.
A disk embedded in $S^3$ is unique up to ambient isotopy; then we assume that a spatial surface has no disk components throughout this paper.
\par
For the same reason as the case of spatial surfaces, we assume that a spatial graph is contained in $\mathbb R^3$.
A spatial graph $G$ is in {\it semiregular position} with respect to $\mathrm{pr}$ if it satisfies that
\begin{itemize}
\item
$\mathrm{pr}(G)$ has finitely many multiple points, that is,
\[
\text{
$\{p\in\mathrm{pr}(G)\mid\#(G\cap\mathrm{pr}^{-1}(p))\ge2\}$ is finite,}
\]
\item
any multiple point $p\in\mathrm{pr}(G)$ is a double point or a triple point, that is,
\[
2\le\#(G\cap\mathrm{pr}^{-1}(p))\le3,
\]
\item
$\mathrm{pr}(v)\ne\mathrm{pr}(u)$ for any distinct trivalent vertices $u$ and $v$ of $G$,
\item
$G\cap\mathrm{pr}^{-1}(p)$ contains no vertices of $G$ for any triple point $p\in\mathrm{pr}(G)$,
\item
there are three disjoint arcs $I$, $I'$ and $I''$ in $G$ such that $\mathrm{pr}(I)$, $\mathrm{pr}(I')$ and $\mathrm{pr}(I'')$ intersect transversally at $p$
for any triple point $p\in\mathrm{pr}(G)$.
\end{itemize}
Let $G$ be a spatial graph in semiregular position.
A multiple point $p\in\mathrm{pr}(G)$ is  {\it regular} if $\#(G\cap\mathrm{pr}^{-1}(p))=2$, $G\cap\mathrm{pr}^{-1}(p)$ contains no vertices of $G$ and there are two disjoint arcs $I$ and $I'$ in $G$ such that $\mathrm{pr}(I)$ and $\mathrm{pr}(I')$ intersect transversally at $p$.
A multiple point $p\in\mathrm{pr}(G)$ is {\it non-regular} if $p$ is not regular.
We note that any multiple point $p\in\mathrm{pr}(G)$ is non-regular if $G\cap\mathrm{pr}^{-1}(p)$ has vertices.
A spatial graph $G$ is in {\it regular position} with respect to $\mathrm{pr}$ if it satisfies that
\begin{itemize}
\item
$G$ is in semiregular position with respect to $\mathrm{pr}$, and
\item
every double point $p\in\mathrm{pr}(G)$ is regular.
\end{itemize}
A {\it projection} of spatial graphs is the image $\mathrm{pr}(G)$ of a spatial graph $G$ in regular position.
A {\it spatial graph diagram} is a pair of a projection of spatial graphs and under/over information at every double point.
For a spatial graph $G$ in regular position, we denote by $D(G)$ the spatial graph diagram satisfying that the projection of $D(G)$ is $\mathrm{pr}(G)$ and that the under/over information at each double point is directly obtained from $G$.

We denote by $V_n(G)$ the set of $n$-valent vertices of a graph $G$.
In this paper, we regard a spatial graph diagram $D$ as a graph; we denote by $V_n(D)$ the set of $n$-valent vertices of $D$.
A {\it $2,3$-graph} is a graph whose any vertex is bivalent or trivalent.
Note that a trivalent graph is a kind of 2,3-graph.

\begin{Def}\label{DEF:sign_of_spatial_graph}{\rm
For a spatial $2,3$-graph diagram $D$, we call a map $s:V_2(D) \to \{\pm 1\}$ a {\it sign} of $D$.
A {\it spatial surface diagram} is a pair $(D,s)$ of a spatial 2,3-graph diagram $D$ and a sign $s$ of $D$.
}\end{Def}

Although any spatial trivalent graph diagram $D$ has no bivalent vertices, we regard the empty function $\emptyset_{\{\pm1\}}:V_2(D)\to\{\pm1\}$ as a sign of $D$.
For a spatial surface diagram $D:=(D,s)$, we construct an unoriented spatial surface $\operatorname{Sf}(D)$ by  operations as illustrated in Fig.~\ref{FIG:construction_of_spatial_surface}:
an arc is replaced with a band, a bivalent vertex is replaced with a twisted band, and one of two crossed bands is slightly perturbed into the direction of the z-axis around each crossing.
An example of a spatial surface diagram $D$ and its spatial surface $\operatorname{Sf}(D)$ is illustrated in Fig.~\ref{FIG:spatial_surface_example}.
		\begin{figure}[htbp]\centerline{
		\includegraphics{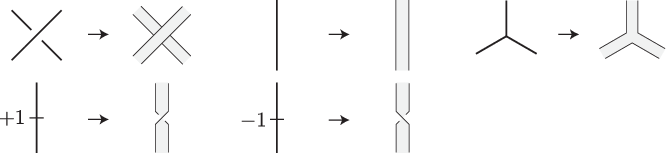}}
		\caption{Operations for spatial surface diagrams.}
		\label{FIG:construction_of_spatial_surface}
		\end{figure}
		
		\begin{figure}[htbp]\centerline{
		\includegraphics{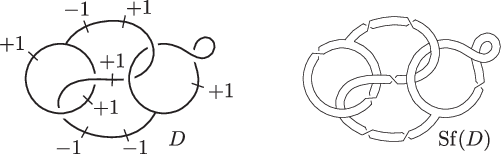}}
		\caption{A spatial surface diagram $D$ and its spatial surface $\operatorname{Sf}(D)$.}
		\label{FIG:spatial_surface_example}
		\end{figure}

\begin{Def}\label{DEF:Reidemeister_move}{\rm
The {\it Reidemeister moves} for a spatial surface diagram are defined as illustrated in Fig.~\ref{FIG:spatial_surface_Reidemeister_move}:
each area outside of a dotted circle is not changed before and after the replacement.
}\end{Def}

Spatial surface diagrams $D$ and $D'$ are said to be {\it related} by R0--R6 moves in $\mathbb R^2$ (resp.~$S^2$)
if there exists a finite sequence $(D_i)_{1\le i\le n}$ of spatial surface diagrams with $D_1=D$ and $D_n=D'$ such that $D_i$ is transformed into $D_{i+1}$ by one of R0--R6 moves in $\mathbb R^2$ (resp.~$S^2$) for any $i$.

\begin{rmk}\label{REM:curl_obtained_from_R0_R1}{\rm
An R${\omega}$ move is realized by using R0 and R1 moves.
}\end{rmk}

As is mentioned in Lemma~\ref{LEM:spatial_surface_has_diagram}~(1), for an arbitrary unoriented spatial surface $F$,
there is a spatial surface diagram $D:=(D,s)$ such that $\operatorname{Sf}(D)$ and $F$ are ambient isotopic in $S^3$,
where we call $D$ a {\it diagram} of $F$, hereafter.
Theorems~\ref{THM:main_non_oriented} and \ref{THM:main_oriented} below are main theorems in this paper.
\begin{thm}\label{THM:main_non_oriented}
Let $D$ and $D'$ be diagrams of unoriented spatial surfaces $F$ and $F'$, respectively.
Then, the following conditions are equivalent:
\begin{enumerate}
\item[(a)] $F$ and $F'$ are ambient isotopic in $S^3$.
\item[(b)] $D$ and  $D'$ are related by R0--R6 moves in $\mathbb{R}^2$.
\end{enumerate}
\end{thm}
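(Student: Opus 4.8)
The plan is to prove the two implications separately, with (b) $\Rightarrow$ (a) being routine and (a) $\Rightarrow$ (b) carrying essentially all of the difficulty. For (b) $\Rightarrow$ (a) I would argue move by move: it suffices to check that if $D'$ is obtained from $D$ by a single R0--R6 move then $\operatorname{Sf}(D)$ and $\operatorname{Sf}(D')$ are ambient isotopic in $S^3$. Since each move is a local replacement inside a dotted disk with the exterior fixed, I only need to exhibit, for each move, a local ambient isotopy of the thickened picture carrying one band configuration to the other; outside the disk the two surfaces already agree. The band analogues of the classical R2 and R3 and the vertex moves are standard once arcs are thickened to bands, while the twist moves (R0, R1, and the derived R$\omega$ of Remark \ref{REM:curl_obtained_from_R0_R1}) reflect the elementary fact that a kink in a band is ambient isotopic to an untwisted band carrying the corresponding full twist, the half-twists being recorded by the signed bivalent vertices. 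Then (b) $\Rightarrow$ (a) follows by composing finitely many such local isotopies.

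The substance is (a) $\Rightarrow$ (b). The strategy is to pass from surfaces to their trivalent spines, which are spatial trivalent graphs, and to combine two inputs: Theorem \ref{THM:Luo}, which says any two trivalent spines of a fixed spatial surface are related by IH-moves and isotopies, and the classical diagrammatic characterization of spatial graphs up to ambient isotopy by generalized Reidemeister moves \cite{kauffman1989}. Concretely, a signed diagram $(D,s)$ records a spatial trivalent graph $G$ (a spine of $\operatorname{Sf}(D)$) together with framing data: the blackboard framing of the projection, corrected by the signed half-twists at the bivalent vertices. Using that $F\simeq F'$ together with $\operatorname{Sf}(D)\simeq F$ and $\operatorname{Sf}(D')\simeq F'$ (Lemma \ref{LEM:spatial_surface_has_diagram}), I would first fix an ambient isotopy $\phi$ of $S^3$ with $\phi(\operatorname{Sf}(D))=\operatorname{Sf}(D')$. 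This $\phi$ carries the spine of $D$ to a spine $\phi(G)$ of $\operatorname{Sf}(D')$, while the spine of $D'$ is another spine of the same surface; by Theorem \ref{THM:Luo} these two spines are related by finitely many IH-moves and isotopies inside $\operatorname{Sf}(D')$. Hence the spatial graphs underlying $D$ and $D'$ are related by an ambient isotopy of $S^3$ followed by finitely many IH-moves, all taking place within the surface.

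It then remains to realize each of these operations at the level of signed diagrams by R0--R6 moves. An ambient isotopy of the spatial graphs is realized by generalized Reidemeister moves for spatial graphs, which appear among our crossing and vertex moves, and an IH-move of spines is realized by the R6 move (cf.\ Remark \ref{RMK:IH_revolving}). The crux, as anticipated in the introduction, is the twisted-band bookkeeping: since the framing is genuine data for an embedded surface, a graph move or an isotopy that preserves the surface need not preserve the blackboard framing of the spine, so at every step I must correct the framing by creating, cancelling, or sliding signed bivalent vertices. The key lemma to isolate is that the signs record the framing of the bands relative to the blackboard framing completely and consistently, so that any two signed diagrams presenting the same framed spatial graph differ only by R0, R1 and R$\omega$ moves (which transport half- and full twists), and that each generalized Reidemeister move and each IH-move can be upgraded to a band move preserving $\operatorname{Sf}$ after such a framing correction.

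I expect this framing-normalization step --- showing that every twist discrepancy forced by the moves is exactly one realizable by R0, R1 and R$\omega$ together with the half-twist slides --- to be the main obstacle, and it is where the delicate case analysis over the configurations of bands at crossings and vertices will be required. The orientation-sensitivity of a half-twist on a non-orientable band is precisely what makes the signed formulation (rather than a merely $\mathbb{Z}$-valued framing) necessary, and verifying that the sign data behaves coherently under sliding a twist across a trivalent vertex and past a crossing is the heart of the argument; once this is established, concatenating the diagrammatic realizations of the ambient isotopy, the IH-moves, and the framing corrections yields the required finite sequence of R0--R6 moves, completing (a) $\Rightarrow$ (b).
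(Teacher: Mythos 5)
Your proposal follows essentially the same route as the paper: the (b)$\Rightarrow$(a) direction is handled move-by-move, and (a)$\Rightarrow$(b) is decomposed into (i) transporting the spine of $D$ by the ambient isotopy onto a spine of $\operatorname{Sf}(D')$ and (ii) relating the two spines of that one surface by Theorem \ref{THM:Luo}, with IH-moves realized by R6 and the sign/framing bookkeeping correctly identified as the delicate part. The case analysis you defer is precisely the content of the paper's Lemmas \ref{LEM:step1(simple_spine_into_complicated_spine)} and \ref{LEM:Step2(Luo_spatial_surface_version)}, where the isotopy is tracked through its non-generic events (including the spine crossing the corner-arcs of the surface, which is where the R0/R1/R4 moves and sign changes arise) and the residual discrepancy on each edge is shown to be absorbed by R0 moves via a framing argument.
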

		\begin{figure}[htbp]\centerline{
		\includegraphics{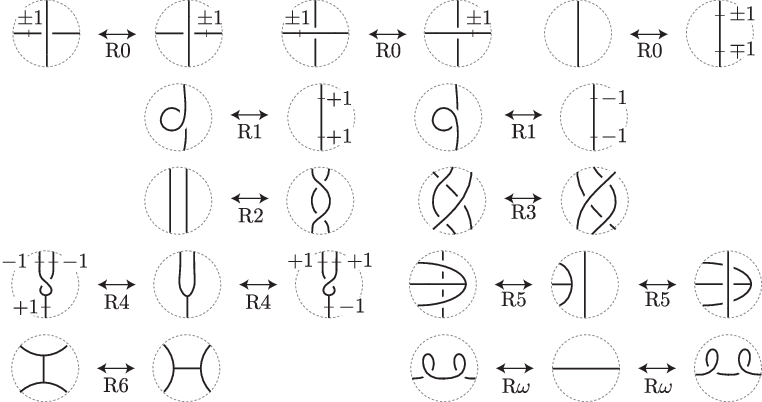}}
		\caption{Reidemeister moves for a spatial surface diagram.}\label{FIG:spatial_surface_Reidemeister_move}
		\end{figure}

Next, we consider oriented spatial surfaces.
Let $D$ be a spatial surface diagram with no bivalent vertices; $\operatorname{Sf}(D)$ is orientable.
We give an orientation for $\operatorname{Sf}(D)$ as illustrated in Fig.~\ref{FIG:oriented_spatial_surface}:
a side of $\operatorname{Sf}(D)$ facing the positive direction of the z-axis is defined as a front side and the other side of $\operatorname{Sf}(D)$ is defined as a back side.
We denote by $\overrightarrow{\operatorname{Sf}}(D)$ the spatial surface $\operatorname{Sf}(D)$ equipped with the above-mentioned orientation.
		\begin{figure}[htbp]\centerline{
		\includegraphics{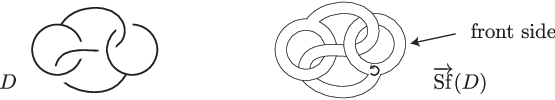}}
		\caption{A spatial surface diagram $D$ with no bivalent vertices, and $\protect\overrightarrow{\operatorname{Sf}}(D)$.}
		\label{FIG:oriented_spatial_surface}
		\end{figure}

As is mentioned in Lemma~\ref{LEM:spatial_surface_has_diagram} (2), for any oriented spatial surface $F$, there is a spatial surface diagram $D$ with no bivalent vertices such that $\overrightarrow{\operatorname{Sf}}(D)$ and $F$ are ambient isotopic in $S^3$ including orientations, where we call $D$ a {\it diagram} of $F$, hereafter.

\begin{thm}\label{THM:main_oriented}
Let $D$ and $D'$ be diagrams of oriented spatial surfaces $F$ and $F'$, respectively.
Then, the following conditions are equivalent.
\begin{enumerate}
\item[(a)]
$F$ and $F'$ are ambient isotopic in $S^3$ including orientations.
\item[(b)]
$D$ and $D'$ are related by R${\omega}$, R2--R3 and R5--R6 moves in $\mathbb R^2$.
\item[(c)]
$D$ and $D'$ are related by R2--R3 and R5--R6 moves in $S^2$.
\end{enumerate}
\end{thm}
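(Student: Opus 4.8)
The plan is to prove Theorem \ref{THM:main_oriented} by establishing the implications (a) $\Rightarrow$ (c) $\Rightarrow$ (b) $\Rightarrow$ (a), reducing the oriented case to the machinery already built for the non-oriented case in Theorem \ref{THM:main_non_oriented}. The implication (b) $\Rightarrow$ (a) is the soundness direction: each of the moves R$\omega$, R2--R3 and R5--R6 in $\mathbb{R}^2$ must be checked to preserve the ambient isotopy class of $\overrightarrow{\operatorname{Sf}}(D)$ \emph{together with its orientation}. Since these moves are a subset of the moves appearing in Theorem \ref{THM:main_non_oriented}, the underlying unoriented surfaces are already known to be ambient isotopic; the only additional content is that none of these moves flips the front/back labelling. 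Because the orientation is defined globally by "the side facing the positive $z$-direction," and because the allowed moves never introduce a bivalent vertex (a twisted band), the orientation is manifestly transported along each local replacement. The excluded moves R0, R1 and R4 are precisely those that would create or destroy twisted bands or perform local surgeries inconsistent with a fixed orientation, which is why they must be dropped here.

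The implication (a) $\Rightarrow$ (c) is the completeness direction and is the heart of the theorem. The approach is to invoke the corresponding step in the proof of Theorem \ref{THM:main_non_oriented} (which in turn rests on Theorem \ref{THM:Luo}, the IH-move characterization of trivalent spines), and then to track orientations throughout. Given orientation-preserving ambient isotopic oriented surfaces $F$ and $F'$, their diagrams $D$ and $D'$ have no bivalent vertices, so both are trivalent-spine diagrams. By the unoriented theorem their diagrams are related by the full R0--R6 list; the task is to upgrade this to a sequence using only R2--R3 and R5--R6 in $S^2$. I would argue that working on $S^2$ rather than $\mathbb{R}^2$ absorbs exactly the freedom previously supplied by R0 (planar isotopy passing a strand over the point at infinity) and R$\omega$, and that the orientation-preservation hypothesis lets us avoid R1 and R4, which are the moves that would force a change of twisting or of the global side-labelling. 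Concretely, whenever the unoriented derivation calls for an R1 or R4 move, the orientation constraint together with an R$\omega$/R0-style adjustment on $S^2$ must allow that step to be replaced by a combination drawn from the permitted list.

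The remaining implication (c) $\Rightarrow$ (b) is a translation between the ``$S^2$'' formulation and the ``$\mathbb{R}^2$'' formulation. By Remark \ref{REM:curl_obtained_from_R0_R1}, an R$\omega$ move is realized by R0 and R1 moves, but here the point is the converse direction of bookkeeping: a move performed in $S^2$ differs from the same move performed in $\mathbb{R}^2$ only by possibly sliding a strand across $\infty$, and each such slide across the point at infinity can be simulated in $\mathbb{R}^2$ by an R$\omega$ move. Thus any $S^2$-sequence using R2--R3 and R5--R6 is converted into an $\mathbb{R}^2$-sequence at the cost of inserting R$\omega$ moves, giving exactly the list in (b).

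I expect the main obstacle to be the orientation-tracking inside (a) $\Rightarrow$ (c): one must verify that the spine-level manipulations furnished by Theorem \ref{THM:Luo}, when lifted back to band diagrams, can always be realized without an R1 or R4 move, since those are exactly the moves that could either create a twisted band (destroying orientability of the diagrammatic data) or reverse the front/back convention. Handling the point at infinity carefully—ensuring that every appeal to the $S^2$ setting is matched by a legitimate R$\omega$ substitution and that no orientation reversal is smuggled in during a slide across $\infty$—is the delicate bookkeeping that the whole argument hinges upon.
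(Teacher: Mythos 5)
Your high-level architecture is reasonable and partly matches the paper: the soundness direction is handled by observing that each permitted move induces an orientation-preserving ambient isotopy, and the completeness direction is reduced to the unoriented machinery of Theorem \ref{THM:main_non_oriented} (hence to Lemmas \ref{LEM:step1(simple_spine_into_complicated_spine)} and \ref{LEM:Step2(Luo_spatial_surface_version)} and ultimately Theorem \ref{THM:Luo}). Two remarks on the architecture itself. First, you chose the cycle (a)$\Rightarrow$(c)$\Rightarrow$(b)$\Rightarrow$(a), which forces you to prove (c)$\Rightarrow$(b), i.e.\ to convert an $S^2$-sequence into an $\mathbb{R}^2$-sequence by simulating each slide across $\infty$ with an R$\omega$ move; this is the harder direction of the $S^2$/$\mathbb{R}^2$ comparison and you only assert it. The paper orients the cycle the other way, (a)$\Rightarrow$(b)$\Rightarrow$(c)$\Rightarrow$(a), so that the only $S^2$/$\mathbb{R}^2$ statement needed is the easy one: an R$\omega$ move is realized by R2--R3 and R5 moves on $S^2$ (Lemma \ref{LEM:easy_replacement}~(2)). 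You should either adopt that orientation of the cycle or actually prove your $\infty$-crossing claim.

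The genuine gap is in your completeness step. You correctly identify that the output of the unoriented argument is a sequence of R0--R5 moves between two diagrams with no bivalent vertices, and that the problem is to eliminate R0, R1 and R4; but you then write that the orientation constraint ``must allow that step to be replaced by a combination drawn from the permitted list,'' which is the conclusion, not an argument. The paper needs three concrete mechanisms here, none of which appear in your proposal. (i) A parity statement: because $F$ and $F'$ carry the same orientation, R4 moves occur an \emph{even number of times at each trivalent vertex} of the sequence; this is the precise way the orientation hypothesis enters, and Lemma \ref{LEM:no_R4} then trades each such even pack of R4$^-$ moves for R0--R3 and R5 moves. (ii) A framing invariant $f(e)=2(p(e)-n(e))+s(e)$ on edges (Definition \ref{DEF:framing}, Lemma \ref{LEM_flaming}), preserved by R0--R3, which is what guarantees that when the teardrop curls created by R1 moves are gathered on each edge they occur in cancelling $\pm$ pairs, removable by R$\omega$ and R2--R3 (Lemmas \ref{LEM:easy_replacement}~(1) and \ref{LEM_R1_vanish}). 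Without this invariant there is no reason the leftover curls should cancel. (iii) The bookkeeping devices (``small bivalent disks'' and ``small trivalent disks'' in Propositions \ref{LEM:if_it_has_no_R4} and \ref{LEM:main}) that defer the R1 and R4 moves to the end of the sequence so that (i) and (ii) can be applied. Your proposal names the obstacle but supplies none of these ideas, so as written the key implication is unproven.
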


The proofs of Theorems~\ref{THM:main_non_oriented} and \ref{THM:main_oriented} are written in Section~\ref{SEC:proof}.
When we deal with a spatial surface that contains closed components, we consider a spatial surface with boundary that is obtained by removing an open disk in each closed component.
Details are described in Section~\ref{SEC:non_split_spatial_surface}.

\begin{rmk}{\rm
In~\cite{ishii2008}, Ishii established Reidemeister moves of handlebody-links.
The Reidemeister moves of handlebody-link is the R1--R6 moves in Fig.~\ref{FIG:spatial_surface_Reidemeister_move} by omitting all signs.
We obtain the Reidemeister moves of oriented spatial surfaces by removing R1 and R4 moves of that of handlebody-links. 
Then, we also call an oriented spatial surface a \textit{framed handlebody-link}.
}\end{rmk}

\section{The proofs of Theorems~\ref{THM:main_non_oriented} and \ref{THM:main_oriented}}\label{SEC:proof}
Throughout Section~\ref{SEC:proof}, a bivalent vertex $v$ of a spatial surface diagram $D:=(D,s)$ is denoted by a black (resp.~white) vertex if $s(v)=+1$ (resp.~$s(v)=-1$), see Fig.~\ref{FIG:other_twist_band}.
A spatial surface $\operatorname{Sf}(D)$ is already defined in Definition~\ref{DEF:sign_of_spatial_graph}.
However, in Section~\ref{SEC:proof}, we replace each bivalent vertex with the band as illustrated in Fig.~\ref{FIG:other_twist_band} to avoid complication of figures: twisted bands in Fig.~\ref{FIG:other_twist_band} and Fig.~\ref{FIG:construction_of_spatial_surface} are ambient isotopic in a local area.
		\begin{figure}[htbp]\centerline{
		\includegraphics{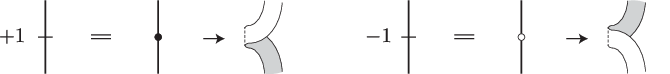}}
		\caption{A black/white vertex and a twisted band.}
		\label{FIG:other_twist_band}
		\end{figure}
\par
We prepare some notations used in proofs.
Let $D$ and $G$ be a spatial surface diagram and a trivalent spine of $F:=\operatorname{Sf}(D)$, respectively.
A proper arc $\alpha$ in $F$ is a {\it corner-arc} if the restriction $\mathrm{pr}|_{N_p}$ is not injective for any neighborhood $N_p$ of any point $p\in\alpha$, see Fig.~\ref{FIG:corner_arc_definition}.
We denote by $A_F$ the disjoint union of corner-arcs of $F$.
The pair $(G;F)$ is in {\it semiregular position} if
\begin{itemize}
\item $G$, which is a spatial graph, is in semiregular position (see Section~\ref{SEC:spatial_surface}), and
\item $G \cap A_F$ is finite, that is, $G$ has intersection with $A_F$ at finitely many points.
\end{itemize}
		\begin{figure}[htbp]\centerline{
		\includegraphics{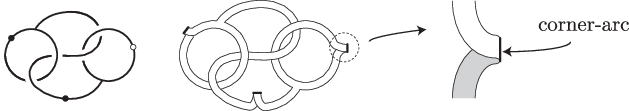}}
		\caption{Thick proper arcs mean corner-arcs of $\operatorname{Sf}(D)$.}
		\label{FIG:corner_arc_definition}
		\end{figure}
\par
When $(G;F)$ is in semiregular position, $\mathrm{pr}(G)$ has no triple points, since $G\subset F$ and $\#(F\cap\mathrm{pr}^{-1}(p))\le 2$ for any $p\in\mathbb R^2$.
Suppose that $(G;F)$ is in semiregular position.
A point $p\in A_F\cap G$ is {\it transversal} if $p$ is not a trivalent vertex of $G$ and $A_F$ and $G$ intersect transversally at $p$.
A point $p\in A_F\cap G$ is {\it non-transversal} if $p$ is not transversal.
A trivalent vertex is non-transversal if it is in $A_F$.
The pair $(G;F)$ is in {\it regular position} if
\begin{itemize}
\item $(G;F)$ is in semiregular position,
\item $G$ is in regular position (see Section~\ref{SEC:spatial_surface}), and
\item any point in $A_F\cap G$ is transversal.
\end{itemize}
\par
Suppose that $(G;F)$ is in regular position.
We explain a way to construct a spatial surface diagram $D(G;F)$. 
First, we replace every point in $G\cap A_F$ with a bivalent vertex.
We regard $G$ as the spatial 2,3-graph obtained by adding bivalent vertices.
Next, we define a sign $s(G;F)$ of $D(G)$.
We fix a ``thin''  regular neighborhood $N_G$ of $G$ in $F$;
$N_G$ and $F$ are ambient isotopic in $\mathbb R^3 \subset S^3$.
For every bivalent vertex $v$ in $D(G)$, which is on a corner-arc of $F$, we define $s(G;F)(v)$ by observing a peripheral area around $v$ as depicted in Fig.~\ref{FIG:induced_sign_definition}.
Then, we have the spatial surface diagram $(D(G), s(G;F))$, denoted by $D(G;F)$ hereafter.
		\begin{figure}[htbp]\centerline{
		\includegraphics{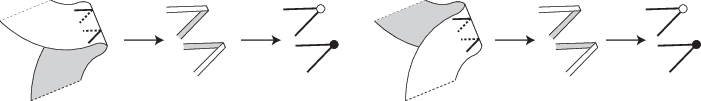}}
		\caption{Procedure for obtaining the sign $s(G;F)$ of $D(G)$.}
		\label{FIG:induced_sign_definition}
		\end{figure}
\par
We also write $\operatorname{Sf}(G;F)=\operatorname{Sf}\bigl(D(G;F)\bigr)$.
We often assume that  $\operatorname{Sf}(G;F)$ is in $F$, that is, we identify $\operatorname{Sf}(G;F)$ with $N_G$.
\par
The lemma below claims that any spatial surface can be presented by spatial surface diagrams.

\begin{lemma}\label{LEM:spatial_surface_has_diagram}
Let $F$ be an spatial surface.
\begin{enumerate}
\item[(1)]
There is a spatial surface diagram $D$ such that $\operatorname{Sf}(D)$ and $F$ are ambient isotopic in $S^3$.
\item[(2)]
If $F$ is orientable and oriented, then there is a spatial surface diagram $D$ with no bivalent vertices such that $\overrightarrow{\operatorname{Sf}}(D)$ and $F$ are ambient isotopic in $S^3$ including orientations.
\end{enumerate}
\end{lemma}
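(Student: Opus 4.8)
The plan is to prove both parts by reducing the problem to the construction $D(G;F)$ already set up in the excerpt, so that the only substantive work is arranging $F$ into a position where that construction applies. First I would put $F$ in a convenient position with respect to the projection $\mathrm{pr}$: after a small ambient isotopy I may assume $F \subset \mathbb{R}^3$ and that the set $A_F$ of corner-arcs is a finite disjoint union of proper arcs, with the ``vertical'' folds of $F$ occurring only along $A_F$. Since $F$ has boundary, I next choose a trivalent spine $G$ of $F$; such a spine exists because $F$ has no disk components (disks were excluded globally, and the annulus/M\"obius cases are handled by the circle spine noted after Theorem~\ref{THM:Luo}). By a further small perturbation I arrange that the pair $(G;F)$ is in regular position in the sense defined just above the lemma: $G$ is in regular position as a spatial graph, $G\cap A_F$ is finite, and every intersection point of $G$ with $A_F$ is transversal. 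This is a standard general-position argument, perturbing $G$ within $F$ and then perturbing the embedding slightly in the $z$-direction.

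With $(G;F)$ in regular position, I would simply invoke the construction $D(G;F)=(D(G),s(G;F))$ defined in the preceding paragraphs of the excerpt. The key point for part (1) is that $\operatorname{Sf}(G;F)$ is by construction identified with a thin regular neighborhood $N_G$ of $G$ in $F$, and by the definition of a spine, $N_G$ is ambient isotopic in $F$ (hence in $S^3$) to $\operatorname{cl}_F(F\setminus N_{\partial F})$, which is in turn ambient isotopic to $F$ itself. Therefore $D:=D(G;F)$ satisfies $\operatorname{Sf}(D)\cong F$ up to ambient isotopy in $S^3$, which is exactly claim (1). The main thing to verify carefully here is that the signs $s(G;F)(v)$ assigned at each bivalent vertex (each transversal point of $G\cap A_F$), read off from the local picture in Figure~\ref{FIG:induced_sign_definition}, correctly reproduce the twisting of the band of $N_G$ along each corner-arc; that is, that the local model for $\operatorname{Sf}$ at a black/white vertex matches the actual band of $F$ there.

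For part (2), suppose $F$ is orientable and oriented. The additional requirement is to produce a diagram with \emph{no} bivalent vertices, i.e.\ with no twisted bands, whose surface $\overrightarrow{\operatorname{Sf}}(D)$ recovers $F$ with its orientation. The plan is to choose the spine $G$ and its neighborhood $N_G$ so that $N_G$ is \emph{untwisted}, exploiting orientability: because $F$ is orientable, I can fix a normal framing (a choice of ``front side'' facing the positive $z$-direction), and I can isotope $G$ within $F$ so that every corner-arc crossing contributes an untwisted half-twist-free band, eliminating all bivalent vertices from $D(G)$. Concretely, orientability lets me orient $N_G$ compatibly so that the construction of Figure~\ref{FIG:oriented_spatial_surface} applies, and the front/back convention there matches the given orientation of $F$. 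I expect the main obstacle to lie precisely in this step: showing that the orientation can be used to cancel all the sign data, i.e.\ that one can always choose the spine's regular neighborhood to be embedded without twisted bands and compatibly oriented, rather than merely that the signs are constrained. Once that is established, setting $D:=D(G;F)$ (now with $s(G;F)=\emptyset_{\{\pm1\}}$) gives a bivalent-vertex-free diagram with $\overrightarrow{\operatorname{Sf}}(D)\cong F$ as oriented surfaces, completing the proof.
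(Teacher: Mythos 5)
Your part (1) is essentially the paper's argument: choose a trivalent spine $G$, put $(G;F)$ in regular position by small perturbations, and read off $D(G;F)$; the identification $\operatorname{Sf}(G;F)=N_G\simeq F$ then gives the claim. That much is fine.

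Part (2) has a genuine gap, and it is exactly the step you flag as ``the main obstacle.'' You propose to isotope $G$ within $F$ (or to choose $N_G$ well) so that every band is \emph{untwisted}, thereby eliminating all bivalent vertices. This cannot work in general: the twisting of the bands of $N_G$ relative to the projection is forced by the embedding of $F$ in $S^3$, not by the choice of spine. For instance, an annulus embedded as a band with one full twist has a circle as its only spine up to isotopy, and no choice of regular neighborhood projects without twisting unless you pay for it with a crossing. The paper's proof of (2) resolves this with two ideas you do not supply. First, orientability is used quantitatively: after isotoping $F$ in $S^3$ so that the peripheral regions of all trivalent vertices face the positive $z$-direction, each band must carry an \emph{even} number of half-twists (an odd number would be incompatible with a coherent orientation), so the sign data organizes into full twists. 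Second, each full twist is not removed but \emph{converted}: by an isotopy of $S^3$ fixing the vertex neighborhoods, a full twist is traded for a teardrop-like curl (Figure \ref{FIG:full_twist_to_curl}), i.e.\ a self-crossing of the diagram, after which no bivalent vertices remain. Without the parity argument and the full-twist-to-curl conversion, your outline for (2) does not close.
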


\begin{proof}
We describe an example of ways to construct a diagram of $F$.
\par
We prove (1).
By perturbing $F$ slightly, we deform $F$ so that $U\cap C\ne\emptyset$ and $\mathrm{pr}|_{U}$ is injective for some open subset $U$ of $F$ and each connected component $C$ of $F$.
We fix an arbitrary trivalent spine $G$ of $F$.
By an isotopy of $F$, we deform $G$ so that every trivalent vertex of $G$ is contained in $U$.
We perturb the z-axis of the canonical projection $\mathrm{pr}$ slightly so that $G$ is in regular position.
Next, we take a thin enough neighborhood $N_G$ of $G$ in $F$.
If necessary, we deform any twisted part of $N_G$ into the configuration in Fig.~\ref{FIG:other_twist_band} so that $(G;N_G)$ is in regular position.
Since $N_G$, $F$ and $\operatorname{Sf}(G;N_G)$ are ambient isotopic, $D(G;N_G)$ is a diagram of $F$.
\par
We prove (2).
By the consequence of (1), there is a spatial surface diagram $D:=(D,s)$ such that $\operatorname{Sf}(D)$ and $F$ are ambient isotopic, where we forget the orientation of $F$.
We regard $F$ as $\operatorname{Sf}(D)$.
Let $G$ be a 2,3-spine of $F$ such that $(D(G),s(G;F))=D$.
By an isotopy of $S^3$, we deform the oriented surface $F$ so that any front side of peripheral regions around trivalent vertices of $G$ faces the positive direction of the z-axis.
By an isotopy of $S^3$ that keeps any peripheral region of trivalent vertices fixed, we deform $F$ so that $(G;F)$ is in regular position and $F$ has no corner-arcs:
all full twisted bands are deformed as illustrated in Fig.~\ref{FIG:full_twist_to_curl}.
Then, we obtain a spatial surface diagram $D'=D(G;F)$ with no bivalent vertices such that $\overrightarrow{\operatorname{Sf}}(D')$ and $F$ are ambient isotopic in $S^3$ including orientations.
\end{proof}
		\begin{figure}[htbp]\centerline{
		\includegraphics{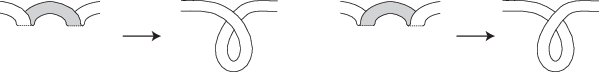}}
		\caption{A full twist band is deformed into a teardrop-like band.}
		\label{FIG:full_twist_to_curl}
		\end{figure}

\subsection{The proof of Theorem~\ref{THM:main_non_oriented}}\label{SEC:non_oriented}
For also spatial trivalent graph diagrams, Reidemeister moves are defined as illustrated in Fig.~\ref{FIG:spatial_graph_Reidemeister}
(cf.~\cite{kauffman1989}).
Two spatial trivalent graph diagrams are related by Reidemeister moves if and only if the two spatial graphs are ambient isotopic in $S^3$.
We use the same symbols as the spatial surface diagrams; an R2, R3 or R5 move of Fig.~\ref{FIG:spatial_graph_Reidemeister} is the same as that of Fig.~\ref{FIG:spatial_surface_Reidemeister_move}.
\par
For a spatial surface diagram $D$, let us  denote by $\overline{D}$ the spatial trivalent graph diagram that is obtained from $D$ by forgetting its bivalent vertices.
A spatial surface $F$ is {\it well-formed} if $F=\operatorname{Sf}(D)$ for some spatial surface diagram $D$, where this notation is used only in Lemma~\ref{LEM:step1(simple_spine_into_complicated_spine)}.
		\begin{figure}[htbp]\centerline{
		\includegraphics{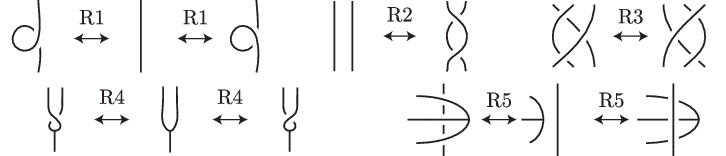}}
		\caption{Reidemeister moves for spatial trivalent graph diagrams.}
		\label{FIG:spatial_graph_Reidemeister}
		\end{figure}
\par
Let $\{h_t:\mathbb R^3\to\mathbb R^3\}_{t\in[0,1]}$ be an isotopy with $h_0=\mathrm{id}|_{\mathbb R^3}$, and let $G$ be a spatial graph.
For $c\in[0,1]$, the image $\mathrm{pr}(h_c(G))$ has a \textit{critical} point $p\in\mathrm{pr}(h_c(G))$ if there is a small open interval $R$ with $c\in R\subset[0,1]$ such that
\begin{itemize}
\item $h_r(G)$ is in regular position with respect to $\mathrm{pr}$ for any $r\in R$, and
\item a transversal crossing of projections of $G$ occurs or disappears around $p$ before and after $c$ as illustrated in Fig.~\ref{FIG:critical}.
\end{itemize}
		\begin{figure}[htbp]\centerline{
		\includegraphics{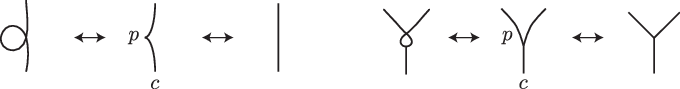}}
		\caption{A critical point $p\in\mathrm{pr}(h_c(G))$.}
		\label{FIG:critical}
		\end{figure}

\begin{lemma}
\label{LEM:step1(simple_spine_into_complicated_spine)}
Let $D$ be a spatial surface diagram, and we put $F=\operatorname{Sf}(D)$.
We denote by $G$ the trivalent spine of $F$ satisfying $D(G;F)=D$.
Let $\{h_t:\mathbb R^3\to\mathbb R^3\}_{t\in[0,1]}$ be an isotopy with $h_0=\mathrm{id}|_{\mathbb R^3}$ such that
$h_1(F)$ is well-formed and $(h_1(G);h_1(F))$ is in regular position.
Then, $D$ and $D(h_1(G); h_1(F))$ are related by R0--R5 moves in $\mathbb R^2$.
\end{lemma}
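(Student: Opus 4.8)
Let me understand what Lemma 3.11 (LEM:step1...) claims. We have a spatial surface diagram $D$, with $F = \operatorname{Sf}(D)$ and $G$ the trivalent spine satisfying $D(G;F) = D$. We're given an isotopy $h_t$ of $\mathbb{R}^3$ with $h_0 = \text{id}$, and $h_1(F)$ is well-formed with $(h_1(G); h_1(F))$ in regular position. We need to show that $D$ and $D(h_1(G); h_1(F))$ are related by R0–R5 moves in $\mathbb{R}^2$.

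So we have an ambient isotopy carrying the whole surface $F$ (and its spine $G$) to a new position. The diagram $D$ is the diagram associated to $(G;F)$ via the construction described earlier (replacing corner-arc intersections with bivalent vertices, assigning signs, etc.). The diagram $D(h_1(G); h_1(F))$ is the analogous diagram for the image.

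The claim is that these two diagrams are related by R0–R5 moves. Note: NOT R6. So R6 (whatever it is — likely a move involving bivalent vertices/twists more substantially) is excluded here.

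**The structure of the argument.**

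This is a "movie" argument. An ambient isotopy of $\mathbb{R}^3$ can be decomposed into a sequence of "elementary" isotopies, and we track how the diagram (the projection plus crossing/sign data) changes during a generic such isotopy.

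The key idea: the spine $G$ is a spatial trivalent graph. The diagram $D$ is built from two pieces of data:
1. The spatial trivalent graph diagram $\overline{D} = D(G)$ (projection of $G$ with crossing info).
2. The sign function $s(G;F)$ on bivalent vertices, where bivalent vertices sit at corner-arc crossings of $F$.

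Since $G$ is a spatial graph, the already-known theory of Reidemeister moves for spatial trivalent graph diagrams (Figure, cf. Kauffman) tells us that the diagrams $D(G)$ and $D(h_1(G))$ are related by spatial-graph Reidemeister moves (R2, R3, R5 here, plus R1/R0 type moves). That's the "graph part." The subtlety is the "surface/band part": as the surface moves, the corner-arcs $A_F$ move, bivalent vertices appear/disappear/slide, and the signs can change. We must show all these band-phenomena are accounted for by R0–R5 moves.

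**The plan.**

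My plan would be as follows.

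First, I would make the isotopy $\{h_t\}$ generic with respect to the projection $\mathrm{pr}$. That is, perturb $h_t$ (relative to the endpoints $t=0,1$) so that the one-parameter family $h_t(G)$ is in regular position for all but finitely many $t$, and at those finitely many exceptional times exactly one "elementary event" happens. This is the standard genericity argument: for a one-parameter family of spatial graph diagrams the codimension-1 singularities are (a) a triple point (R3), (b) a tangency of two strands (R2), (c) a cusp/inflection giving a kink (R1), (d) a strand passing through a vertex (R5/R4-type vertex moves). For the surface version we get additional events coming from the corner-arc structure $A_F$, since $D$ records bivalent vertices at $G \cap A_F$.

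Second, I'd enumerate the elementary events for $(h_t(G); h_t(F))$ and check, event by event, that the diagram $D(h_t(G); h_t(F))$ changes by R0–R5 moves (never R6). The events fall into two families. The "graph-only" events, where $G$ moves relative to itself and the projection, give the usual spatial-trivalent-graph Reidemeister moves R2, R3, R5, and the kink-type R0/R1 moves. The "surface" events, where the geometry of the corner-arcs $A_F$ changes relative to $G$, are where bivalent vertices are born, die, or slide past vertices/crossings, and where signs are assigned. Concretely: a point of $G$ crossing a corner-arc transversally creates or destroys a bivalent vertex, and one must verify this corresponds to an R0 (or R1-type) move; a bivalent vertex sliding along an edge is an isotopy of the diagram (R0); a bivalent vertex passing a crossing or another bivalent vertex should be an R2/R3/R4-type move. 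The sign $s(G;F)(v)$ is determined locally by the peripheral picture (Figure induced-sign), so I'd check that during each event the sign assigned by the recipe is consistent with the R-move being applied.

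Third, I'd verify the bookkeeping: since at $t=0$ we have exactly $D$ and at $t=1$ we have exactly $D(h_1(G);h_1(F))$, and each elementary event is one R0–R5 move (or isotopy of the plane diagram, which is R0), concatenating these gives a finite R0–R5 sequence relating $D$ to $D(h_1(G);h_1(F))$.

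**The main obstacle.**

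The hard part, I expect, is the careful analysis of the surface-specific events — in particular, making sure that as corner-arcs $A_F$ sweep across $G$, the bivalent vertices that get created/annihilated and the signs they receive are always realizable by R0–R5 and never require R6. The delicate point is that a full twist in a band versus two opposite half-twists, and the way signs combine, must be controlled; the paper itself flags that "we need delicate consideration to avoid difficulty and complexity about information of twisted bands." I would expect the genuine content to be a finite case-check of local pictures showing that every elementary band-event is an R0–R5 move, with the sign assignment recipe of Figure induced-sign guaranteeing consistency. The exclusion of R6 is the crux: R6 presumably handles a global/sign phenomenon that cannot arise from an honest ambient isotopy that already keeps $(h_1(G);h_1(F))$ in regular position, so one must confirm no event in a generic isotopy forces it.
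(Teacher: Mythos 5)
Your high-level plan (make the isotopy generic, decompose it into elementary events, and check each event is an R0--R5 move) matches the flavour of the paper's argument, but there is a genuine gap at the centre of your proposal: you plan to track the diagram $D(h_t(G);h_t(F))$ and in particular the ``surface events'' where the corner-arcs $A_{h_t(F)}$ sweep across $G$. For intermediate $t$ the surface $h_t(F)$ is in general \emph{not} well-formed --- it need not be of the form $\operatorname{Sf}(D')$ for any diagram $D'$, and its projection can be folded over itself in arbitrarily complicated ways --- so the corner-arc structure, the bivalent vertices, the signs, and hence the diagram $D(h_t(G);h_t(F))$ are simply not defined during the isotopy. (This is exactly the difficulty the introduction flags; the enumeration of corner-arc crossing events is legitimate only in the situation of Lemma \ref{LEM:Step2(Luo_spatial_surface_version)}, where the ambient surface $F$ is held fixed and only the spine moves inside it.) Consequently the step ``concatenate the elementary events to get from $D$ to $D(h_1(G);h_1(F))$'' does not go through as stated.

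The paper's proof avoids this by never looking at the intermediate surfaces. It perturbs the projection so that only the spatial \emph{graph} $G_t=h_t(G)$ has finitely many singular times, takes the resulting movie $E_t=D(G_t)$ of spatial trivalent graph diagrams, and then \emph{constructs by hand} a decorated movie $D_t$ with $\overline{D_t}=E_t$, inserting compensating bivalent vertices whenever the graph movie performs an R1- or R4-type move so that consecutive $D_t$ are related by R0--R5 (Figure \ref{FIG:proof_replace_01}). This produces some surface diagram $D_1$ at time $1$, which a priori need not equal $D(h_1(G);h_1(F))$; a separate reconciliation argument is then needed, comparing the two diagrams edge by edge inside a thin tubular neighbourhood of $G_1$ and showing the sums of signs along each edge agree, so that they differ only by R0 moves. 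Your proposal is missing both of these ingredients: the device of tracking only the spine and decorating its diagram, and the endpoint comparison (which is where the ``delicate consideration about twisted bands'' actually lives). Your closing remark about R6 is also slightly off target: R6 is the diagrammatic IH-move, and it is absent here simply because the spine is transported by the isotopy rather than replaced, not because of a sign phenomenon.
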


\begin{proof}
For any subset $X$ in $\mathbb R^3$ and any $t\in [0,1]$, we write $X_t=h_t(X)$.
We slightly perturb the isotopy or the z-axis of the canonical projection $\mathrm{pr}$ so that the following conditions are satisfied:
\begin{itemize}
\item $G_t$ is in semiregular position for any $t\in[0,1]$,
\item $S:=\{s\in[0,1]\mid\text{$G_s$ is not in regular position}\}$ is finite,
\item $C:=\{c\in[0,1]\mid\text{$\mathrm{pr}(G_c)$ has a critical point}\}$ is finite and $C\subset[0,1]\setminus S$,
\item if $s\in S$, then $\mathrm{pr}(G_s)$ has exactly one non-regular multiple point,
\item if $c\in C$, then $\mathrm{pr}(G_c)$ has exactly one critical point.
\end{itemize}
We note that $G_r$ is in regular position for any $r\in R:=[0,1]\setminus S$.
For any $r \in R$, we write $E_r=D(G_r)$, which is a spatial trivalent graph diagram.
Note that $E_0=\overline{D}$ and $E_1=\overline{D(h_1(G); h_1(F))}$.
Let $t_1$, $t_2$, \ldots, $t_{n}$ be all points in $S\sqcup C$ such that $t_1<\cdots<t_n$, where we put $n=\#(S\sqcup C)$.
We put $R_i=\{r\in[0,1]\mid t_i<r<t_{i+1}\}$ for any integer $i$ with $0\le i\le n$, where we set $t_0=0$ and $t_{n+1}=1$.
\par
By adding bivalent vertices and signs to the sequence $(E_r)_{r\in R\setminus C}$ of spatial trivalent graph diagrams, we will construct a sequence $(D_r)_{r\in R\setminus C}$ of spatial surface diagrams with $D=D_0$ such that $D_{r_i}$ and $D_{r_{i+1}}$ are related by R0--R5 moves for any $i$ with $1\le i <n$ and any $r_{i}\in R_{i}$ and $r_{i+1}\in R_{i+1}$.
Furthermore, we will show that $D_1$ and $D(h_1(G);h_1(F))$ are related by R0 moves; then the proof will complete.
\par
First, we recursively construct a sequence $(D_r)_{r\in R\setminus C}$ of spatial surface diagrams satisfying $\overline{D_r}=E_r$ for any $r\in R\setminus C$.
We set $D_0=D$.
We define $D_{r_0}$ for any $r_0\in R_0$ as follows: $D_{r_0}$ and $D_0$ are related by an isotopy of $\mathbb R^2$ and $\overline{D_{r_0}}=E_{r_0}$.
Suppose that $(D_r)_{r\in R_0\cup\cdots\cup R_i}$ is already defined.
We define $(D_r)_{r\in R_0\cup\cdots\cup R_{i+1}}$ by the following procedures.
\par
{\bf Case 1:}
we consider the case that $t_{i+1}\in C$, that is, an R1 (resp.~R4) move of Fig.~\ref{FIG:spatial_graph_Reidemeister} is applied before and after $t_{i+1}$ in $(E_r)_{r\in R\setminus C}$.
Let $\delta$ be a small region in which the move is applied.
If a bivalent vertex of the corresponding spatial surface diagram is contained in $\delta$, we move the vertex on the outside of $\delta$ by using R0 moves.
By adding bivalent vertices, we define $D_{r_{i+1}}$ for any ${r_{i+1}}\in R_{i+1}$ as illustrated in Fig.~\ref{FIG:proof_replace_01}:
R0 and R1 (resp.~R4) moves of Fig.~\ref{FIG:spatial_surface_Reidemeister_move} are applied before and after $t_{i+1}$, $\overline{D_{r_{i+1}}}=E_{r_{i+1}}$, and $D_{r_{i+1}}$ and $D_{r_{i+1}'}$ are related by R0 moves and an isotopy of $\mathbb R^2$ for any $r_{i+1}'\in R_{i+1}$.
The case of applying the right R1 (resp.~R4) move of Fig.~\ref{FIG:spatial_graph_Reidemeister} is omitted.
		\begin{figure}[htbp]\centerline{
		\includegraphics{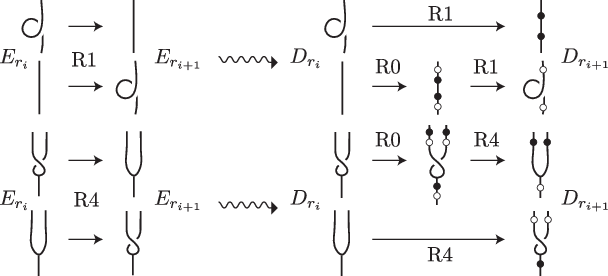}}
		\caption{Applying R0 and R1 (resp.~R4) moves of spatial surface diagrams.}
		\label{FIG:proof_replace_01}
		\end{figure}
\par
{\bf Case 2:}
we consider the case that $t_{i+1}\in S$, that is, a move except for both R1 and R4 moves of Fig.~\ref{FIG:spatial_graph_Reidemeister} is applied before and after $t_{i+1}$ in $(E_r)_{t\in R\setminus C}$.
If a bivalent vertex of the corresponding spatial surface diagram is contained in $\delta$, we move it on the outside of $\delta$ by R0 moves.
By adding no bivalent vertices, we define $D_{r_{i+1}}$ for any ${r_{i+1}}\in R_{i+1}$ as follows:
$D_{r_{i+1}}$ is equal to $E_{r_{i+1}}$ in $\delta$, $\overline{D_{r_{i+1}}}=E_{r_{i+1}}$, and $D_{r_{i+1}}$ and $D_{r_{i+1}'}$ are related by R0 moves and an isotopy of $\mathbb R^2$ for any $r_{i+1}'\in R_{i+1}$.
Before and after $t_{i+1}$, a move that is not illustrated in Fig.~\ref{FIG:spatial_graph_Reidemeister} may be applied, however, $D_{r_{i}}$ and $D_{r_{i+1}}$ are actually related by R2, R3 and R5 moves;
two examples of a move that is not illustrated in Fig.~\ref{FIG:spatial_graph_Reidemeister} are depicted in Fig.~\ref{FIG:proof_other}.
		\begin{figure}[htbp]\centerline{
		\includegraphics{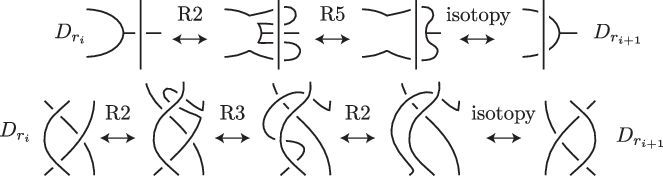}}
		\caption{Deformations by using R2, R3 and R5.}
		\label{FIG:proof_other}
		\end{figure}
\par
By the processes, we have $(D_r)_{r\in R\setminus C}$ of spatial surface diagrams satisfying that $\overline{D_r}=E_r$ for any $r\in R\setminus C$; then $D$ and $D_1$ are related by R0--R5 moves.
\par
Secondly, we show that $D_1$ and $D(h_1(G);h_1(F))$ are related by R0 moves, where we note that $D(G_1;F_1)=D(h_1(G);h_1(F))$.
We take a thin enough regular neighborhood $V$ of $G$ in $\mathbb R^3$ so that $F_t':=V_t\cap F_t$ is a regular neighborhood of $G_t$ in $F_t$ for any $t\in[0,1]$.
Now, $V_t$ is a disjoint union of handlebodies embedded in $\mathbb R^3$, and $F_t'$ is a spatial surface properly embedded in $V_t$, that is, $\partial F_t'\subset\partial V_t$ and $\operatorname{int}F_t'\subset\operatorname{int}V_t$.
\par
Although each $F_t'$ is properly embedded in $V_t$, we regard that each $\operatorname{Sf}(D_t)$ is also properly embedded in $V_{t}$ and that $\operatorname{Sf}(D_t)$ contains $G_t$ for any $t\in I$, where we note that $G_t$ is also a spine of $\operatorname{Sf}(D_t)$.
We regard that $F_1'=\operatorname{Sf}(G_1;F_1)$.
By the construction of $(\operatorname{Sf}(D_t))_{t\in I}$, the spatial surfaces $\operatorname{Sf}(D_1)$ and $F_1'$ are related by an isotopy $\{\varphi_t: V_1\to V_1\}_{t\in[0,1]}$ satisfying that $\varphi_0=\mathrm{id}|_{V_1}$, $\varphi_1(\operatorname{Sf}(D_1))=F_1'$ and $\varphi_t(x)=x$ for any $t \in I$ and any $x\in G_1$.
Let $e$ be an edge of $G_1$.
Since $\{\varphi_t\}_{t\in[0,1]}$ keeps $G_1$ fixed, then there exist bands $B:=e\times J\subset\operatorname{Sf}(D_1)$ and $B':=e\times J'\subset F_1'$ such that $\varphi_1(B)=B'$ and $e\subset\partial e\times J=\partial e\times J'\subset G_1$, where each of $J$ and $J'$ is a closed interval.
An example of $B$ is illustrated in Fig.~\ref{FIG:proof_thin_band}.
		\begin{figure}[htbp]\centerline{
		\includegraphics{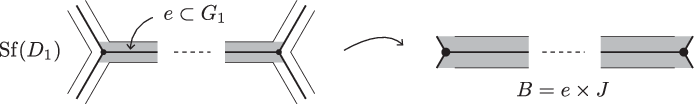}}
		\caption{A band $B=e\times J$, where $J$ is a segment homeomorphic to $[0,1]$.}\label{FIG:proof_thin_band}
		\end{figure}
\par
The sum of the sign of vertices on $\mathrm{pr}(e)$ in $D_1$ is equal to that in $D(G_1;F_1)$.
Therefore, $D_1$ and $D(G_1;F_1)$ are related by R0 moves.
\end{proof}

\begin{lemma}\label{LEM:Step2(Luo_spatial_surface_version)}
Let $D$ be a spatial surface diagram, and put $F=\operatorname{Sf}(D)$.
Let $G$ be a trivalent spine of $F$ such that $(G;F)$ is in regular position.
Then,
\begin{enumerate}
\item[(1)] $D(G;F)$ and $D$ are related by R0--R2 and R4--R6 moves in $\mathbb R^2$, and
\item[(2)] $D(G;F)$ and $D$ are related by R2 and R5--R6 moves in $\mathbb R^2$, if $D$ has no bivalent vertices.
\end{enumerate}
\end{lemma}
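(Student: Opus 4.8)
The plan is to walk from the spine underlying $D$ to the spine $G$ along a path provided by Theorem~\ref{THM:Luo}, translating each elementary step into Reidemeister moves. Write $G_{0}$ for the trivalent spine of $F$ with $D(G_{0};F)=D$, as in Lemma~\ref{LEM:step1(simple_spine_into_complicated_spine)}. Since $G_{0}$ and $G$ are two trivalent spines of the same spatial surface $F$, Theorem~\ref{THM:Luo} gives a finite sequence $G_{0}=G^{(0)},G^{(1)},\dots,G^{(k)}=G$ in which consecutive spines differ by a single isotopy of $F$ or a single IH-move. After a generic perturbation of the projection direction I may assume that every $G^{(j)}$ is in regular position with respect to $\mathrm{pr}$ and that $(G^{(j)};F)$ is in regular position, so that each diagram $D_{j}:=D(G^{(j)};F)$ is defined, with $D_{0}=D$ and $D_{k}=D(G;F)$. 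It then suffices to relate $D_{j}$ and $D_{j+1}$ for each $j$.

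For an isotopy step I would promote the ambient isotopy $\{\phi_{t}\}$ of $F$ with $\phi_{1}(G^{(j)})=G^{(j+1)}$ to an ambient isotopy $\{h_{t}\}$ of $\mathbb{R}^{3}$ satisfying $h_{t}|_{F}=\phi_{t}$, using the isotopy extension theorem; then $h_{1}(F)=F$ is well-formed and $h_{1}(G^{(j)})=G^{(j+1)}$, so Lemma~\ref{LEM:step1(simple_spine_into_complicated_spine)} applies and shows that $D_{j}$ and $D(h_{1}(G^{(j)});h_{1}(F))=D_{j+1}$ are related by R0--R5 moves. For an IH-move step, the replacement is supported in a quadrilateral disk $\delta\subset F$ carrying an $H$- or $I$-shaped piece of the spine. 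Here I would first apply an isotopy, itself realized by R0--R5 moves as above, to bring $\delta$ into a flat, untwisted standard position in which the local picture of $D_{j}$ is exactly one side of the R6 move of Figure~\ref{FIG:spatial_surface_Reidemeister_move}; the IH-move is then precisely an R6 move. Concatenating the steps proves~(1).

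For~(2) I would run the same argument while recording the orientation and the bivalent vertices. When $D$ has no bivalent vertices, $F=\overrightarrow{\operatorname{Sf}}(D)$ is built from flat bands, so it has no fold lines, i.e.\ $A_{F}=\emptyset$; consequently $G^{(j)}\cap A_{F}=\emptyset$ and every $D_{j}$ is free of bivalent vertices. Choosing all the realizing isotopies to preserve the front/back convention and to keep the bands flat (neither twisted nor curled) forces the induced diagram moves to avoid R0, R1, R4 (which alter twisted bands) and R$\omega$ (which creates a curl), leaving only R2, R3, R5; each IH-move still contributes an R6 move, which preserves the absence of bivalent vertices. This yields the restricted move set R2--R3, R5--R6 of~(2).

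The step I expect to be the main obstacle is the IH-to-R6 translation: verifying that, after normalizing $\delta$ to standard position, the diagrammatic effect of the IH-move is exactly R6, with correct bookkeeping of the sign function $s(\,\cdot\,;F)$ and of any strands or crossings that must first be pushed out of $\delta$. For~(2) the delicate point is to guarantee that the normalizing and realizing isotopies can always be taken flat, so that no curl (R$\omega$) and no twisted band (R0, R1, R4) is ever forced.
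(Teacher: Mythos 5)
Your overall architecture agrees with the paper's: reduce via Theorem \ref{THM:Luo} to a single isotopy of $F$ or a single IH-move, handle the IH-move by first shrinking its supporting disk into a small standard region (an isotopy step) and then applying one R6 move. For part (1) your shortcut of handling each isotopy step by extending the isotopy of $F$ to an ambient isotopy of $\mathbb{R}^3$ (isotopy extension) and invoking Lemma \ref{LEM:step1(simple_spine_into_complicated_spine)} is legitimate and yields R0--R5 moves, hence R0--R6 overall. The paper does \emph{not} delegate to Lemma \ref{LEM:step1(simple_spine_into_complicated_spine)} here; it re-runs a genericity argument for the spine moving \emph{inside the fixed surface} $F$, classifying the critical instants into (i) an edge of $G_t$ crossing a corner-arc of $A_F$ (giving R0--R1), (ii) a trivalent vertex crossing a corner-arc (giving R0, R4), and (iii) a non-regular double point of $\mathrm{pr}(G_t)$ (giving R1 near a corner-arc, or R2, R3, R5 otherwise). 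That finer bookkeeping is exactly what the paper needs for part (2).

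This is where your proposal has a genuine gap. For (2) you correctly observe that $A_F=\emptyset$ when $D$ has no bivalent vertices, but you cannot conclude from Lemma \ref{LEM:step1(simple_spine_into_complicated_spine)} that the isotopy steps use only R2--R3 and R5: that lemma's statement only provides R0--R5, and its proof moves the whole surface through $\mathbb{R}^3$, where the projected spine genuinely develops kinks and strands pass over vertices, so the intermediate surface diagrams acquire bivalent vertices via R0 and R1 (resp.\ R4) moves by construction (Figure \ref{FIG:proof_replace_01}). Your remedy --- ``choosing all the realizing isotopies \ldots to keep the bands flat'' so that R0, R1, R4 are never forced --- is precisely the assertion that must be proved, and it is not available once you have black-boxed Lemma \ref{LEM:step1(simple_spine_into_complicated_spine)}. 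What makes it true is the paper's observation that, for an isotopy supported \emph{within} the fixed $F$, every occurrence of R0, R1 or R4 is localized at a corner-arc of $F$ (a cusp of $\mathrm{pr}(G_t)$ can only form where $\mathrm{pr}|_F$ fails to be a local embedding), so that $A_F=\emptyset$ kills all of them. To close the gap you should either carry out that direct analysis of $(D(G_t;F))_{t}$ as in the paper's Cases 1--3, or prove a refined version of Lemma \ref{LEM:step1(simple_spine_into_complicated_spine)} for isotopies preserving $F$ setwise; as written, part (2) does not follow.
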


\begin{proof}
We show (1).
Let $G_0$ be the spine of $F$ satisfying $D(G_0;F)=D$.
Since two trivalent spines are related by finitely many IH-moves and isotopy (Theorem~\ref{THM:Luo}), it suffices to show that $D$ and $D(G;F)$ are related by R0--R2 and R4--R6 moves moves if $G_0$ and $G$ are related by exactly one IH-move or an isotopy in $F$.
\par
First, we suppose that $G_0$ and $G$ are related by an isotopy in $F$; we show that $D$ and $D(G;F)$ are related by R0--R2 and R4--R5 moves moves.
Let $\{h_t:F\to F\}_{t\in[0,1]}$ be an isotopy of $F$ with $h_0=\mathrm{id}|_{F}$ such that $h_1(G_0)=G$.
Write $G_t=h_t(G_0)$ for any $t\in[0,1]$.
We slightly perturb the isotopy or the z-axis of $\mathrm{pr}$ so that the following conditions are satisfied:
\begin{itemize}
\item $(G_t;F)$ is in semiregular position for any $t\in[0,1]$,
\item $S:=\{s\in[0,1]\mid\text{$(G_s;F)$ is not in regular position}\}$ is finite,
\item $C:=\{c\in[0,1]\mid\text{$\mathrm{pr}(G_c)$ has a critical point}\}$ is finite and $C\subset[0,1]\setminus S$,
\item if $s\in S$, then
	\begin{itemize}
	\item $G_s$ has exactly one non-transversal intersection with $A_F$ and any double point of $\mathrm{pr}(G_s)$ is regular, otherwise
	\item $\mathrm{pr}(G_s)$ has exactly one non-regular double point and any intersection of $G_s$ with $A_F$ is transversal,
	\end{itemize}
\item if $c\in C$, then $\mathrm{pr}(G_c)$ has exactly one critical point,
\end{itemize}
Put $R=[0,1]\setminus S$.
For any $r\in R$, $(G_r;F)$ is in regular position;
$D(G_r; F)$ is well-defined.
We check the sequence $(D(G_r; F))_{r\in R}$ of spatial surface diagrams in detail.
Let $t$ be an point in $S\sqcup C$.
\par
{\bf Case 1}: we consider the case where $t\in S$ and $G_{t}$ has one non-transversal intersection $v$ with $A_F$.
We note that any double point of $\mathrm{pr}(G_t)$ is regular.
Suppose that $v$ is not a trivalent vertex.
Before and after $t$ in $(D(G_r; F))_{r\in R}$, one of the deformations in Fig.~\ref{FIG:occur_R1} is applied, although the case where an arc passes through a corner-arc from the back side of $F$ is omitted.
Each deformation is realized by R0--R1 moves.
We suppose that $v$ is a trivalent vertex.
Before and after $t$ in $(D(G_r;F))_{r\in R}$, one of the deformations in Fig.~\ref{FIG:occur_R4} is applied, although the case where a vertex passes through a corner-arc from the back side of $F$ is omitted.
Each deformation is realized by R0 and R4 moves.
\par
{\bf Case 2}: we consider the case where $t\in S$ and $\mathrm{pr}(G_{t})$ has one non-regular double point.
We note that any intersection of $G_t$ with $A_F$ is transversal.
Since no arcs pass through a corner-arc before and after $t$, then R0 moves are not applied in $(D(G_r;F))_{r\in R}$ before and after $t$.
A deformation before and after $t$ in $(D(G_r;F))_{r\in R}$  are realized by R2 and R5 moves.
		\begin{figure}[htbp]\centerline{
		\includegraphics{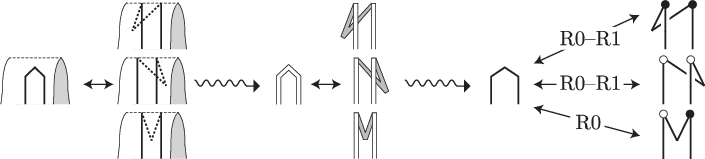}}
		\caption{Processes that an arc passes through a corner-arc from the front side.}
		\label{FIG:occur_R1}
		\end{figure}

		\begin{figure}[htbp]\centerline{
		\includegraphics{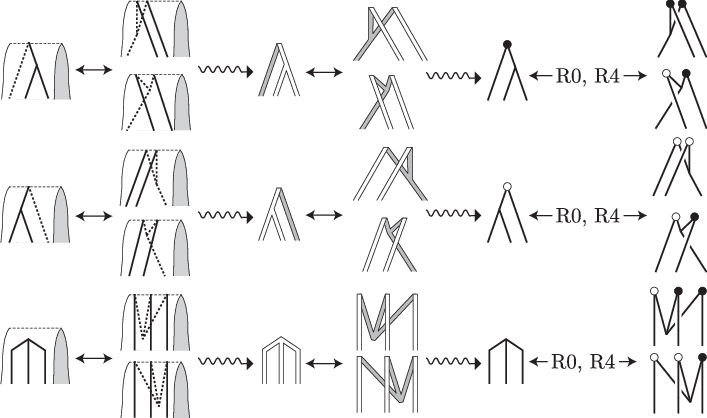}}
		\caption{Processes that a trivalent vertex passes through a corner-arc from the front side.}\label{FIG:occur_R4}
		\end{figure}
\par
{\bf Case 3}: we consider the case where $t\in C$.
We note that $G_t$ is in regular position.
There is a transversal point $p\in G_t\cap A_F$ such that $\mathrm{pr}(p)\in\mathrm{pr}(G_t)$ is the critical point.
One of the deformations in Fig.~\ref{FIG:occur_R0_R1} is applied in $(D(G_r; F))_{r\in R}$ before and after $t$.
Each deformation is realized by R0--R1 moves.
\par
By considering the cases above, $D$ and $D(G;F)$ are related by R0--R2 and R4--R5 moves, where we note $D=D(G_0;F)$ and $D(G;F)=D(G_1;F)$.
\par
Secondly, we suppose that $G_0$ and $G$ are related by an IH-move on $F$; we show that $D$ and $D(G;F)$ are related by R0--R2 and R4--R6 moves.
When an IH-move is applied, there might be many arcs above or below the region where the IH-move is applied.
Then, we shrink the region, by isotopy of $F$, into a small region so that an R6 move can be applied. 
In the process of shrinking the region, R0--R2 and R4--R5 moves are applied, see the cases 1, 2 and 3.
Hence, $D$ and $D(G;F)$ are related by R0--R2 and R4--R6 moves.
		\begin{figure}[htbp]\centerline{
		\includegraphics{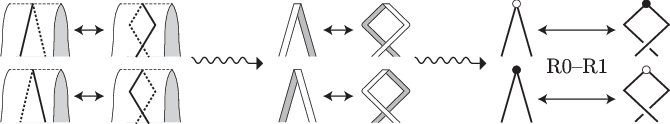}}
		\caption{All deformations around a corner-arc.}
		\label{FIG:occur_R0_R1}
		\end{figure}

We show (2).
Since $F$ has no corner-arcs, none of R0--R1 and R4 moves are not applied, see the case 1, 2 and 3.
Therefore, $D$ and $D(G;F)$ are related by R2 and R5--R6 moves.
\end{proof}

\begin{proof}[Proof of Theorem~\ref{THM:main_non_oriented}.] 
Since $F$ and $\operatorname{Sf}(D)$ are ambient isotopic, we assume that $\operatorname{Sf}(D)=F$.
For the same reason, we assume that $\operatorname{Sf}(D')=F'$.
\par
Suppose that (b) is satisfied.
If $D$ and $D'$ are related by exactly one of R0--R6 moves, we can immediately construct an isotopy between $F$ and $F'$.
Since $D$ and $D'$ are related by finitely many R0--R6 moves, then $F$ and $F'$ are also ambient isotopic; (a) holds.
\par
Suppose that (a) is satisfied; we show that (b) holds.
Let $G$ (resp.~$G'$) be the trivalent spine of $F$ (resp.~$F'$) such that $D(G;F)=D$ (resp.~$D(G';F')=D'$).
Since $F$ and $F'$ are ambient isotopic, we take an isotopy $\{h_t:\mathbb R^3\to\mathbb R^3\}_{t\in [0,1]}$ with $h_0=\mathrm{id}|_{\mathbb R^3}$ such that $h_1(F)=F'$ and $(h_1(G);h_1(F))$ is in regular position.
By Lemma~\ref{LEM:step1(simple_spine_into_complicated_spine)}, $D$ and $D(h_1(G);h_1(F))$ are related by R0--R5 moves.
Furthermore, $D(h_1(G);h_1(F))$ and $D'$ are related by R0--R2 and R4--R6 moves by Lemma~\ref{LEM:Step2(Luo_spatial_surface_version)}~(1).
Therefore, $D$ and $D'$ are related by R0--R6 moves.
\end{proof}

\subsection{The proof of Theorem~\ref{THM:main_oriented}.}\label{SEC:oriented}
\begin{lemma}\label{LEM:easy_replacement}
\begin{enumerate}
\item[(1)] The local replacement in Fig.~\ref{FIG:R2_R3_curl} is realized by R2--R3 moves in the shaded region of the figure.
\item[(2)] If a spatial surface diagram $D$ has no bivalent vertices, then an R${\omega}$ move as illustrated in Fig.~\ref{FIG:spatial_surface_Reidemeister_move} is realized by R2--R3 and R5 moves on $S^2$.
\end{enumerate}
\end{lemma}

\begin{proof}
We show (1).
The left local replacement in Fig.~\ref{FIG:R2_R3_curl} is realized by R2--R3 moves on $\mathbb R^2$ as depicted in Fig.~\ref{FIG:proof_R2_R3_curl}.
The right replacement in Fig.~\ref{FIG:R2_R3_curl} is also realized by R2--R3 moves in the same manner.
\par
We show (2).
When the thick part of the arc in Fig.~\ref{FIG:proof_curl} is passing through the backside of the sphere $S^2$, R2--R3 and R5 moves are applied.
The right R${\omega}$ move in Fig.~\ref{FIG:spatial_surface_Reidemeister_move} is also realized by the same moves.
\end{proof}
		\begin{figure}[htbp]\centerline{
		\includegraphics{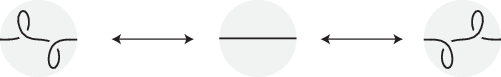}}
		\caption{A local replacement.}
		\label{FIG:R2_R3_curl}
		\end{figure}
		
		\begin{figure}[htbp]\centerline{
		\includegraphics{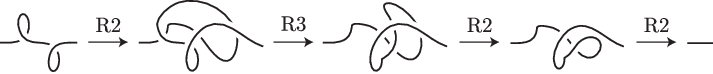}}
		\caption{This deformation is realized  by R2--R3 moves in $\mathbb R^2$.}\label{FIG:proof_R2_R3_curl}
		\end{figure}		

		\begin{figure}[htbp]\centerline{
		\includegraphics{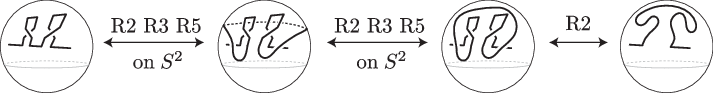}}
		\caption{The R${\omega}$ move is realized by R2--R3 and R5 moves in $S^2$.}
		\label{FIG:proof_curl}
		\end{figure}
\par
For a spatial surface diagram $D$, let us denote by $\overline{D}$ the spatial surface diagram that is obtained from $D$ by forgetting its bivalent vertices:
$\overline{D}$ has no bivalent vertices, and the sign of $\overline{D}$ is the empty function.
An {\it edge} of the spatial surface diagram $D$ is a part $P$ of $D$ such that $P$ corresponds with the projection image of an edge or a loop component of the trivalent graph which $\overline{D}$ represents. 
A self-crossing of $D$ is {\it positive} (resp.~{\it negative}) if it is the left (resp.~right) self-crossing of Fig.~\ref{FIG:positive_negative}: a dashed line may have crossings with other arcs of $D$.
		\begin{figure}[htbp]\centerline{
		\includegraphics{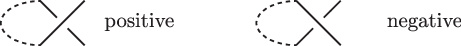}}
		\caption{A positive self-crossing and a negative self-crossing}
		\label{FIG:positive_negative}
		\end{figure}

\begin{Def}\label{DEF:framing}{\rm
For an edge $e$ of a spatial surface diagram $(D,s)$, let us denote by $p(e)$ and $n(e)$ the number of positive and negative self-crossings of $e$, respectively.
We define the {\it framing} $f(e)$ of $e$ as $f(e)=2(p(e)-n(e))+s(e)$, where we write $s(e)=\sum_{v\in V_2(D)\cap e}s(v)$.
}\end{Def}

\begin{lemma}\label{LEM_flaming}
If we apply R0--R3 moves for a spatial surface diagram, the framing of each edge does not change.
\end{lemma}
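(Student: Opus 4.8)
The plan is to fix an edge $e$ of $D$ and to verify directly, move by move, that the quantity $f(e)=2(p(e)-n(e))+s(e)$ is left unchanged by each of R0, R1, R2 and R3. Since each application of one of these moves is supported in a small disk, it affects only the edge(s) meeting that disk, and for an edge disjoint from the disk there is nothing to check. The two ingredients I would isolate at the outset are: (i) whenever such a move creates or destroys self-crossings of a single edge, they appear in pairs of opposite sign, so that $p(e)-n(e)$ is unaffected; and (ii) none of R0--R3 transports a bivalent vertex across a trivalent vertex, so the multiset of signs $\{s(v): v\in V_2(D)\cap e\}$, and hence $s(e)$, is preserved.

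For R3 there is nothing to prove beyond bookkeeping: the triangle move neither creates nor destroys a crossing and preserves the over/under datum at each of the three crossings involved, so in particular the sign (in the sense of Figure \ref{FIG:positive_negative}) of every self-crossing of $e$ is unchanged, and no bivalent vertex is moved. For R2 I would split into two cases according to whether the two arcs in the move-disk lie on the same edge or on two distinct edges. In the first case the two crossings created (or removed) form a self-crossing pair of $e$ of opposite sign, so $p(e)$ and $n(e)$ change by the same amount and $p(e)-n(e)$ stays constant; in the second case the crossings are not self-crossings of either edge, so all the numbers $p(\cdot),n(\cdot)$ are untouched. As R2 involves no bivalent vertices, $f(e)$ is preserved in both cases. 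For R0, which only rearranges bivalent vertices, I would note that each vertex stays on its own edge (so $s(e)$ is unchanged) and that any self-crossings introduced while sliding a vertex across a strand again occur in opposite-sign pairs, leaving $p(e)-n(e)$ fixed.

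The one move requiring genuine care — and the step I expect to be the main obstacle — is R1, precisely because the classical Reidemeister~I move changes the writhe. Here the point is that, for a band, a curl is isotopic to a full twist, and this is exactly the content of the surface move R1 in Figure \ref{FIG:spatial_surface_Reidemeister_move}: it replaces a single self-crossing of $e$ of sign $\epsilon$ by two bivalent vertices of the same sign $\epsilon$ (Figure \ref{FIG:other_twist_band}), or vice versa. Tracking the two terms of the framing, the contribution $2(p(e)-n(e))$ then changes by $-2\epsilon$ while $s(e)$ changes by $+2\epsilon$ (or with the roles reversed when the curl is created), and these cancel. Thus the coefficient $2$ in the definition of $f(e)$ is exactly what makes one self-crossing (one full twist of the band) balance the two half-twists recorded by a pair of bivalent vertices, so that $f(e)$ is invariant. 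The only delicate verification is the matching of signs — that a positive band curl corresponds to a positive full twist rather than a negative one — which I would settle by comparing the conventions fixed in Figures \ref{FIG:positive_negative} and \ref{FIG:other_twist_band} directly on the R1 picture.
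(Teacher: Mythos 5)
Your proposal is correct and is exactly the case-by-case verification that the paper's one-line proof ("we can check easily...") alludes to; in particular you correctly identify R1 as the only nontrivial case, where the loss of a self-crossing of sign $\epsilon$ (changing $2(p(e)-n(e))$ by $-2\epsilon$) is compensated by the two bivalent vertices it produces (changing $s(e)$ by $+2\epsilon$). No substantive difference from the paper's argument.
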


\begin{proof}
We can check easily that the framing of each edge does not change before and after an R0, R1, R2 or R3 move.
\end{proof}

\begin{rmk}\label{RMK:framing}{\rm
Applying an R4 move to a spatial surface diagram changes the framing of an edge.
There is a possibility that an R5 move changes the framing of an edge.
}\end{rmk}
We denote by R1$-$ the Reidemeister move R1 that reduces the number of crossings.

\begin{lemma}\label{LEM_R1_vanish}
Let $(D_i)_{1\le i\le n}$ be a sequence of spatial surface diagrams such that
\begin{itemize}
\item $D_1$ and $D_n$ have no bivalent vertices and
\item $D_i$ is transformed into $D_{i+1}$ by an R0 move or an R1$-$ move for any $i$.
\end{itemize}
Then, $D_1$ and $D_n$ are related by R${\omega}$ and R2--R3 moves in $\mathbb R^2$.
\end{lemma}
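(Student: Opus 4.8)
The plan is to analyze the net effect of the sequence on the bivalent vertices and then to reconstruct that net effect using only R$\omega$ and R2--R3 moves. Throughout I would use two facts established above: the framing $f(e)$ of every edge is unchanged by R0 and R1- moves, since these are among R0--R3 (Lemma \ref{LEM_flaming}), and an R$\omega$ move is exactly an R1 move performed together with R0 moves (Remark \ref{REM:curl_obtained_from_R0_R1}). Recall also that an R1- move replaces a curl (one self-crossing) on an edge by a full twist, that is, by two bivalent vertices of the same sign, this being the relation pictured in Figure \ref{FIG:full_twist_to_curl}, while an R0 move only slides bivalent vertices along their own edge and annihilates or creates adjacent pairs of vertices of opposite sign. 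In particular neither move alters the underlying trivalent graph, and no bivalent vertex migrates from one edge to another; this is consistent with, and is in fact forced by, the edgewise invariance of $f$.

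First I would set up the bookkeeping of bivalent vertices. Since $D_1$ and $D_n$ carry none, every vertex created somewhere in the sequence is destroyed later on. The only creations are the same-sign pairs produced by R1- moves, and, because R1+ is not available, the only destructions are the opposite-sign adjacent pairs annihilated by R0 moves. Tracing each created vertex to the R0 move that annihilates it therefore matches the R1- moves into groups whose produced full twists cancel, and at the level of the original curls this exhibits, on each edge $e$, a pairing of the removed positive curls with the removed negative curls. That the two kinds are equinumerous on each edge follows from framing invariance: writing $f(e)=2(p(e)-n(e))+s(e)$ with $s(e)=0$ at both ends, the equality of $f(e)$ in $D_1$ and $D_n$ gives $p_1(e)-n_1(e)=p_n(e)-n_n(e)$, so exactly as many positive as negative curls disappear from $e$. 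Consequently $D_n$ is obtained from $D_1$, up to planar isotopy, by deleting a family of curls that splits, edge by edge, into pairs of opposite sign.

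It then remains to realize the deletion of one oppositely-signed pair of curls on a single edge by R$\omega$ and R2--R3 moves. Using R$\omega$ I would first bring the two curls to the same side of the strand and make them adjacent, transporting one of them past any intervening crossings by R3, and re-siding it by R$\omega$ whenever necessary; the two self-crossings then form an oppositely-signed bigon which is removed by a single R2 move, straightening the strand. Applying this cancellation to every pair, and observing that once the diagram is free of bivalent vertices the residual R0 moves are ordinary planar isotopies, produces a sequence from $D_1$ to $D_n$ that uses only R$\omega$ and R2--R3, as required.

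The main obstacle is the global bookkeeping of the middle step: across a long sequence the full twists produced by R1- moves slide about while many creations and annihilations interleave, so genuine care is needed to make the matching of R1- moves well defined and to confirm that it descends to an honest opposite-sign pairing of the vanished curls on each fixed edge. A secondary, more routine diagrammatic difficulty is the transport that brings a chosen pair of curls together past the rest of the diagram; this is precisely where R3 (for crossings) and R$\omega$ (for re-siding a curl) are indispensable, and one must verify that no stray R0 or R1 move is reintroduced during this rearrangement.
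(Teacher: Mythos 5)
Your argument is correct and follows the paper's own proof in all essentials: both observe that $D_1$ and $D_n$ have the same underlying projection up to inserted curls, invoke the framing invariance of Lemma \ref{LEM_flaming} (with $s(e)=0$ at both ends) to conclude that the curls removed from each edge split into equally many positive and negative ones, and then cancel these by transporting curls past crossings with R2--R3 and eliminating opposite pairs with R${\omega}$ and R2--R3 (the paper first gathers all teardrops of an edge into one small region and cancels them in blocks, whereas you cancel pair by pair). The vertex-tracing bookkeeping that you flag as the main obstacle is in fact dispensable, since the framing count already yields the required equinumerosity --- which is exactly how the paper proceeds.
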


\begin{proof}
By our assumption, we see that $D_1$ is obtained by continuing to replace an local arc of $D_n$ with a teardrop-like piece, see Fig.~\ref{FIG:trivial_teardrop}.
On each edge of $D_1$, we move teardrop-like pieces into a small region so that all the teardrop-like pieces of an edge are aligned as depicted in Fig.~\ref{FIG:proof_aligned_teardrop}.
When a teardrop-like piece is passing through a crossing, R2--R3 moves are applied, see Fig.~\ref{FIG:teardrop_move}.
By the process, we have a new spatial surface diagram $D_1'$ such that $D_1'$ and $D_1$ are related by R2--R3 moves.
\par
We show that $D_n$ and $D_1'$ are related by R${\omega}$ and  R2--R3 moves; then the proof will be shown.
Since $D_1$ and $D_n$ are related by R0--R1 moves, $D_1'$ and $D_n$ are related by R0--R3 moves.
By Lemma~\ref{LEM_flaming}, the framing of each edge does not change before and after the process above.
Therefore, in each local area where the teardrop-like pieces are gathered, the number of positive self-crossings is equal to that of negative self-crossings.
In Fig.~\ref{FIG:proof_aligned_teardrop}, the B and C parts are canceled by R2--R3 moves by Lemma~\ref{LEM:easy_replacement}~(1), and the A and D parts are also canceled by R${\omega}$ moves by Lemma~\ref{LEM:easy_replacement}~(2).
Hence, $D_n$ and $D_1'$ are related by R${\omega}$ and  R2--R3 moves.
\end{proof}
		\begin{figure}[htbp]\centerline{
		\includegraphics[clip]{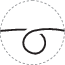}}
		\caption{A teardrop-like piece.}
		\label{FIG:trivial_teardrop}
		\end{figure}

		\begin{figure}[htbp]\centerline{
		\includegraphics{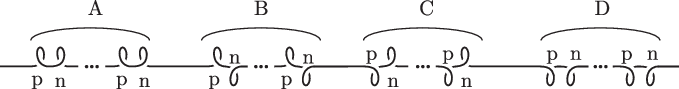}}
		\caption{The symbols ``p'' and ``n'' mean positive and negative self-crossings, respectively.}\label{FIG:proof_aligned_teardrop}
		\end{figure}

		\begin{figure}[htbp]\centerline{
		\includegraphics{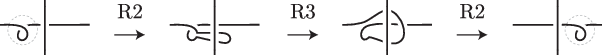}}
		\caption{Deformations by R2--R3 moves.}
		\label{FIG:teardrop_move}
		\end{figure}

The lemma below follows immediately.
\begin{lemma}\label{LEM_trivial}
Let $D$ and $D'$ be spatial surface diagrams.
We suppose that $D$ has no bivalent vertices.
If $D$ and $D'$ are related by R0, R2--R3 and R5 moves in $\mathbb R^2$, then $D$ and $\overline{D'}$ are related by R2--R3 and R5 moves in $\mathbb R^2$, and $\overline{D'}$ and $D'$ are related by R0 moves.
\end{lemma}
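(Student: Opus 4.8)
The plan is to use that the two kinds of moves act on complementary parts of the data of a spatial surface diagram: an R0 move changes only the bivalent vertices and leaves the underlying diagram $\overline{D}$ (projection, crossings, trivalent vertices) unchanged, while an R2, R3 or R5 move is, by definition, a move of the underlying diagram carried out in a region containing no bivalent vertices, hence it fixes every bivalent vertex of $D$ and restricts to the same move on $\overline{D}$. (Recall that it is R1, not R0, that creates crossings, since an R$\omega$ move is realized by R0 and R1; so an R0 move genuinely leaves $\overline{D}$ fixed.)

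First I would record the induced sequence of underlying diagrams. Fix a sequence $D = D_1, D_2, \ldots, D_m = D'$ realizing the hypothesis, each step an R0, R2, R3 or R5 move, and set $u_i = \overline{D_i}$. By the remark above, $u_i = u_{i+1}$ at every R0 step and $u_i \to u_{i+1}$ is the corresponding R2/R3/R5 move at every other step. Since $D$ has no bivalent vertices we have $u_1 = \overline{D} = D$, and $u_m = \overline{D'}$; discarding the trivial R0 steps leaves a sequence of R2--R3 and R5 moves from $D$ to $\overline{D'}$. This proves the first assertion.

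For the second assertion I would separate the two move types by pushing every R0 move to the end of the sequence. The one thing to check is that an R0 move immediately followed by an R2/R3/R5 move may be interchanged: the R2/R3/R5 move is supported in a disk that contains no bivalent vertex, so the bivalent vertex affected by the R0 move can be taken disjoint from that disk (first sliding it out by an R0 move if necessary), after which the two moves have disjoint support and commute. Iterating, I reach a sequence $D \to \cdots \to X \to \cdots \to D'$ in which $D \to X$ uses only R2--R3 and R5 moves and $X \to D'$ uses only R0 moves. Because $D$ has no bivalent vertices and R2--R3, R5 moves create none, $X$ has none either, so $X = \overline{X}$; and because $X \to D'$ consists of R0 moves, which fix the underlying diagram, $\overline{X} = \overline{D'}$. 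Therefore $X = \overline{D'}$ and the R0 part of the sequence realizes $\overline{D'} \to D'$, which is the second assertion.

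The step I expect to be the crux is the commutation of an R0 move with an adjacent R2/R3/R5 move; everything else is bookkeeping with the forgetful operation $D \mapsto \overline{D}$. This step relies exactly on the convention that R2, R3 and R5 moves take place away from bivalent vertices, so that the bivalent-vertex data and the underlying-diagram data never interfere locally and their supports can always be separated before swapping.
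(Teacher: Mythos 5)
Your argument for the first assertion is correct and is precisely the ``immediate'' argument the paper intends (the paper prints no proof, introducing the lemma with ``the lemma below follows immediately''): an R0 move acts only on bivalent vertices and so fixes the underlying diagram, while an R2, R3 or R5 move is supported in a disk containing no bivalent vertices and hence descends to the same move on the underlying diagrams; applying the forgetful operation to the whole sequence therefore yields $D=\overline{D_1}\to\cdots\to\overline{D_m}=\overline{D'}$ by R2--R3 and R5 moves alone.

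The commutation step in your proof of the second assertion has a genuine gap. An R2, R3 or R5 move is only applicable in a disk $\delta$ whose local picture contains no bivalent vertices, and in the given sequence the R0 move immediately preceding such a step may be exactly the move that clears $\delta$ --- for example, sliding a bivalent vertex off the arc of the bigon that the subsequent R2 move deletes. That R0 move cannot be postponed: no R2/R3/R5 move is applicable to $D_i$ in $\delta$ while the vertex is still inside it, and ``first sliding it out by an R0 move if necessary'' merely reinserts an R0 move in front of the R2/R3/R5 move, so the reordering procedure does not terminate and the diagram $X$ with all R0 moves pushed past it need not exist. The statement is nonetheless true, and the intended one-line argument goes through the edgewise sign sum $s(e)=\sum_{v\in V_2(D)\cap e}s(v)$ from Definition \ref{DEF:framing}: R0 moves preserve $s(e)$ for every edge (this is the content of how R0 is used throughout Section \ref{SEC:proof}, and is implicit in Lemma \ref{LEM_flaming}), while R2--R3 and R5 moves do not involve bivalent vertices at all, so $s(e)$ is constant along the entire sequence. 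Since $D$ has no bivalent vertices, every edge of $D'$ satisfies $s(e)=0$, so the bivalent vertices of $D'$ can be gathered along each edge and cancelled in opposite-sign pairs by R0 moves; this is exactly the assertion that $\overline{D'}$ and $D'$ are related by R0 moves. Note that this second assertion is not automatic for an arbitrary diagram (a single bivalent vertex of sign $+1$ cannot be removed by R0 moves), so some such invariance argument is genuinely needed.
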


\begin{prop}\label{LEM:if_it_has_no_R4}
Let $D$ and $D'$ be spatial surface diagrams that have no bivalent vertices.
If $D$ and $D'$ are related by R0--R3 and R5 moves in $\mathbb R^2$, then $D$ and $D'$ are related by R${\omega}$, R2--R3, and R5 moves in $\mathbb R^2$.
\end{prop}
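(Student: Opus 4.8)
The plan is to take the given finite sequence $D=D_1,\dots,D_m=D'$ of R0--R3 and R5 moves and rewrite it into a sequence of R${\omega}$, R2--R3 and R5 moves; since R2, R3 and R5 already occur on both sides, the whole task is to trade away the R0 and R1 moves in favour of R${\omega}$ moves. The first step is to strip off the bivalent vertices by passing to the shadows $\overline{D_i}$. Every R0 move affects only the twisting data, so it becomes trivial on shadows ($\overline{D_i}=\overline{D_{i+1}}$); each R1 move, which by Lemma \ref{LEM_flaming} preserves framing and hence exchanges a curl for a framing-equivalent twisted band, descends to an ordinary curl insertion or deletion on the shadow; and each R2, R3 or R5 move descends to a move of the same name. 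Because $D$ and $D'$ already have no bivalent vertices we have $\overline{D_1}=D$ and $\overline{D_m}=D'$, so after this reduction $D$ and $D'$ are connected by a sequence of R1, R2, R3 and R5 moves between bivalent-vertex-free diagrams, with the R0 moves eliminated. (This is the phenomenon that Lemma \ref{LEM_trivial} isolates: R0 moves can always be separated out because they only rearrange bivalent vertices.)

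The second step is to remove the surviving R1 (curl) moves, rewriting them with R${\omega}$ together with R2, R3 and R5. The governing invariant is again the framing: by Lemma \ref{LEM_flaming} the framing of each edge is fixed under R0--R3 moves, and since $D$ and $D'$ represent the same oriented surface they carry equal framings edge by edge, so the curls produced along the sequence must cancel in total sign on each edge. Following the mechanism of Lemma \ref{LEM_R1_vanish}, I would gather all curls lying on a common edge into one region by R2--R3 moves (as in Figure \ref{FIG:teardrop_move}), annihilate each oppositely signed adjacent pair by Lemma \ref{LEM:easy_replacement}(1), and remove the surviving equally signed curls by R${\omega}$ moves using Lemma \ref{LEM:easy_replacement}(2). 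This converts the curl content into R${\omega}$, R2 and R3 moves, leaving a sequence built from R${\omega}$, R2, R3 and R5 moves only, as required.

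The step I expect to be the main obstacle is the interleaving. In Lemma \ref{LEM_R1_vanish} the two diagrams differ by a \emph{pure} string of curl insertions, so the curls can be slid together directly; here the R1 moves are separated by R2, R3 and R5 moves that carry strands across crossings, past trivalent vertices, and possibly across the point at infinity. The crux is therefore to show that each curl can be transported to its cancellation site compatibly with these intervening moves, using only R2, R3 and R5 and exactly one R${\omega}$ move precisely when the transport is forced across infinity (recall that on $S^2$ an R${\omega}$ move is free by Lemma \ref{LEM:easy_replacement}(2), whereas in $\mathbb R^2$ crossing infinity is exactly what an R${\omega}$ move records). Once this transport is justified, the equality of framings forces complete cancellation and only R${\omega}$, R2, R3 and R5 moves survive.
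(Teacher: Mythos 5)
Your proposal identifies the right difficulty but does not resolve it, and the resolution is precisely the content of the paper's proof. You reduce to a sequence of bare curl insertions/deletions interleaved with R2--R3 and R5 moves and then say the ``crux'' is to transport each curl through the intervening moves to its cancellation site. That transport problem is genuinely hard as stated: a curl created at step $i$ lives in the diagram $D_i$, its cancelling partner may only appear at a much later step $j$, and in between the diagram has been altered by arbitrarily many R2, R3 and R5 moves, so the curl is not an identifiable feature of a single fixed diagram that you can slide around as in Figure \ref{FIG:teardrop_move}. The paper avoids this entirely by a bookkeeping device you do not have: whenever an R1 move would occur, it instead attaches a framing-neutral ``small bivalent disk'' (one teardrop plus two bivalent vertices), which is achieved by R0 and R2 moves; these disks are then carried passively through the rest of the sequence (pushed out of any move region by R0 and R2--R3), so that at the end one has a \emph{single static} diagram $D_n'$ differing from $D'$ only by attached disks. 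Only then are Lemma \ref{LEM_trivial} and Lemma \ref{LEM_R1_vanish} invoked, and the latter's hypotheses (a pure R0/R1$^-$ string between two fixed bivalent-vertex-free diagrams) are exactly what makes the gathering-and-cancelling of teardrops legitimate. Without this deferral, your step two is a plan, not a proof.

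There is also a flaw in your stated justification for why the curls cancel in net sign on each edge. You appeal to ``$D$ and $D'$ represent the same oriented surface [so] they carry equal framings edge by edge,'' but neither clause is available: the hypothesis is only that the diagrams are related by R0--R3 and R5 moves, and Remark \ref{RMK:framing} warns that an R5 move can change the framing of an edge, so Lemma \ref{LEM_flaming} does not give equality of framings of $D$ and $D'$. (A correct version of this count exists --- the sign-sum $s(e)$ is preserved by R0, R2, R3 and R5 and changes oppositely to the curl count under R1, and vanishes at both ends --- but that is not the argument you gave.) Finally, your parenthetical reading of R${\omega}$ as recording ``transport across infinity'' is a misunderstanding: in Lemma \ref{LEM_R1_vanish} the R${\omega}$ moves are used to cancel the same-signed adjacent curl pairs (the A and D parts of Figure \ref{FIG:proof_aligned_teardrop}) that R2--R3 alone cannot remove in $\mathbb R^2$.
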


\begin{proof}
It suffices to construct a spatial surface diagram $D''$ with no bivalent vertices such that $D$ and $D''$ are related by R2--R3 and R5 moves and that $D''$ and $D'$ are related by R${\omega}$, R2--R3, and R5 moves.
\par
Since $D$ and $D'$ are related by R0--R3 and R5 moves, we fix a sequence $(D_i)_{1 \le i \le n}$ of spatial surface diagrams related by R0--R3 and R5 moves such that $D_1=D$ and $D_n=D'$.
Modifying the sequence $(D_i)_{1 \le i \le n}$, we will recursively construct a new sequence $(D_i')_{1 \le i \le n}$ of spatial surface diagrams satisfying that  $D_i'$ and $D_{i+1}'$ are related by R0, R2--R3 and R5 moves for every $i$ with $1\le i < n$.
We set $D_1'=D_1$.
Suppose that $D_i'$ is already defined.
\par
{\bf Case 1:} we consider the case where $D_{i}$ and $D_{i+1}$ are related by an R1 move.
We suppose that the left R1 move in Fig.~\ref{FIG:spatial_surface_Reidemeister_move} is applied.
As is illustrated in Fig.~\ref{FIG:proof_bivalent_small_disk}, we define $D_{i+1}'$ for two cases:
we attach a ``small bivalent disk'' on spatial surface diagrams, in stead of applying the left R1 move in Fig.~\ref{FIG:spatial_surface_Reidemeister_move}.
Each small bivalent disk contains one teardrop-like part and two bivalent vertices.
At a glance, $D_{i}'$ and $D_{i+1}'$ seem to be related by an R1 move.
However, $D_{i}'$ and $D_{i+1}'$ are actually related by R0 and R2 moves.
If a small bivalent disk attached to $D_i'$ is contained in the region where an R1 move is applied, we move it into on the outside of the region by using R0 and R2--R3 moves before we define $D_{i+1}'$, see Fig.~\ref{FIG:proof_small_R1_sliding}.
For also the right R1 move in Fig.~\ref{FIG:spatial_surface_Reidemeister_move}, we define $D_{i+1}'$ in the same manner.
\par
{\bf Case 2:} we consider the case where $D_{i}$ and $D_{i+1}$ are related by one of  R0, R2--R3 and R5 moves.
Let $\delta\subset\mathbb R^2$ be a disk in which the moves is applied.
If a small bivalent disk attached to $D_i'$ is contained in $\delta$, we move it on the outside of $\delta$ by using R0 and R2--R3 moves
before we define $D_{i+1}'$, as is the case 1.
We define $D_{i+1}'$ as follows:
$D_{i+1}'$ is equal to $D_i'$ in the outside of $\delta$ and the part of $D_{i+1}'$ contained in $\delta$ corresponds to the applied move.
		\begin{figure}[htbp]\centerline{
		\includegraphics{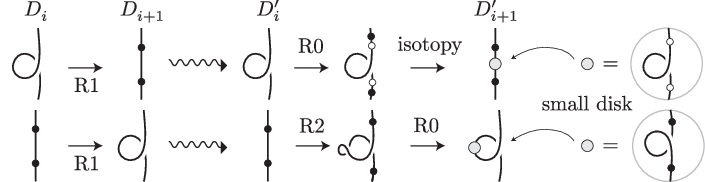}}
		\caption{Replace an R1 move with R0 and R2 moves.}
		\label{FIG:proof_bivalent_small_disk}
		\end{figure}

		\begin{figure}[htbp]\centerline{
		\includegraphics{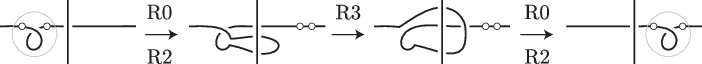}}
		\caption{This move is realized by R0 and R2--R3 moves.}
		\label{FIG:proof_small_R1_sliding}
		\end{figure}
\par
By the above definitions, we obtain a sequence $(D_i')_{1\le i\le n}$ such that $D_i'$ and $D_{i+1}'$ are related by R0, R2--R3 and R5 moves for any $i$.
Then, $D(=D_1'=D_1)$ and $D_n'$ are related by R0, R2--R3 and R5 moves.
Hence, by Lemma~\ref{LEM_trivial}, (i) the spatial surface diagrams $D$ and $D'':=\overline{D_n'}$ are related by R2--R3 and R5 moves, and $D''$ and $D_n'$ are related by R0 moves.
On the other hand, $D_n'$ is transformed into $D'(=D_n)$ by using R0 and R1$-$ moves in each small bivalent disk.
Then, $D''$ is also transformed into $D'$ by using R0 and R1$-$ moves.
By Lemma~\ref{LEM_R1_vanish}, we understand that (ii) $D''$ and $D'$ are related by R${\omega}$, R2--R3 and R5 moves in $\mathbb R^2$.
By using (i) and (ii), $D$ and $D'$ are related by R${\omega}$, R2--R3 and R5 moves.
\end{proof}

We denote by R4$-$ (resp.~R4$+$) the Reidemeister move R4 that reduces (resp.~increases) the number of crossings.
\begin{lemma}\label{LEM:no_R4}
Let $(D_i)_{1\le i\le n}$ be a sequence of spatial surface diagrams satisfying that
\begin{itemize}
\item $D_i$ is transformed into $D_{i+1}$ by an R4$-$ move for any $i$, and
\item R4$-$ moves are applied an even number of times at each trivalent vertex in $(D_i)_{1\le i\le n}$.
\end{itemize}
Then, $D_1$ and $D_n$ are related by R0--R3 and R5 moves in $\mathbb R^2$.
\end{lemma}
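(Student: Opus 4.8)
The plan is to mimic the proof of Lemma~\ref{LEM_R1_vanish}, playing off the parity hypothesis exactly as that proof played off framing invariance. First I would reverse the point of view and read the sequence from $D_n$ to $D_1$ as a sequence of R4 moves in the crossing-\emph{increasing} direction, each pushing a band over a trivalent vertex $v$. The essential task is then a purely local one: to show that a single passage of a band over $v$ can be realized, on the level of the underlying spatial graph, by pushing the band over the three edges incident to $v$ one at a time using R2 and R3 moves, at the cost of depositing near $v$ a single half-twist, that is, a bivalent vertex contributing $\pm 1$ to the framing of an incident edge. This is precisely where an R5 move enters, R5 being the move that lets a band negotiate a vertex at the expense of a twist; it is also consistent with Remark~\ref{RMK:framing}, which records that R4 (unlike R2 and R3) changes framing, while R5 may change it.

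Next I would replace, one by one, every R4$^-$ move in the given sequence by the composite of R2, R3 and R5 moves together with the half-twist gadget supplied by the local analysis, sliding out of the relevant region by R0 moves any gadget that obstructs a move, exactly as in the proofs of Lemma~\ref{LEM_R1_vanish} and Proposition~\ref{LEM:if_it_has_no_R4}. This produces a diagram $\widetilde{D}$ that is related to $D_1$ by R0--R3 and R5 moves and that differs from $D_n$ only by the collection of deposited half-twists, grouped according to the vertex at which the corresponding R4$^-$ move was applied.

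The final step is the cancellation. At each trivalent vertex $v$ the number of deposited half-twists equals the number of R4$^-$ moves performed at $v$, which is even by hypothesis. I would gather the half-twists lying on each incident edge into a single small region by R0 moves and align them as in Figure~\ref{FIG:proof_aligned_teardrop}. Since R0--R3 preserve the framing of every edge (Lemma~\ref{LEM_flaming}), the signed total of the gathered half-twists on each edge is even; opposite half-twists cancel by R0 and isotopy, while the remaining ones combine into full twists, each of which is removed by an R5 move, in the spirit of Lemma~\ref{LEM:easy_replacement}. After all gadgets are eliminated the diagram is exactly $D_n$, so tracing back the replacements shows that $D_1$ and $D_n$ are related by R0--R3 and R5 moves.

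I expect the main obstacle to be the local analysis of the first step, and in particular the sign bookkeeping that makes the parity hypothesis bite. One must verify that the half-twist produced by each R4$^-$ move depends only on local data at $v$ (essentially on whether the band passes in front of or behind the surface) in such a way that the \emph{signed} count of half-twists at $v$ is even whenever the \emph{raw} count is even; only then are the gadgets guaranteed to pair off. Checking that the half-twists arising from distinct strands passing over the same vertex combine additively, and that reordering gadgets past unrelated crossings (requiring R3, and R2) never alters framing, is the delicate part, whereas the alignment-and-cancellation of the last step is routine once the framing is under control.
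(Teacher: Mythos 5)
Your plan founders on its first and central step. In this paper R4 is \emph{not} the move in which a band slides past a trivalent vertex --- that move is R5 here (shared with the spatial-graph moves, and costing no twist at all). The surface R4 move turns the trivalent vertex itself over: it reverses which side of the small disk of surface at the vertex faces the positive $z$-direction, and in the diagram it creates or removes a crossing between two of the three legs together with three signed bivalent vertices (this is why Proposition~\ref{LEM:main} records an R4 move by a ``small trivalent disk'' containing one trivalent and three bivalent vertices, and why Remark~\ref{RMK:framing} says R4 changes framing). None of R0--R3, R5 or R$\omega$ changes this local front/back orientation at a trivalent vertex, and a bivalent vertex deposited on an incident edge records a half-twist of a \emph{band}, not a flip of the vertex disk. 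Consequently a single R4$^-$ move cannot be realized by R0, R2--R3 and R5 moves plus any half-twist gadget: the local orientation at the vertex is an invariant obstructing it. That invariant is precisely what the evenness hypothesis is for, and it has to be spent \emph{before} the replacement, not in a cancellation step afterwards. Your proposed endgame also leans on framing in a way that does not apply here: the three bivalent vertices produced by one flip lie on three different edges, so an even raw count at a vertex does not give an even signed count on any single edge, and R5 moves used in the replacement need not preserve framing in any case.

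The paper's proof instead uses the parity at the outset: it reduces to the case where exactly two R4$^-$ moves are applied at each vertex (pairs of flips restore the local orientation), enumerates the finitely many local configurations around a doubly-flipped vertex of $D_1$ (Figure~\ref{FIG:proof_Y1}), and shows by an explicit deformation (Figure~\ref{FIG:proof_R4_not_used}) that each such configuration returns to a plain Y-shaped part using only R0--R3 and R5 moves. If you want to repair your argument, the unit you must emulate is a \emph{pair} of R4$^-$ moves at the same vertex, which is exactly the paper's local lemma; the one-move-at-a-time scheme cannot be made to work.
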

\begin{proof}
It is sufficient to prove that $D_1$ and $D_n$ are related by R0--R3 and R5 moves if R4$-$ moves are applied exactly two times at each vertex $v$ in $(D_i)_{1\le i\le n}$.
All situations around a trivalent vertex of $D_1$ are depicted in Fig.~\ref{FIG:proof_Y1}:
each local part in the figure is transformed into a Y-shaped part by using two R4$-$ moves.
		\begin{figure}[htbp]\centerline{
		\includegraphics{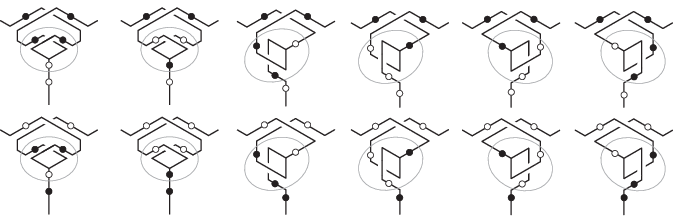}}
		\caption{All situations around a trivalent vertex.}
		\label{FIG:proof_Y1}
		\end{figure}
\par
As is depicted in Fig.~\ref{FIG:proof_R4_not_used}, the upper left of Fig.~\ref{FIG:proof_Y1} is deformed into a Y-shaped part by using R0--R3, R5 moves.
Similarly, each of Fig.~\ref{FIG:proof_Y1} is deformed into a Y-shaped part by using R0--R3, R5 moves, although its process is not depicted.
\end{proof}
		\begin{figure}[htbp]\centerline{
		\includegraphics{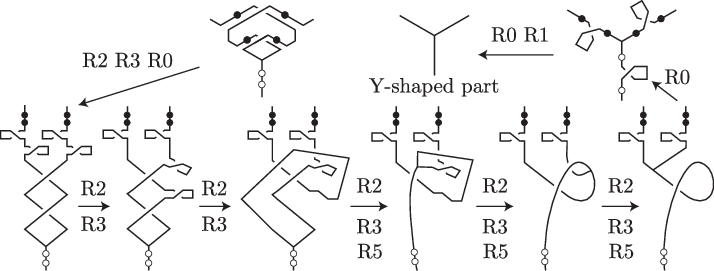}}
		\caption{A deformation into a Y-shaped part by R0--R3 and R5 moves.}
		\label{FIG:proof_R4_not_used}
		\end{figure}

\begin{prop}\label{LEM:main}
Let $(D_i)_{1\le i\le n}$ be a sequence of spatial surface diagrams satisfying that
\begin{itemize}
\item both $D_1$ and $D_n$ have no bivalent vertices,
\item $D_i$ and $D_{i+1}$ are related by one of R0--R5 moves for any $i$, and
\item R4 moves are applied an even number of times at each trivalent vertex in $(D_i)_{1\le i\le n}$.
\end{itemize}
Then, $D_1$ and $D_n$ are related by R${\omega}, $R2--R3 and R5  moves in $\mathbb R^2$.
\end{prop}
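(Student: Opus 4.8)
The plan is to reduce the proposition to Proposition \ref{LEM:if_it_has_no_R4}. The decisive intermediate claim is that $D_1$ and $D_n$ are related by R0--R3 and R5 moves, with no R4 move at all. Granting this, since $D_1$ and $D_n$ have no bivalent vertices they have no signs, so Proposition \ref{LEM:if_it_has_no_R4} applies directly and yields a sequence of R$\omega$, R2--R3 and R5 moves between them, which is exactly the assertion. Thus everything comes down to deleting the R4 moves from the given sequence, and this is precisely where the parity hypothesis must be spent.

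To delete the R4 moves I would bring the hypotheses of Lemma \ref{LEM:no_R4} into force, since that lemma converts an even family of R4$^{-}$ moves at each vertex into R0--R3 and R5 moves. First I would separate the R4 moves from the rest: moves carried out in disjoint parts of the diagram commute, and an R4 move can be transported past a neighbouring R0--R3 or R5 move at the cost of a controlled local modification, so after finitely many such exchanges the sequence may be assumed to consist of a block of R0--R3 and R5 moves, a single consecutive block built only from R4 moves, and then again R0--R3 and R5 moves. Because the total R4 count at each trivalent vertex is even and all R4 moves now lie in one block, that block still carries an even number of R4 moves at every vertex. Inside the block I would use this evenness to reorganize the R4 moves vertex by vertex, cancelling inverse pairs directly and putting the remaining moves into the crossing-reducing form of Lemma \ref{LEM:no_R4}; that lemma then rewrites the whole block by R0--R3 and R5 moves, leaving a sequence from $D_1$ to $D_n$ with no R4 move.

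The reason the parity hypothesis is indispensable is visible already at the level of framings: by Lemma \ref{LEM_flaming} the R0--R3 moves preserve the framing of every edge, whereas an R4 move alters it (Remark \ref{RMK:framing}), so the evenness of the R4 count at each vertex is the combinatorial shadow of the requirement that the two bivalent-vertex-free endpoints carry framings of the correct parity; without it no bivalent vertex could be made to cancel and the endpoints could not both be bivalent-free. I expect the separation step to be the main obstacle: commuting an R4 move past moves that share a crossing or a small disk, and across intermediate diagrams that carry bivalent vertices, is not a formal matter but requires a careful enumeration of the local pictures, and one must check that reorganizing the R4 block into the R4$^{-}$ situation of Lemma \ref{LEM:no_R4} does not create new crossings that escape the region under consideration. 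Once the R4 moves have been removed, the conclusion is immediate from Proposition \ref{LEM:if_it_has_no_R4}.
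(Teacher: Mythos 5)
Your overall architecture is the same as the paper's: reduce to showing that $D_1$ and $D_n$ are related by R0--R3 and R5 moves alone, invoke Proposition \ref{LEM:if_it_has_no_R4} to convert R1 into R${\omega}$, and dispose of the R4 moves by funnelling them into a block to which Lemma \ref{LEM:no_R4} applies. But the step you yourself flag as ``the main obstacle'' --- commuting each R4 move past the intervening R0--R3 and R5 moves so that all R4 moves form one consecutive block --- is exactly the step where the argument is missing, and I do not think the commutation strategy can be carried out as described. An R4 move at a vertex $v$ creates a crossing and a bivalent vertex near $v$, and the subsequent R0, R2, R3 and R5 moves can transport that crossing and that bivalent vertex arbitrarily far from $v$ before the next R4 move at $v$ occurs; there is therefore no fixed local picture in which to perform the exchange, and ``a controlled local modification'' is not available. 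Moreover, even if the R4 moves were assembled into one block, that block would contain both crossing-increasing and crossing-decreasing R4 moves in an arbitrary order, whereas Lemma \ref{LEM:no_R4} applies only to a sequence consisting purely of R4$^-$ moves; your remark about ``cancelling inverse pairs'' does not address the case of two crossing-increasing R4 moves at the same vertex on different edges, which are not inverse to one another.

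The paper's proof sidesteps both difficulties with a bookkeeping device you do not have: it never performs an R4 move in the modified sequence at all. Whenever the original sequence calls for an R4 move at a vertex, the modified sequence instead encloses that vertex in a ``small trivalent disk'' (a trivalent vertex plus three bivalent vertices and a teardrop), an insertion realized by R0 and R2 moves only; nested insertions are treated as a single vertex for all later moves, and R5 moves past such a disk are shown to decompose into R0, R2--R3 and R5 moves. Only after reaching (a decorated copy of) $D_n$ are all the accumulated disks unwound, from the innermost outward, and this unwinding is by construction a consecutive sequence of R4$^-$ moves applied an even number of times at each vertex --- precisely the hypothesis of Lemma \ref{LEM:no_R4}. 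Your diagnosis of the role of the parity hypothesis via framings (Lemma \ref{LEM_flaming} and Remark \ref{RMK:framing}) is sound, and your endgame via Proposition \ref{LEM:if_it_has_no_R4} is correct, but without the deferral device the reduction to Lemma \ref{LEM:no_R4} is not established.
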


\begin{proof}
We will show that $D_1$ and $D_n$ are related by R0--R3 and R5 moves;
then $D_1$ and $D_n$ are related by R$\omega$, R2--R3 and R5 moves by Proposition~\ref{LEM:if_it_has_no_R4}, that is,
the proof will complete.
It suffices to construct a sequence $(D_i')_{1\le i\le n+m}$ of spatial surface diagrams with $D_1'=D_1$ and $D_{n+m}'=D_n$ satisfying that 
\begin{enumerate}
\item[(i)]
$D_i'$ and $D_{i+1}'$ are related by R0--R3 and R5 moves for any integer $1\le i<n$,
\item[(ii)]
$D_i'$ is transformed into $D_{i+1}'$ by an R4$-$ move for any integer $i$ with $n\le i<n+m$,
\item[(iii)]
R4$-$ moves are applied an even number of times at each trivalent vertex in $(D_i')_{n\le i\le n+m}$,
\end{enumerate}
since $D_n'$ and $D_{m+n}'$ are related by R0--R3 and R5 moves by Lemma~\ref{LEM:no_R4}.
\par
First, by modifying the given sequence $(D_i)_{1\le i\le n}$, we recursively construct a sequence $(D_i')_{1\le i\le n}$ of spatial surface diagrams.
We set $D_1':=D_1$. Suppose that $D_i'$ is already defined.
\par
{\bf Case 1:} we consider the case where $D_{i}$ and $D_{i+1}$ are related by an R4 move.
Suppose that the left R4 move in Fig.~\ref{FIG:spatial_surface_Reidemeister_move} is applied.
Let $v$ be the trivalent vertex in which the move is applied.
We define $D_{i+1}'$ as illustrated in Fig.~\ref{FIG:proof_R4_trivalent_disk}:
we replace $v$ in $D_{i}'$ with a ``small trivalent disk'', instead of applying the left R4 move in Fig.~\ref{FIG:proof_R4_trivalent_disk}.
Each small trivalent disk contains an trivalent vertex and three bivalent vertices.
At a glance, $D_{i}'$ and $D_{i+1}'$ seem to be related by an R4 move.
However, $D_{i}'$ and $D_{i+1}'$ are actually related by R0 and R2 moves.
If  a trivalent vertex is already replaced with a small trivalent disk, we think of the trivalent disk as a trivalent vertex.
In the case, the small trivalent disk contains a smaller trivalent disk.
For the case where the right R4 move in Fig.~\ref{FIG:spatial_surface_Reidemeister_move} is applied, we define $D_{i+1}'$ in the same manner.
		\begin{figure}[htbp]\centerline{
		\includegraphics{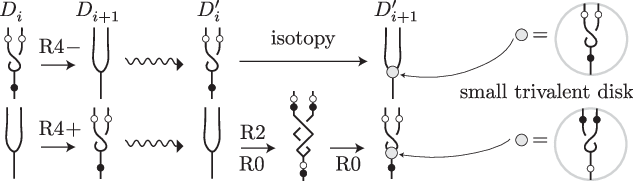}}
		\caption{A trivalent vertex is replaced by a ``small trivalent disk''.}
		\label{FIG:proof_R4_trivalent_disk}
		\end{figure}
\par
{\bf Case2:} we consider the case where $D_{i}$ and $D_{i+1}$ are related by an R0--R3 or R5 move.
Suppose that an R0, R1, R2 or R3 move is applied in a region $\delta$.
We define the spatial surface diagram $D_{i+1}'$ as follows:
$D_{i+1}'$ is equal to $D_i'$ on the outside of $\delta$, and $D_{i+1}'$ is equal to $D_{i+1}$ on the inside of $\delta$.
Suppose that an R5 move is applied in a region $\delta$.
If $\delta$ contains no trivalent small disk, we define $D_{i+1}'$ in the same way above.
If $\delta$ contains trivalent small disks, we regard the outermost small trivalent disk as a trivalent vertex and
define $D_{i+1}'$ in the same way above.
In this case, $D_{i+1}'$ and $D_{i}'$ are related by R0, R2--R3 and R5 moves, see Fig.~\ref{FIG:proof_R4_sliding}.
		\begin{figure}[htbp]\centerline{
		\includegraphics{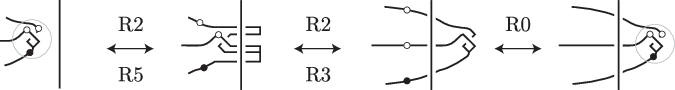}}
		\caption{An R5 move at a small trivalent disk is realized by R0, R2--R3 and R5 moves.}
		\label{FIG:proof_R4_sliding}
		\end{figure}
\par
By the construction above, (i) holds.
\par
Secondly, we construct a sequence $(D_i')_{n\le i\le n+m}$ of spatial surface diagrams.
We continue applying R4$-$ moves in order starting from the innermost small trivalent disks of $D_n'$, until trivalent disks disappear.
We have $D_n$, finally.
Let $(D_i')_{n\le i\le n+m}$ be a sequence obtained by the above process satisfying that $D_{n+m}'=D_n$ and the condition (ii).
The number of R4 moves in $(D_i)_{1\le i\le n}$ is equal to the number of R4$-$ moves in $(D_i')_{n\le i\le n+m}$ at each trivalent vertex.
Since R4 moves are applied an even number of times at each trivalent vertex in $(D_i)_{1\le i\le n}$, then the condition (iii) holds.
\end{proof}

\begin{proof}[Proof of Theorem~\ref{THM:main_oriented}.]
Since $F$ and $\overrightarrow{\operatorname{Sf}}(D)$ are ambient isotopic, we assume that $F=\overrightarrow{\operatorname{Sf}}(D)$.
For the same reason, we assume that $F'=\overrightarrow{\operatorname{Sf}}(D')$.
\par
We suppose that (b) holds; we show that (c) holds.
By Lemma~\ref{LEM:easy_replacement}~(2), an R${\omega}$ move is realized by R2--R3 and R5 moves in $S^2$; then (c) holds.
\par
We suppose that (c) holds; we show that (a) holds.
If $D$ and $D'$ are related by exactly one of R2--R3 and R5--R6 moves, we can immediately construct an isotopy between $F$ and $F'$ that preserves the orientations of $F$ and $F'$.
Since $D$ and $D'$ are related by finitely many R2--R3 and R5--R6 moves, then $F$ and $F'$ are also ambient isotopic including orientations: (a) holds.
\par
We suppose that (a) holds; we show that (b) holds.
Let $G$ (resp.~$G'$) be the trivalent spine of $F$ (resp.~$F'$) such that $D(G;F)=D$ (resp.~$D(G';F')=D'$).
Since $F$ and $F'$ are ambient isotopic including orientations, we fix an isotopy $\{h_t:\mathbb R^3\to\mathbb R^3\}_{t\in[0,1]}$
with $h_0=\mathrm{id}|_{\mathbb R^3}$ such that $h_1(F)=F'$ including orientations, and that $(h_1(G);h_1(F))$ is in regular position.
By Lemma~\ref{LEM:Step2(Luo_spatial_surface_version)}~(2), $D(h_1(G);h_1(F))$ and $D'$ are related by R2--R3 and R5--R6 moves in $\mathbb R^2$.
Then, it suffices to show that $D$ and $D(h_1(G);h_1(F))$ are related by R${\omega}$, R2--R3 and R5 moves in $\mathbb R^2$.
By Lemma~\ref{LEM:step1(simple_spine_into_complicated_spine)}, $D$ and $D(h_1(G);h_1(F))$ are related by R0--R5 moves.
Let $(D_i)_{1\le i\le n}$ be a sequence of spatial surface diagrams with $D_1=D$ and $D_n=D(h_1(G);h_1(F))$ such that
$D_i$ and $D_{i+1}$ are related by one of R0--R5 moves for any $i$, where we note that both $D_1$ and $D_n$ have no bivalent vertices.
Since the orientation of the oriented spatial surface $F$ is equal to that of the oriented spatial surface $F'$, then R4 moves are applied an even number of times at each trivalent vertex in $(D_i)_{1\le i\le n}$.
Hence, $(D_i)_{1\le i\le n}$ satisfies all of the conditions in Proposition~\ref{LEM:main}.
By Proposition~\ref{LEM:main}, $D_1$ and $D_n$ are related by R${\omega}$, R2--R3 and R5 moves.
\end{proof}

\section{A diagram of a non-split spatial surface}\label{SEC:non_split_spatial_surface}
In Sections~\ref{SEC:spatial_surface} and \ref{SEC:proof}, we called a spatial surface with boundary a spatial surface for short.
In this section, we use the notion of a spatial surface in the original definition:
a spatial surface may have some closed components, see Section~\ref{SEC:spatial_surface}.
We assume that a spatial surface has no sphere components and no disk components, since a sphere or disk in $S^3$ is unique up to ambient isotopy.
\par
A spatial surface $S$ is {\it split} if there is a 2-sphere $P$ embedded in $S^3\setminus\operatorname{int}S$ such that $P$ bounds no balls in $S^3\setminus\operatorname{int}S$.
A spatial surface $S$ is {\it non-split} if $S$ is not split, that is, any sphere embedded in $S^3\setminus \operatorname{int}S$ bounds a ball.
We note that any 1-component spatial surface is always non-split.
\par
In this section, we consider a way to present non-split spatial surfaces.
Let $S$ be a non-split spatial surface.
If $S$ has closed components $S_1$, \ldots, $S_n$, we remove the interior of a disk $\delta_i$ from $S_i$.
By the finitely many operations of removing a disk interior in each closed component, we have a spatial surface $F:=S\setminus \operatorname{int}\delta$ with boundary, where we put $\delta=\delta_1\sqcup\cdots\sqcup\delta_n$.
Proposition~\ref{PROP:many_hole_reconstruct_unique} claims that $F$ has necessary and sufficient information of the original spatial surface $S$ up to ambient isotopy.
Therefore, when we deal with a spatial surface that has closed components, it is sufficient to consider a spatial surface with boundary that is obtained from the original spatial surface by removing a disk interior of each closed component.
\begin{prop}\label{PROP:many_hole_reconstruct_unique}
Let $S$ be a non-split spatial surface, and let $\delta:=\delta_1\sqcup\cdots\sqcup\delta_n$ be a disjoint union of disks in $\operatorname{int}S$.
We denote by $F$ the non-split spatial surface $S\setminus\operatorname{int}\delta$.
Let $\Delta:=\Delta_1\sqcup\cdots\sqcup\Delta_n$ be a disjoint union of disks  in  $S^3\setminus\operatorname{int}F$ such that $\partial \Delta_i=\partial\delta_i$ for any integer $i$ with $1\le i\le n$.
Then, the spatial surfaces $S$ and $F\cup\Delta$ are ambient isotopic in $S^3$.
\end{prop}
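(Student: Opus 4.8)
The plan is to construct a single ambient isotopy of $S^3$ that keeps $F$ fixed and carries each original cap $\delta_i$ onto the new cap $\Delta_i$; since $S=F\cup\delta$, such an isotopy takes $S$ to $F\cup\Delta$ and proves the claim. The whole argument rests on the hypothesis that $F$ is non-split: this is exactly what guarantees that the relevant $2$-spheres bound balls, and pushing a disk across such a ball is the elementary move used throughout. Write $c_i=\partial\delta_i=\partial\Delta_i$, a boundary circle of $F$, and $X=S^3\setminus\operatorname{int}F$; since $\operatorname{int}\delta_i$ and $\operatorname{int}\Delta_i$ are disjoint from $\operatorname{int}F$, each sphere-like set $P_i:=\delta_i\cup\Delta_i$ lies in $X$.

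First I would put $\Delta$ in general position rel $\partial$: after a small perturbation fixing $\bigcup_i c_i$, the interiors of the $\Delta_i$ become mutually disjoint, disjoint from $\delta_j$ for $j\neq i$, and meet $\operatorname{int}\delta_i$ transversally in finitely many circles, while staying inside $X$. Using the standard innermost-disk argument I would then remove the intersection circles $\operatorname{int}\Delta_i\cap\operatorname{int}\delta_i$ one at a time: an innermost circle on $\delta_i$ bounds a subdisk $d\subset\delta_i$ meeting $\Delta_i$ only along its boundary, and together with the subdisk $d'\subset\Delta_i$ that it cuts off it forms a $2$-sphere $d\cup d'$, which bounds a ball in $X$ by non-splitness. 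Pushing $d'$ across that ball onto $d$ is an ambient isotopy that fixes $F$, reduces the number of intersection circles, and does not change $F\cup\Delta$ up to ambient isotopy. After finitely many such moves each $P_i$ is an embedded $2$-sphere, and the $P_i$ are pairwise disjoint.

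By non-splitness of $F$, each $P_i\subset X$ bounds a ball $B_i\subset X$. The key point, and the step where care is needed, is that these balls are automatically pairwise \emph{disjoint}. Indeed, $B_i\subset X$ means $B_i\cap\operatorname{int}F=\emptyset$; since every boundary circle $c_j$ has points of $\operatorname{int}F$ arbitrarily close to it, no $c_j$ can lie in the open ball $\operatorname{int}B_i$, so $\partial F\cap\operatorname{int}B_i=\emptyset$. As $c_j\subset P_j$, this forbids $P_j\subset\operatorname{int}B_i$ and hence rules out any nesting $B_j\subset\operatorname{int}B_i$; moreover $B_i\cup B_j=S^3$ is impossible because $B_i,B_j\subset X\subsetneq S^3$. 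Since the $P_i$ are disjoint, the only remaining possibility is $B_i\cap B_j=\emptyset$.

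Finally, for each $i$ the ball $B_i$ satisfies $\partial B_i=P_i=\delta_i\cup_{c_i}\Delta_i$ and $\operatorname{int}B_i\cap F=\emptyset$, so there is an ambient isotopy supported in a small neighborhood of $B_i$, fixing a neighborhood of $c_i$ (hence fixing $F$), that sweeps $\delta_i$ across $B_i$ onto $\Delta_i$. Because the $B_i$ are disjoint, these isotopies have disjoint supports and compose into one ambient isotopy of $S^3$ that fixes $F$ and sends $\delta$ to $\Delta$; this carries $S=F\cup\delta$ to $F\cup\Delta$, as required. The main obstacle is the bookkeeping in the middle steps, namely making the spheres $P_i$ embedded without altering the ambient isotopy class of $F\cup\Delta$ and proving that the filling balls can be chosen disjoint; in both places it is precisely the non-splitness of $F$ (equivalently, of $S$, which forces $F$ to be non-split once the removed open disks are pushed off a test sphere) that supplies the balls the argument needs.
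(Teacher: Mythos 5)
Your proof is correct and follows essentially the same route as the paper's: a cut-and-paste/innermost-disk argument using non-splitness to empty $\operatorname{int}\delta\cap\operatorname{int}\Delta$, then filling the resulting spheres $P_i=\delta_i\cup\Delta_i$ with balls that are pairwise disjoint because no boundary circle of $F$ can lie in the interior of such a ball, and finally isotoping each cap across its ball. The only inaccuracy is the claim that a small perturbation makes $\operatorname{int}\Delta_i$ disjoint from $\delta_j$ for $j\neq i$ --- such intersection circles cannot in general be removed by a small perturbation --- but this is harmless, since your innermost-circle argument eliminates them in exactly the same way as the circles of $\operatorname{int}\Delta_i\cap\operatorname{int}\delta_i$ (and this is how the paper treats them, working with all of $\operatorname{int}\delta\cap\operatorname{int}\Delta$ at once).
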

\begin{proof}
We use the Cut-And-Paste method.
By an isotopy of $S^3\setminus\operatorname{int}F$, we deform $\Delta$ so that $|L|$ is minimal, where we put $L=\operatorname{int}\delta\cap\operatorname{int}\Delta$, and $|L|$ is the number of connected components of $L$.
\par
We show that $L=\emptyset$; we suppose that $L \not= \emptyset$.
We may assume that every connected component of $L$ is a loop.
We take an innermost disk $\delta_0$ on $\delta$, where we note $\Delta\cap\operatorname{int}\delta_0=\emptyset$ and $\partial\delta_0$ is a loop in $L$.
Put $l_0=\partial\delta_0$.
Let $\Delta_0^+$ be the disk on $\Delta$ such that $\partial\Delta_0^+=l_0$.
Let $A$ be a thin regular neighborhood of $l_0$ in $\Delta$, where we note that $A$ is an annulus in $\Delta$.
Let $l^+$ (resp.~$l^-$) be the loop of $\partial A$ such that $l^+\subset\Delta_0^+$ (resp.~$l^-\subset\Delta\setminus\Delta_0^+$).
Let $\delta^+$ (resp.~$\delta^-$) be a disk in $S^3\setminus S$ such that $\delta^+$ (resp.~$\delta^-$) is parallel to $\delta_0$ and $\partial \delta^+=l^+$ (resp.~$\partial\delta^-=l^-$).
Put $\Delta^-=(\Delta\setminus\Delta_0^+\setminus A)\cup\delta^-$.
Since $F$ is non-split, the sphere $(\Delta_0^+\setminus A)\cup\delta^+$ bounds a ball in $S^3\setminus\operatorname{int}F$.
Then, $F\cup\Delta$ and $F\cup\Delta^-$ are ambient isotopic in $S^3\setminus\operatorname{int}F$.
On the other hand, $|L'|$ is less than $|L|$, where we put $L'=\operatorname{int}\Delta^-\cap\operatorname{int}\delta$.
This leads to a contradiction with the minimality of $|L|$.
Hence, $L=\emptyset$.
\par
We put $P_i=\delta_i\cup\Delta_i$ for any integer $i$ with $1\le i\le n$, where we note that $\Delta\cup\delta=P_1\sqcup\cdots\sqcup P_n$ is a disjoint union of spheres embedded in $S^3 \setminus\operatorname{int}F$.
Since $F$ is non-split, each sphere $P_i$ bounds a ball $B_i$ in $S^3\setminus\operatorname{int}F$; then $F\cap \operatorname{int}B_i =\emptyset$.
Let $i$ and $j$ be integers with $i \not=j$.
If $B_j \subset \operatorname{int}B_i$, it holds that $\partial\delta_j\subset P_j=\partial B_j\subset\operatorname{int}B_i$, and we have $F \cap \operatorname{int}B_i \not= \emptyset$, since $\partial\delta_j\subset F$. This leads to a contradiction.
Hence, we have $B_j\not\subset\operatorname{int}B_i$, that is, $B_i\cap B_j=\emptyset$;
therefore the balls $B_1$, \ldots, $B_n$ are pairwise disjoint.
\par
In each ball $B_i$, we deform $\Delta_i$ into $\delta_i$ by an isotopy of $S^3\setminus\operatorname{int}F$.
Therefore, the spatial surfaces $F\cup\delta$ and $F\cup\Delta$ are ambient isotopic in $S^3$, where we note that $F\cup\delta=S$.
\end{proof}

In Proposition~\ref{PROP:many_hole_reconstruct_unique}, if $S$ is split, the claim does not always hold because there is no information about a ``partition''  of $S^3$ by the closed components of $S$.

\begin{thm}\label{THM_spatial_surface_with/without_boundary}
Let $S$ (resp.~$S'$) be a non-split unoriented spatial surface.
Let $F$ (resp.~$F'$) be the non-split unoriented spatial surface with boundary that is obtained from $S$ (resp.~$S'$) by removing an open disk from every closed component.
Let $D$ (resp.~$D'$) be a diagram of $F$ (resp.~$F'$).
Then, the following conditions are equivalent.
\begin{enumerate}
\item[(a)] $S$ and $S'$ are ambient isotopic in $S^3$.
\item[(b)] $D$ and $D'$ are related by R0--R6 moves in $\mathbb{R}^2$.
\end{enumerate}
\end{thm}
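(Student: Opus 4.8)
The plan is to reduce the statement to the with-boundary case already settled in Theorem \ref{THM:main_non_oriented}, using Proposition \ref{PROP:many_hole_reconstruct_unique} to pass back and forth between a surface containing closed components and its punctured version. Throughout, I keep track of the distinguished boundary loops of $F$ (resp.~$F'$): namely the loops $\partial\delta_i$ of the disks removed from the closed components of $S$ (resp.~$S'$). Capping exactly these loops recovers $S$ (resp.~$S'$), and by Proposition \ref{PROP:many_hole_reconstruct_unique} this reconstruction is well defined up to ambient isotopy, since gluing any disks in $S^3\setminus\operatorname{int}F$ along those loops yields a surface ambient isotopic to $S$. Note also that $\delta$ lies in $S^3\setminus\operatorname{int}F$, because $\operatorname{int}\delta_i$ has been removed and $\partial\delta_i\subset\partial F$.

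For the implication (a) $\Rightarrow$ (b), I would start from an ambient isotopy $h$ of $S^3$ with $h(S)=S'$. Since $h$ carries closed components to closed components, $h(\delta)$ is a disjoint union of disks, one in the interior of each closed component of $S'$, matched to the $\delta'_j$ through the induced bijection of components. Because any two disks in a connected closed surface are related by an isotopy of that surface, and such an isotopy extends to an ambient isotopy of $S^3$ supported near the component, I can post-compose $h$ so that $h(\delta)=\delta'$. Then $h(F)=h(S\setminus\operatorname{int}\delta)=S'\setminus\operatorname{int}\delta'=F'$, so $F$ and $F'$ are ambient isotopic by an isotopy sending the distinguished loops of $F$ to those of $F'$. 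Theorem \ref{THM:main_non_oriented} then gives that $D$ and $D'$ are related by R0--R6 moves in $\mathbb{R}^2$.

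For (b) $\Rightarrow$ (a), Theorem \ref{THM:main_non_oriented} first furnishes an ambient isotopy $g$ of $S^3$ with $g(F)=F'$. Moreover $g$ is assembled from the local R0--R6 moves, each of which carries the boundary of the associated surface along continuously, so $g$ respects the natural bijection between the boundary components of $\operatorname{Sf}(D)$ and $\operatorname{Sf}(D')$ and in particular sends the distinguished loops of $F$ to those of $F'$. Writing $S=F\cup\delta$, I then have $g(S)=F'\cup g(\delta)$, where $g(\delta)$ is a disjoint union of disks in $S^3\setminus\operatorname{int}F'$ whose boundaries are precisely the distinguished loops of $F'$. Applying Proposition \ref{PROP:many_hole_reconstruct_unique} with $g(\delta)$ in the role of the capping disks shows that $F'\cup g(\delta)$ is ambient isotopic to $S'$; since $S$ is ambient isotopic to $g(S)$, I conclude that $S$ and $S'$ are ambient isotopic.

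The main obstacle is the bookkeeping of the distinguished loops: one must guarantee that the ambient isotopies produced in each direction send the punctures arising from closed components of one surface to those of the other, and not to some other boundary component. In the direction (a) $\Rightarrow$ (b) this is arranged explicitly, by adjusting $h$ on each closed component so that the removed disks correspond. In the direction (b) $\Rightarrow$ (a) it rests on the observation that the isotopy supplied by Theorem \ref{THM:main_non_oriented} is built from R0--R6 moves and therefore preserves the correspondence of boundary components, so that the distinguished loops are tracked through the whole sequence. Once this correspondence of loops is secured, Proposition \ref{PROP:many_hole_reconstruct_unique} supplies the remaining step, namely that capping the matched loops yields ambient isotopic closed-up surfaces.
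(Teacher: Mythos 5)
Your overall route is the same as the paper's: both directions reduce to Theorem \ref{THM:main_non_oriented} applied to the punctured surfaces $F$ and $F'$, with Proposition \ref{PROP:many_hole_reconstruct_unique} used to pass back to $S$ and $S'$. Your (a) $\Rightarrow$ (b) argument matches the paper's almost verbatim: compose the isotopy carrying $S$ to $S'$ with one moving the image of $\delta$ onto $\delta'$ inside $S'$ (the paper phrases this as ``$h_1(\delta)$ and $\delta'$ are ambient isotopic in $S'$''), conclude $F$ and $F'$ are ambient isotopic, and invoke Theorem \ref{THM:main_non_oriented}. That direction is fine.

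The gap is in (b) $\Rightarrow$ (a), at exactly the step you flag as the main obstacle. You claim that the isotopy $g$ with $g(F)=F'$ furnished by Theorem \ref{THM:main_non_oriented} ``respects the natural bijection between the boundary components \dots and in particular sends the distinguished loops of $F$ to those of $F'$.'' The first half is true, but the ``in particular'' does not follow: which boundary circles of $F'$ are of the form $\partial\delta'_j$ is data determined by $S'$, invisible in the diagram $D'$ and in the R0--R6 moves, so the induced correspondence of boundary components has no reason to carry $\partial\delta_i$ to some $\partial\delta'_j$. Concretely, take $S$ a standardly embedded closed torus and $S'$ a once-punctured standard torus regarded as a spatial surface with boundary; then $F=F'$ is a once-punctured torus and (b) holds trivially, yet the unique boundary circle of $F$ is a puncture loop while that of $F'$ is not, and capping $g(\partial\delta_1)$ recovers $S$, not $S'$. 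So the loop-matching you need can genuinely fail, and no argument from the moves alone can supply it; one needs either the hypothesis that every boundary circle of $F$ and of $F'$ is a puncture loop (e.g.\ $S$ and $S'$ closed, which is the case used later for the injectivity of $f_3$), or the distinguished loops recorded as part of the data. For what it is worth, the paper's own proof of this direction is a one-line appeal to Proposition \ref{PROP:many_hole_reconstruct_unique} that is silent on this point; your instinct to track the loops was the right one, but the justification offered is the step that fails.
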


\begin{proof}
Suppose that (a) is satisfied.
We show that (b) holds.
Let $\delta$ (resp.~$\delta'$) be a disjoint union of disks contained in $S$ (resp.~$S'$) such that $F=S \setminus \operatorname{int}(\delta)$ (resp.~$F'=S'\setminus\operatorname{int}(\delta')$).
Since $S$ and $S'$ are ambient isotopic in $S^3$,
we take an isotopy $\{h_t:S^3\to S^3\}_{t \in [0,1]}$ such that $h_0=\mathrm{id}|_{S^3}$ and $h_1(S)=S'$.
Then, $F$ and $S'\setminus\operatorname{int}h_1(\delta)$ are ambient isotopic in $S^3$.
On the other hand, since $h_1(\delta)$ and $\delta'$ are ambient isotopic in $S'$,
the spatial surfaces $S'\setminus\operatorname{int}h_1(\delta)$ and $F'$ are ambient isotopic in $S'$;
then $S'\setminus\operatorname{int}h_1(\delta)$ and $F'$ are also ambient isotopic in $S^3$.
Therefore, $F$ and $F'$ are ambient isotopic in $S^3$.
By Theorem~\ref{THM:main_non_oriented}, $D$ and $D'$ are related by R0--R6 moves in $\mathbb{R}^2$.
\par
Suppose that (b) is satisfied.
By Theorem~\ref{THM:main_non_oriented}, $F$ and $F'$ are ambient isotopic in $S^3$.
Since $F$ and $F'$ are non-split, $S$ and $S'$ are ambient isotopic in $S^3$ by Proposition~\ref{PROP:many_hole_reconstruct_unique}: (a) is satisfied.
\end{proof}
In the same manner, we have Theorem~\ref{THM_spatial_surface_with/without_boundary_2} below by using Theorem~\ref{THM:main_oriented} and Proposition~\ref{PROP:many_hole_reconstruct_unique}.
\begin{thm}\label{THM_spatial_surface_with/without_boundary_2}
Let $S$ (resp.~$S'$) be a non-split oriented spatial surface.
Let $F$ (resp.~$F'$) be the non-split oriented spatial surface with boundary that is obtained from $S$ (resp.~$S'$) by removing an open disk from every closed component.
Let $D$ (resp.~$D'$) be a diagram of $F$ (resp.~$F'$).
Then, the following conditions are equivalent.
\begin{enumerate}
\item[(a)] $S$ and $S'$ are ambient isotopic in $S^3$ including orientations.
\item[(b)] $D$ and $D'$ are related by R${\omega}$, R2--R3 and R5--R6 moves in $\mathbb R^2$.
\item[(c)] $D$ and $D'$ are related by R2--R3 and R5--R6 moves in $S^2$.
\end{enumerate}
\end{thm}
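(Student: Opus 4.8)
The plan is to run the argument of Theorem~\ref{THM_spatial_surface_with/without_boundary} essentially verbatim, substituting Theorem~\ref{THM:main_oriented} for Theorem~\ref{THM:main_non_oriented} and carrying orientations along at every step. The first observation is that $D$ and $D'$ are diagrams of the oriented spatial surfaces with boundary $F$ and $F'$, so Theorem~\ref{THM:main_oriented} applies to the pair $(D,D')$ directly. Conditions (b) and (c) of the present theorem are word for word conditions (b) and (c) of Theorem~\ref{THM:main_oriented}, whose condition (a) reads
\[
(\ast)\quad \text{$F$ and $F'$ are ambient isotopic in $S^3$ including orientations.}
\]
Hence Theorem~\ref{THM:main_oriented} already delivers (b) $\Leftrightarrow$ (c) $\Leftrightarrow$ $(\ast)$, and it only remains to prove that (a) is equivalent to $(\ast)$.

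For (a) $\Rightarrow$ $(\ast)$ I would imitate the forward direction of Theorem~\ref{THM_spatial_surface_with/without_boundary}. Writing $\delta$ (resp.~$\delta'$) for the disjoint union of removed disks, so that $F=S\setminus\operatorname{int}\delta$ and $F'=S'\setminus\operatorname{int}\delta'$, I would fix an orientation-preserving ambient isotopy $\{h_t\}_{t\in[0,1]}$ of $S^3$ with $h_0=\mathrm{id}$ and $h_1(S)=S'$ including orientations. Then $F$ and $S'\setminus\operatorname{int}h_1(\delta)$ are ambient isotopic including orientations; and since $h_1(\delta)$ and $\delta'$ are disjoint unions of disks with one disk in each closed component of the oriented surface $S'$, they are ambient isotopic in $S'$, whence $S'\setminus\operatorname{int}h_1(\delta)$ and $F'$ are ambient isotopic in $S'$, and therefore in $S^3$, including orientations. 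Concatenating these isotopies gives $(\ast)$.

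For $(\ast)$ $\Rightarrow$ (a) I would use the oriented form of Proposition~\ref{PROP:many_hole_reconstruct_unique}. Cap each boundary circle of $F$ produced by removing $\delta$ with a disk in $S^3\setminus\operatorname{int}F$, orienting each capping disk by the unique orientation that extends the orientation of $F$ across its boundary; the resulting $F\cup\Delta$ then carries a well-defined orientation, and Proposition~\ref{PROP:many_hole_reconstruct_unique} makes it ambient isotopic to $S$ in $S^3$. Since the cut-and-paste isotopy there is constructed inside $S^3\setminus\operatorname{int}F$, it may be taken to keep $F$ fixed, and as the orientation of $F$ is exactly the restriction of the orientation of $S=F\cup\delta$, this isotopy is orientation-preserving; thus $S$ and $F\cup\Delta$ are ambient isotopic including orientations, and likewise $S'$ and $F'\cup\Delta'$. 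An orientation-preserving ambient isotopy realizing $(\ast)$ then carries $F\cup\Delta$ to $F'\cup\Delta'$ including orientations, yielding (a).

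I expect the only genuine obstacle to be the orientation bookkeeping: one must check that excising an open disk from an oriented closed component and later recapping it is orientation-coherent. This reduces to the fact that a capping disk admits a unique orientation compatible with its induced boundary orientation, so orientations transport unambiguously through the cut-and-paste of Proposition~\ref{PROP:many_hole_reconstruct_unique}; the non-split hypothesis is precisely what guarantees that each sphere $\delta_i\cup\Delta_i$ bounds a ball and hence makes that reconstruction available.
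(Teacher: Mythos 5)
Your proposal is correct and follows exactly the route the paper intends: the paper gives no separate written proof of Theorem~\ref{THM_spatial_surface_with/without_boundary_2}, stating only that it follows ``in the same manner'' as Theorem~\ref{THM_spatial_surface_with/without_boundary} from Theorem~\ref{THM:main_oriented} and Proposition~\ref{PROP:many_hole_reconstruct_unique}, which is precisely the substitution you carry out. Your extra care about the orientation bookkeeping (unique compatible orientation on the capping disks, and the cut-and-paste isotopy being supported in $S^3\setminus\operatorname{int}F$) is a sound filling-in of details the paper leaves implicit.
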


Next, we explain relations among knots, handlebody-knots and spatial surfaces.
For a manifold $X$ embedded in $S^3$, we denote by $[X]$ the ambient isotopy class of $X$ in $S^3$.
We put
\begin{align*}
\mathcal K&=\{[K]\mid\text{$K$ is a non-split link}\},\\
\mathcal H&=\{[H]\mid\text{$H$ is a non-split handlebody-link}\},\\
\mathcal {S}_{\rm cl}&=\{[S]\mid\text{$S$ is a non-split spatial closed surface}\},\\
\mathcal F_{\rm or}&=\{[F]\mid\text{$F$ is a non-split orientable spatial surface with boundary}\},
\end{align*}
where we note that a knot is a kind of non-split link, and a handlebody-knot is kind of non-split handlebody-link.
The following maps are injective:
\begin{align*}
f_1:\mathcal K\to\mathcal H;[K]\mapsto[N_K], & &
f_2:\mathcal H\to\mathcal S_{\rm cl};[H]\mapsto[\partial H], & &
f_3:\mathcal S_{\rm cl}\to\mathcal F_{\rm or};[S]\mapsto[F_S],
\end{align*}
where $N_K$ means a regular neighborhood of $K$ in $S^3$, and $F_S$ means a spatial surface with boundary obtained from $S$ by removing the interior of a disk in each closed component of $S$.
Injectivity of $f_3$ follows from Proposition~\ref{PROP:many_hole_reconstruct_unique}.
\par
By injectivity of the above maps, we have new presentation of non-split links and non-split handlebody-links
by using spatial surface diagrams, see Fig.~\ref{FIG:knot_to_application}.
This suggests a new approach to studying a knot, link, handlebody-knot and handlebody-link in a framework of spatial surfaces.
		\begin{figure}[htbp]\centerline{
		\includegraphics{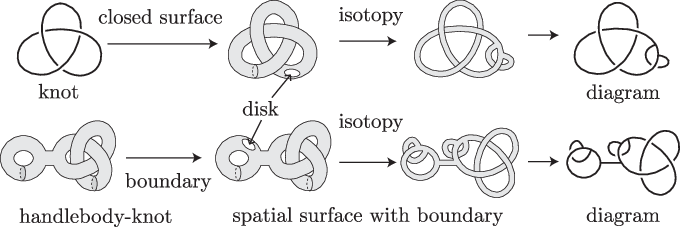}}
		\caption{Presentation of knots and handlebody-knots by using spatial surface diagrams.}\label{FIG:knot_to_application}
		\end{figure}

\begin{rmk}{\rm
The map $f_2$ above is not surjective.
There are infinitely many spatial closed surfaces that bounds no handlebody-knots up to ambient isotopy.
The spatial closed surface $S$ of Fig.~\ref{FIG:handlebody_not_bound} bounds no handlebodies:
each fundamental group of connected components of $S^3\setminus S$ is not free (cf.~\cite{homma1954}).
}\end{rmk}
		\begin{figure}[htbp]\centerline{
		\includegraphics{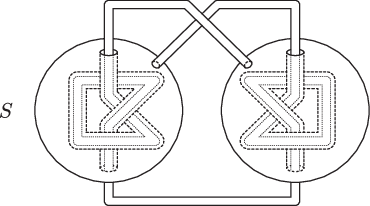}}
		\caption{A spatial closed surface with genus 2 that bounds no handlebody-knots.}
		\label{FIG:handlebody_not_bound}
		\end{figure}

For any component of a diagram $D$ of a link, which has no trivalent vertices, we attach an ear-like edge as illustrated in Fig.~\ref{FIG:ear}.
The diagram, which has trivalent vertices, is denoted by $D_{\varphi}$.
\par
By using Proposition~\ref{PROP:many_hole_reconstruct_unique}, we have the following Theorem.

\begin{thm}\label{THM_knot_new_Reidemeister_move}
Let $D$ and $D'$ be diagrams of non-split unoriented links $K$ and $K'$, respectively.
Then, the following conditions are equivalent.
\begin{itemize}
\item[(a)] $K$ and $K'$ are ambient isotopic in $S^3$.
\item[(b)] $D_{\varphi}$ and $D_{\varphi}'$ are related by R2--R3 and R5--R6 moves.
\end{itemize}
\end{thm}
		\begin{figure}[htbp]\centerline{
		\includegraphics{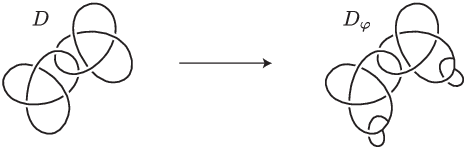}}
		\caption{A diagram $D$ of a link and $D_\varphi$.}
		\label{FIG:ear}
		\end{figure}

\section*{Acknowledgements}
The author would like to thank the referee, Atsushi ISHII, Koki TANIYAMA and Tomo MURAO for valuable discussions and suggestions.

\end{document}